\newtheorem{theorem}{Theorem}[section]
\newtheorem{them}{Theorem}
\newtheorem{lemma}[theorem]{Lemma}
\newtheorem{proposition}[theorem]{Proposition}
\newtheorem{corollary}[theorem]{Corollary}
\theoremstyle{definition}
\newtheorem{definition}[theorem]{Definition}
\newtheorem{setup}[theorem]{Setup}
\newtheorem{remark}{Remark}
\DeclareMathOperator{\Ext}{Ext}
\DeclareMathOperator{\GExt}{G-Ext}
\DeclareMathOperator{\Hom}{Hom}
\DeclareMathOperator{\cok}{cok}
\DeclareMathOperator{\im}{Im}
\newcommand{\cat}[1]{\mathcal{#1}}           
\newcommand{\tensor}{\otimes}
\newcommand{\class}[1]{\mathcal{#1}}   
\newcommand{\N}{\mathbb{N}}
\newcommand{\Z}{\mathbb{Z}}
\newcommand{\mathcolon}{\colon\,} 
\newcommand{\ch}{\textnormal{Ch}(R)}
\newcommand{\cha}[1]{\textnormal{Ch}(\mathcal{#1})}
\newcommand{\rmod}{R\text{-Mod}}
\newcommand{\qcor}{\textnormal{Qco}(R)}
\newcommand{\rightperp}[1]{#1^{\perp}}
\newcommand{\leftperp}[1]{{}^\perp #1}
\newcommand{\homcomplex}{\mathit{Hom}}
\newcommand{\Ho}[1]{{\textnormal{Ho}(#1)}}
\begin{document}

\title{The derived category with respect to a generator}

\author{James Gillespie}
\address{Ramapo College of New Jersey \\
         School of Theoretical and Applied Science \\
         505 Ramapo Valley Road \\
         Mahwah, NJ 07430}
\email[Jim Gillespie]{jgillesp@ramapo.edu}
\urladdr{http://pages.ramapo.edu/~jgillesp/}

\date{\today}

\begin{abstract}
Let $\cat{G}$ be any Grothendieck category along with a choice of generator $G$, or equivalently a generating set $\{G_i\}$. We introduce the derived category $\class{D}(G)$, which kills all $G$-acyclic complexes, by putting a suitable model structure on the category $\cha{G}$ of chain complexes. It follows that the category $\class{D}(G)$ is always a well-generated triangulated category. It is compactly generated whenever the generating set $\{G_i\}$ has each $G_i$ finitely presented, and in this case we show that two recollement situations hold. The first is when passing from the homotopy category $K(\cat{G})$ to $\class{D}(G)$. The second is a $G$-derived analog of a recollement due to Krause. We describe several examples ranging from pure and clean derived categories to quasi-coherent sheaves on the projective line.
\end{abstract}

\maketitle

\section{Introduction}\label{sec-intro}

This paper is about doing homological algebra with respect to a given generator in a Grothendieck category.
Let $R$ be a ring and $\ch$ denote the category of chain complexes of (left) $R$-modules. Recall that the usual derived category $\class{D}(R)$ is defined by first constructing the homotopy category $K(R)$ of unbounded chain complexes of $R$-modules, and then formally inverting the homology isomorphisms. $R$ itself, when viewed as an $R$-module is a generator for $\rmod$. But when $R$ is viewed as a chain complex in degree zero, it is a weak generator for $\class{D}(R)$ which essentially means it can detect exactness. Note that for a chain complex $X$, the standard isomorphism $\Hom_R(R,X) \cong X$ allows one to view the homology of $X$ as $H_n[\Hom_R(R,X)]$. Similarly, homology isomorphisms can be viewed as those chain maps $X \xrightarrow{} Y$ in $\ch$ which become homology isomorphisms after applying $\Hom_R(R,-)$.

But sometimes the derived category $\class{D}(R)$ is not the right home for the homological algebra one is interested in. For example, there is the pure derived category of a ring $R$ introduced in~\cite{hovey-christensen-relative hom alg}, and recently extended to any locally presented additive category in~\cite{krause-adjoints in homotopy cats}. Here if we take $G = \oplus G_i$ where the $G_i$ range through a set of isomorphism representatives for all finitely presented objects, then a complex $X$ is \emph{pure acyclic} if and only if $H_n[\Hom(G,X)]$ vanishes for all $n$. Similarly, isomorphisms in the pure derived category are those chain maps $X \xrightarrow{} Y$ which become homology isomorphisms after applying $\Hom(G,-) = \prod \Hom(G_i,-)$. So we are essentially doing homological algebra with respect to the generator $G$.

The most important categories we encounter in homological algebra are the Grothendieck categories, which recall are the abelian categories having exact direct limits and a generator $G$. A generator $G$ is equivalent to a generating set $\{G_i\}$ where $G = \oplus G_i$. This paper starts by showing that given any Grothendieck category $\cat{G}$ and a fixed choice of generator $G = \oplus G_i$, we can define the derived category $\class{D}(G)$. This category is obtained by inverting the \emph{$G$-homology isomorphisms}, which are the chain maps $X \rightarrow Y$ in $\cha{G}$ such that $\Hom_{\cat{G}}(G,X) \xrightarrow{} \Hom_{\cat{G}}(G,Y)$ is a homology isomorphism.  Said another way, this is the category obtained from $\cha{G}$ by forcing the \emph{$G$-acyclic complexes}, which are those complexes $X$ for which $\Hom_{\cat{G}}(G,X)$ is exact, to be 0.  To do this, we begin by showing that the generator $G = \oplus G_i$ determines a Quillen exact structure on $\cat{G}$, which as we prove in Appendix~\ref{appendix-proper classes}, is equivalent to a proper class of short exact sequences in the sense of~\cite{homology}. The short exact sequences here are precisely the usual short exact sequences which remain exact after applying $\Hom_{\cat{G}}(G,-)$. We call them \emph{$G$-exact sequences} and we denote this exact structure by $\cat{G}_G$. It becomes clear that we should define the $G$-derived category $\class{D}(G)$ to be $\class{D}(\cat{G}_G)$, the derived category with respect to the exact category $\cat{G}_G$, in the sense of~\cite{neeman-exact category} and~\cite{keller-derived cats}. 

But to get a deeper understanding of the $G$-derived category one would like to have a Quillen \emph{model} structure on $\cha{G}$ whose trivial objects are the $G$-acyclic complexes. In this case the associated homotopy category would coincide with $\class{D}(G)$. Such a model structure would first of all provide a convenient description of the morphism sets. But more importantly the theory of cofibrantly generated and monoidal model categories could be used to study $\class{D}(G)$. 
We in fact are able to build not just one, but two cofibrantly generated models on $\cha{G}$ whose trivial objects are the $G$-acyclic complexes. The first is a generalization of the usual projective model structure on $\ch$ while the second is a generalization of the usual injective model structure on $\ch$. See~\cite{hovey-model-categories} for details on these model structures.

To summarize, we use Hovey's correspondence between cotorsion pairs and abelian model structures to obtain the following result. 

\begin{them}[Models for $G$-derived categories]\label{them-A1}
Let $\cat{G}$ be any Grothendieck category with a generator $G = \oplus G_i$. 
\begin{enumerate}
\item There is a model structure on $\cha{G}$ which we call the \textbf{$\boldsymbol{G}$-projective model structure} whose trivial objects are the $G$-acyclic complexes. We call the associated homotopy category the \textbf{$\boldsymbol{G}$-derived category}, and denote it by $\class{D}(G)$. It is always a well generated triangulated category. 

\item If each $G_i$ is finitely presented then $\class{D}(G)$ is compactly generated. In this case we also have a dual model structure on $\cha{G}$ which we call the \textbf{$\boldsymbol{G}$-injective model structure}.

\item For given objects $A,B \in \class{G}$ we have
$\class{D}(G)(A,\Sigma^nB) = \GExt^n_{\cat{G}}(A,B)$ where $\GExt^n_{\cat{G}}(A,B)$ denotes the group of (equivalence classes of) $n$-fold $G$-exact sequences $B \rightarrowtail X_1 \rightarrow \cdots \rightarrow X_n  \twoheadrightarrow A$. $G$-projective resolutions (resp. $G$-injective coresolutions) provide cofibrant replacements in the $G$-projective model structure (resp. fibrant replacements in the $G$-injective model structure) and allow for computation of $\GExt^n_{\cat{G}}(A,B)$ in the usual manner.
\end{enumerate}
\end{them}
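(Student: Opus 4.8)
The plan is to obtain all three parts by producing two complete cotorsion pairs on $\cha{G}$ from a single cotorsion pair on $\cat{G}_G$, and then invoking Hovey's correspondence between cotorsion pairs and abelian model structures. First I would establish, using the exact structure $\cat{G}_G$ from the earlier part of the paper, that there is a small set $\class{S}$ of $G$-exact sequences which cogenerates a complete cotorsion pair $(\class{GP}, \class{GP}^\perp)$ on $\cat{G}_G$, where $\class{GP}$ is the class of $G$-projective objects, i.e. the direct summands of transfinite $G$-exact extensions built from the $G_i$'s (concretely the ``$G$-projectives'' are retracts of coproducts of the $G_i$). Completeness here follows from the small object argument applied inside the exact category $\cat{G}_G$; this uses that $\cat{G}$ is Grothendieck, so that $\cat{G}_G$ is efficient/locally presentable enough to run the argument. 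I would cite the exact-category version of Hovey's theorem (as in Gillespie's earlier work on exact model structures) rather than reprove it.

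Next, following the standard lifting of a cotorsion pair on an abelian/exact category to its category of chain complexes, I would construct two induced cotorsion pairs on $\cha{G}$. For part (1), the $G$-projective model structure, the cofibrant objects are the complexes of $G$-projectives that are also cofibrant in the appropriate sense (``dg-$G$-projective'' complexes), the trivial objects are exactly the $G$-acyclic complexes, and the fibrant objects are everything; the two cotorsion pairs are $(\dgclass{GP}, \class{W})$ and $(\class{GP}\text{-complexes} \cap \class{W}, \class{A})$ where $\class{W}$ denotes the $G$-acyclic complexes and $\class{A}$ is its right orthogonal. The key points to check are: $\class{W}$ is a thick class closed under the relevant operations; the two cotorsion pairs are compatible in Hovey's sense, i.e. $\dgclass{GP} \cap \class{W}$ equals the class of exact complexes with $G$-projective components; and both pairs are complete, which again reduces to a set-cogenerated small object argument on $\cha{G}$. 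That the resulting homotopy category is well generated follows because the model structure is cofibrantly generated by a set, so $\class{D}(G)$ is a localization of a well generated (indeed compactly generated) triangulated category by a set of morphisms, hence well generated by Neeman's theory.

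For part (2), I would observe that when each $G_i$ is finitely presented, the functors $\Hom_{\cat{G}}(G_i,-)$ commute with direct limits, so the class of $G$-acyclic complexes is closed under direct limits; this makes the localizing subcategory it generates well behaved and, combined with the fact that the $\Sigma^n G_i$ (viewed as complexes concentrated in a single degree with their $G$-projective replacements) form a set of compact generators of $\class{D}(G)$, yields compact generation. Dually, finite presentation gives enough structure to run the injective-style small object argument: there is a cogenerated cotorsion pair on $\cat{G}_G$ whose right class consists of ``$G$-injective'' objects — here one uses that $\cat{G}_G$ has enough injectives in its exact structure, which follows because $\cat{G}$ is Grothendieck and the $G$-exact sequences with finitely presented ``test objects'' are detected by a set — and lifting this to $\cha{G}$ gives the $G$-injective model structure by the same machinery as in part (1).

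Finally, part (3) is a formal consequence of the model structure together with the identification of its mapping spaces. The plan is: cofibrant replacement of $A$ (placed in degree $0$) in the $G$-projective model structure is exactly a $G$-projective resolution $P_\bullet \twoheadrightarrow A$ — this is where one checks that a bounded-above complex of $G$-projectives which is $G$-acyclic augmented over $A$ is cofibrant, using the description of $\dgclass{GP}$ — and then $\class{D}(G)(A,\Sigma^n B) = K(\cat{G})(P_\bullet, \Sigma^n B) = H_{-n}\bigl(\Hom_{\cat{G}}(P_\bullet, B)\bigr)$, which by the standard Yoneda/$\Ext$ dictionary in an exact category (the exact category being $\cat{G}_G$, and using that $G$-projective resolutions compute $\GExt$) equals $\GExt^n_{\cat{G}}(A,B)$, the group of equivalence classes of $n$-fold $G$-exact sequences $B \rightarrowtail X_1 \rightarrow \cdots \rightarrow X_n \twoheadrightarrow A$. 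The dual statement with the $G$-injective model structure and $G$-injective coresolutions is symmetric.

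I expect the main obstacle to be completeness of the lifted cotorsion pairs on $\cha{G}$ — specifically, verifying that the generating set of $G$-exact sequences on $\cat{G}_G$ genuinely produces, via the small object argument adapted to this exact structure, \emph{enough} $G$-projective (resp. $G$-injective) resolutions at the level of complexes, and that the pair $(\dgclass{GP},\class{W})$ is Hovey-compatible with $(\exclass{GP},\class{A})$, i.e. that the subtle intersection condition $\dgclass{GP}\cap\class{W} = \exclass{GP}$ holds. This is the technical heart; the well-generation and compact-generation statements, and part (3), are then comparatively formal.
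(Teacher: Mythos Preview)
Your plan is broadly aligned with the paper's---Hovey's correspondence, set-cogenerated cotorsion pairs, the small object argument---but there is one genuine subtlety in part~(1) that your phrasing glosses over and that the paper handles by a different route. You propose to run the small object argument ``inside the exact category $\cat{G}_G$'' and then lift the resulting cotorsion pair to complexes. For this to work directly (e.g.\ via \cite[Theorem~6.5]{hovey} or the standard lifting machinery) one needs transfinite compositions of admissible monomorphisms to remain admissible, i.e.\ that transfinite compositions of $G$-monomorphisms are again $G$-monomorphisms. The paper proves this (Corollary~\ref{cor-transfinite compositions and extensions of G-monos}) \emph{only} under the hypothesis that each $G_i$ is finitely presented; in the general Grothendieck case it may fail, and the paper explicitly remarks elsewhere that the $G$-exact (or $\lambda$-pure) structure need not be of Grothendieck type in \v{S}\v{t}ov\'{\i}\v{c}ek's sense. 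So as written, your argument for part~(1) has a gap precisely in the case that distinguishes it from part~(2).

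The paper's fix is to bypass the exact structure entirely at this step: it works in the ordinary Grothendieck category $\cha{G}$ with the explicit sets $I = \{0 \rightarrowtail D^n(G_i)\} \cup \{S^{n-1}(G_i) \rightarrowtail D^n(G_i)\}$ and $J = \{0 \rightarrowtail D^n(G_i)\}$, runs the usual small object argument there, and then observes that because every map in $I$ is a degreewise split monomorphism with $G$-projective cokernel, every map in $I\text{-cell}$ is a \emph{degreewise split} monomorphism---hence automatically a $G$-monomorphism regardless of any closure properties of $\cat{G}_G$. This is what buys part~(1) without finite presentability; the paper itself notes that in the finitely presented case the argument collapses to exactly your plan. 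For parts~(2) and~(3) your outline matches the paper closely; be aware that the injective side is substantially more delicate than your sketch indicates, requiring a carefully chosen regular cardinal $\gamma$ (Setup~\ref{setup-gamma}) and a filtration argument (Lemma~\ref{lemma-finding G-exact subcomplexes}, Proposition~\ref{prop-transfinite extensions from purity}) showing every $G$-acyclic complex is a transfinite $G$-extension of $\gamma$-presented ones.
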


It should be pointed out that a careful reading of~\cite{hovey-christensen-relative hom alg} reveals that one can deduce the existence of the $G$-projective model structure above from their general Theorem~2.2. But while that theorem is more broad, our approach is different, and our results are very specific. To illuminate the analogy to the usual projective model structure on $\ch$, where $R$ is a ring, we give complete descriptions of the cofibrations, fibrations, and weak equivalences in the $G$-projective model structure. For example, the cofibrant objects are precisely the complexes $P$ for which each $P_n$ is a \emph{$G$-projective} (direct summand of a coproduct of the $G_i$) and such that any chain map $P \xrightarrow{} X$, with target $X$ $G$-acyclic, is null homotopic. The projective model structure is studied in Section~\ref{sec-the G-derived category of a locally finitely presented Grothendieck category}. In particular, see Theorem~\ref{them-projective model for G-derived} and Corollary~\ref{cor-projective model for G-derived} also Subsection~\ref{subsec-GExt computation}. 

On the other hand, constructing the dual $G$-injective model structure is far more technical than constructing the $G$-projective model. To do so we use the theory of purity from~\cite{adamek-rosicky}. In particular, the assumption that the generating set $\{G_i\}$ satisfies that each $G_i$ is finitely presented is equivalent to saying that $\cat{G}$ is a locally finitely presented Grothendieck category. This is precisely the setting in which a nice theory of purity holds. See~\cite{adamek-rosicky},\cite{crawley-boevey}, and Appendix~\ref{appendix-lambda pure}. We emphasize that this still includes the most important categories we encounter in homological algebra. For instance, the category of quasi-coherent sheaves over a quasi-compact and quasi-separated scheme is a locally finitely presented Grothendieck category by~\cite[Proposition~3.1]{garkusha-quasi-coherent}. The injective model structure is studied in Section~\ref{sec-injective model struc for G-derived}. In particular, see Subsection~\ref{subsec-inj model} with Theorem~\ref{them-injective cotorsion pair for G-derived} being the dual of Theorem~\ref{them-projective model for G-derived} and Corollary~\ref{cor-injective model for G-derived} being the dual of Corollary~\ref{cor-projective model for G-derived}. 

In Section~\ref{sec-recollement of Krause for G-derived} we go on to show that two recollement situations hold whenever we assume the $G_i$ are finitely presented. 
Note that the $G$-projective and $G$-injective model
structures are ``balanced'' in the sense that they share the same trivial objects. This is essentially the reason behind the following theorem. It is a $G$-derived version of a well known fact about $\class{D}(R)$.

\begin{them}[Verdier localization recollement for $G$-derived categories]\label{them-A}
Suppose $\cat{G}$ is a Grothendieck category and that $G = \oplus G_i$ is a generator with each $G_i$ finitely presented. Let $\class{D}(G)$ denote the $G$-derived category. Let $K(\cat{G})$ denote the homotopy category of all chain complexes and let $K_{G\text{-ac}}(\cat{G})$ denote the subcategory of all $G$-acyclic complexes. Then we have a recollement of triangulated categories:
\[
\xy
(-28,0)*+{K_{G\textnormal{-ac}}(\cat{G})};
(0,0)*+{K(\cat{G})};
(28,0)*+{\class{D}(G)};
{(-19,0) \ar (-8,0)};
{(-8,0) \ar@<0.5em> (-19,0)};
{(-8,0) \ar@<-0.5em> (-19,0)};
{(8,0) \ar (19,0)};
{(19,0) \ar@<0.5em> (8,0)};
{(19,0) \ar@<-0.5em> (8,0)};
\endxy
.\]
\end{them}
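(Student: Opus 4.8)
The plan is to realise $\class{D}(G)$ as a Verdier quotient of $K(\cat{G})$ and then feed the two model structures of Theorem~\ref{them-A1} into the standard principle that a Verdier quotient whose quotient functor admits a left \emph{and} a right adjoint underlies a recollement; this is the triangulated shadow of the two model structures being ``balanced'', exactly as in the classical case $\cat{G} = \rmod$, $G = R$, $\class{D}(G) = \class{D}(R)$.

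First I would identify $\class{D}(G)$ with $K(\cat{G})/K_{G\textnormal{-ac}}(\cat{G})$, with $j^{*}$ the canonical quotient. Because $\Hom_{\cat{G}}(G,-)$ is additive it carries chain homotopies to chain homotopies and mapping cone sequences to mapping cone sequences; hence every chain homotopy equivalence is a $G$-homology isomorphism, so the localisation $\cha{G} \to \class{D}(G)$ — i.e.\ passage to the homotopy category of the $G$-projective model structure — factors through $K(\cat{G})$, and $\class{D}(G)$ is the Verdier quotient of $K(\cat{G})$ by the thick subcategory generated by the cones of $G$-homology isomorphisms. That subcategory is $K_{G\textnormal{-ac}}(\cat{G})$: the cone of a $G$-homology isomorphism is $G$-acyclic, conversely $0 \to A$ is a $G$-homology isomorphism exactly when $A$ is $G$-acyclic, and $K_{G\textnormal{-ac}}(\cat{G})$ is already thick — in fact localising and colocalising, as $\Hom_{\cat{G}}(G,-)$ preserves cones, shifts, retracts, products, and, since each $G_{i}$ is finitely presented, coproducts. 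With $i_{*}$ the inclusion of $K_{G\textnormal{-ac}}(\cat{G}) = \ker j^{*}$ this already gives $j^{*} i_{*} = 0$.

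Next I would extract the two adjoints of $j^{*}$. By the general theory of (abelian) model categories \cite{hovey-model-categories}, the homotopy category of the $G$-projective model structure is computed by the $G$-projective complexes with chain homotopy classes of maps, and since a cofibrant replacement $QX \to X$ is a $G$-homology isomorphism the composite of the inclusion of these complexes into $K(\cat{G})$ with $j^{*}$ is a triangle equivalence onto $\class{D}(G)$. Transporting this inclusion along the equivalence yields a fully faithful $j_{!} \colon \class{D}(G) \to K(\cat{G})$ — ``take a $G$-projective resolution'' — with essential image $\leftperp{K_{G\textnormal{-ac}}(\cat{G})}$; and $j_{!} \dashv j^{*}$, since for a $G$-projective complex $P$ the quotient functor induces $\Hom_{K(\cat{G})}(P,X) \xrightarrow{\sim} \Hom_{\class{D}(G)}(j^{*}P, j^{*}X)$ (a $G$-projective complex has no nonzero maps in $K(\cat{G})$ into a $G$-acyclic complex). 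The counit triangles $j_{!}j^{*}X \to X \to i_{*}i^{*}X \to$ are the cofibrant-replacement triangles and equip $i_{*}$ with a left adjoint $i^{*}$, $i^{*}j_{!} = 0$. Dually — and this is where the hypothesis that the $G_{i}$ are finitely presented enters, via the existence of the $G$-injective model structure of Theorem~\ref{them-A1}(2) — the $G$-injective complexes with chain homotopy compute $\class{D}(G)$, a fibrant replacement is a $G$-homology isomorphism, and one obtains a fully faithful right adjoint $j_{*} \colon \class{D}(G) \to K(\cat{G})$ — ``take a $G$-injective coresolution'' — with essential image $\rightperp{K_{G\textnormal{-ac}}(\cat{G})}$, together with the fibrant-replacement triangles $i_{*}i^{!}X \to X \to j_{*}j^{*}X \to$ which equip $i_{*}$ with a right adjoint $i^{!}$, $i^{!}j_{*} = 0$.

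Finally I would simply assemble: $(i_{*}, j^{*})$ is a Verdier quotient, $j_{!} \dashv j^{*} \dashv j_{*}$ with $j_{!}, j_{*}$ fully faithful, $i^{*} \dashv i_{*} \dashv i^{!}$, the orthogonality relations $j^{*}i_{*} = i^{*}j_{!} = i^{!}j_{*} = 0$ hold, and both families of triangles are in place — which is exactly the data of the displayed recollement. The hard part is not any single computation but the requirement that the two model structures be available \emph{simultaneously}: the $G$-injective one, which supplies $j_{*}$, is the genuinely technical construction and exists only because the $G_{i}$ are finitely presented, but it is already in hand from Theorem~\ref{them-A1}. Granting that, the residual work is bookkeeping — checking that the $G$-projective (resp.\ $G$-injective) complexes really compute $\class{D}(G)$ inside $K(\cat{G})$, and that the resolution constructions descend to honest triangulated adjoints rather than mere functorial replacements — while $\ker j^{*} = K_{G\textnormal{-ac}}(\cat{G})$ and the closure properties of $K_{G\textnormal{-ac}}(\cat{G})$ follow routinely from additivity of $\Hom_{\cat{G}}(G,-)$ (and finite presentability for coproducts).
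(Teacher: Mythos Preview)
Your proposal is correct and rests on the same core idea as the paper: the $G$-projective model structure furnishes the left adjoint $j_!$ to the quotient $j^*$ (and hence the left adjoint $i^*$ to the inclusion), while the $G$-injective model structure---available precisely because the $G_i$ are finitely presented---furnishes the right adjoint $j_*$ (and hence $i^!$), and the two together assemble into the recollement.

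The only notable difference is in packaging. You work directly at the level of triangulated categories: identify $\class{D}(G)$ as the Verdier quotient, check by hand that semi-$G$-projective complexes are left orthogonal to $K_{G\textnormal{-ac}}(\cat{G})$ (and dually for semi-$G$-injectives), and verify the adjunction and triangle axioms explicitly. The paper instead transfers both model structures to the degreewise-split exact category $\cha{G}_{dw}$ via~\cite[Theorem~6.3]{gillespie-recoll2}, obtaining cotorsion pairs $(K\class{P},\class{W})$ and $(\class{W},K\class{I})$ there, and then invokes the general machinery of \emph{localizing cotorsion triples} \cite[Corollary~4.5]{gillespie-recoll2} to produce the recollement in one stroke. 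Your route is more self-contained and makes the adjoints concrete (``take a $G$-projective/$G$-injective resolution''); the paper's route is shorter on the page but imports the bookkeeping you spell out from the companion paper. One small terminological caution: what you call ``$G$-projective complex'' in the orthogonality step is what the paper calls \emph{semi}-$G$-projective (the cofibrant objects), not the categorically projective complexes of Lemma~\ref{lemma-complexes that are projective}.
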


\begin{proof}
See Theorem~\ref{them-verdier recollement for G-derived} where the functors are described as well.
\end{proof}

The existence of the injective model structure will also lead us to the following Theorem, which is a $G$-version of Krause's result from~\cite{krause-stable derived cat of a Noetherian scheme}. Here we call an object \emph{$G$-injective} if it is injective with respect to the $G$-exact sequences already mentioned above.

\begin{them}[Krause's recollement for $G$-derived categories]\label{them-B}

Let $\cat{G}$ be a Grothen-dieck category and let $G = \oplus G_i$ be a generator with each $G_i$ finitely presented. Let $\class{D}(G)$ denote the $G$-derived category. Let $K_G(Inj)$ denote the homotopy category of all complexes of $G$-injectives. Let $K_{G\textnormal{-ac}}(Inj)$ denote the homotopy category of all $G$-acyclic complexes of $G$-injectives. Then there is a recollement
\[
\xy
(-30,0)*+{K_{G\textnormal{-ac}}(Inj)};
(0,0)*+{K_G(Inj)};
(26,0)*+{\class{D}(G)};
{(-19,0) \ar (-10,0)};
{(-10,0) \ar@<0.5em> (-19,0)};
{(-10,0) \ar@<-0.5em> (-19,0)};
{(10,0) \ar (19,0)};
{(19,0) \ar@<0.5em> (10,0)};
{(19,0) \ar@<-0.5em> (10,0)};
\endxy
.\]
\end{them}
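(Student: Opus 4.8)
The plan is to mimic Krause's original argument from \cite{krause-stable derived cat of a Noetherian scheme}, transported to the $G$-exact setting, using the $G$-injective model structure from Theorem~\ref{them-A1}(2) as the engine that makes everything work. The recollement will be built around the homotopy category $K_G(Inj)$ of complexes of $G$-injective objects, with $K_{G\textnormal{-ac}}(Inj)$ as the kernel of the localization functor onto $\class{D}(G)$. The three functors on each side come from: (i) the inclusion $K_{G\textnormal{-ac}}(Inj) \hookrightarrow K_G(Inj)$ together with its left and right adjoints, and (ii) the canonical functor $K_G(Inj) \to \class{D}(G)$ together with its left and right adjoints.

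First I would establish that $K_{G\textnormal{-ac}}(Inj)$ is a compactly generated triangulated category, and that the inclusion into $K_G(Inj)$ preserves coproducts; this is where the hypothesis that each $G_i$ is finitely presented (equivalently, $\cat{G}$ locally finitely presented) is essential, since it guarantees that $G$-acyclicity is detected by a set of compact test objects and is closed under coproducts in $\cha{G}$. By Brown representability the inclusion then has a right adjoint, and one checks $K_{G\textnormal{-ac}}(Inj)$ is also closed under products of complexes of $G$-injectives, giving the left adjoint. Second, I would show the composite $K_G(Inj) \to K(\cat{G}_G) \to \class{D}(G)$ (quotient by $G$-acyclics) identifies $\class{D}(G)$ with the Verdier quotient $K_G(Inj)/K_{G\textnormal{-ac}}(Inj)$: fully faithfulness and essential surjectivity here are exactly the statement that every object of $\cha{G}$ has a fibrant replacement by a complex of $G$-injectives in the $G$-injective model structure, and that $G$-homology isomorphisms between fibrant objects are chain homotopy equivalences. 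The left adjoint to $K_G(Inj) \to \class{D}(G)$ is $G$-injective coresolution (fibrant replacement); the right adjoint requires a separate Bousfield localization argument on $K_G(Inj)$, again available because this category is compactly generated.

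Concretely, the key steps in order are: (1) prove $K_G(Inj)$ is compactly generated, using that $\cha{G_G}$ has enough $G$-injectives and that the class of $G$-injectives is closed under coproducts when the $G_i$ are finitely presented (this mirrors \cite{krause-stable derived cat of a Noetherian scheme}, relying on the local finite presentability); (2) prove $K_{G\textnormal{-ac}}(Inj)$ is compactly generated and the inclusion is a coproduct- and product-preserving triangulated functor, hence has both adjoints; (3) identify $\class{D}(G) \simeq K_G(Inj)/K_{G\textnormal{-ac}}(Inj)$ via the $G$-injective model structure, using Theorem~\ref{them-injective cotorsion pair for G-derived} and Corollary~\ref{cor-injective model for G-derived}; (4) construct the left adjoint to $K_G(Inj) \to \class{D}(G)$ as fibrant replacement and the right adjoint via Brown representability / Bousfield localization; (5) assemble the two halves into a recollement by checking the standard axioms (the relevant compositions vanish, the unit/counit triangles are exact), which follows formally once the adjoints exist.

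The main obstacle I expect is step (1)–(2): showing that complexes of $G$-injectives, and in particular $G$-acyclic complexes of $G$-injectives, form compactly generated categories. In Krause's Noetherian-scheme setting this rests on special finiteness properties of the injectives; in the relative $G$-exact setting we only have that the $G_i$ are finitely presented, so I would need to verify that $G$-injective envelopes behave well with respect to filtered colimits and coproducts, and produce an explicit set of compact generators — plausibly built from the $G_i$ themselves (or from $G$-injective coresolutions of the $G_i$) together with the disk and sphere complexes. This is precisely the point where the theory of $\lambda$-purity from Appendix~\ref{appendix-lambda pure} and \cite{adamek-rosicky}, \cite{crawley-boevey} is invoked. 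Everything after the compact generation is established should be a faithful (if technical) transcription of Krause's recollement argument, with ``acyclic'' replaced by ``$G$-acyclic'' and ``injective'' by ``$G$-injective'' throughout.
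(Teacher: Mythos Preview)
Your approach is genuinely different from the paper's, and the obstacle you identify is more serious than you suggest.

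The paper does \emph{not} mimic Krause's compact-generation/Brown-representability argument. Instead it constructs three injective cotorsion pairs in $\cha{G}_G$: the pair $(\class{W}_1,\class{F}_1)$ with $\class{F}_1$ the class of all complexes of $G$-injectives (Proposition~\ref{prop-G-INJ model struc}), the pair $(\class{W}_2,\class{F}_2)$ with $\class{F}_2$ the $G$-acyclic complexes of $G$-injectives (Proposition~\ref{prop-G-exact G-INJ model struc}), and the semi-$G$-injective pair $(\class{W},\class{I})$ from Theorem~\ref{them-injective cotorsion pair for G-derived}. These three pairs satisfy the simple containment conditions $\class{F}_2,\class{I} \subseteq \class{F}_1$ and $\class{W} \cap \class{F}_1 = \class{F}_2$, and the recollement then follows directly from \cite[Theorem~3.4]{gillespie-recoll2}. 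No compact generation of $K_G(Inj)$ is ever established or needed; the adjoints are produced by the model-category machinery, not by Brown representability.

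Your proposed route has a likely gap at exactly the point you flag. The claim that ``the class of $G$-injectives is closed under coproducts when the $G_i$ are finitely presented'' is doubtful: already in the pure case (where $\{G_i\}$ ranges over all finitely presented objects), coproducts of pure-injectives are generally \emph{not} pure-injective, so you cannot expect $K_G(Inj)$ to be closed under coproducts computed in $K(\cat{G})$, and the usual route to compact generation is blocked. Krause's original argument relies on the locally Noetherian hypothesis precisely to guarantee that injectives are closed under coproducts; there is no analogue of that hypothesis here. The cotorsion-pair approach buys you the recollement without ever confronting this issue, which is the real point of the paper's method.
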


\begin{proof}
See Theorem~\ref{them-krause recollement for G-derived}.
\end{proof}

The introduction continues in Section~\ref{sec-examples} where we list several applications or examples of the above Theorems.

\

\noindent \textbf{Acknowledgements:} The entire idea of the $G$-derived category was suggested by Mark Hovey. He pointed out that the author's flawed proof of Krause's recollement that appeared in an early version of~\cite{gillespie-recoll} might hold if one could replace the usual derived category with an alternate derived category obtained by ``killing the $G$-acyclic complexes''. The author thanks him for the idea and for several helpful suggestions while writing the paper. The author also wishes to thank the referee for useful comments and suggestions.

\section{Examples}\label{sec-examples}

As described in the Introduction, this paper shows that for a given set of generators $\{G_i\}$ in a Grothendieck category $\cat{G}$, we can do homological algebra by viewing everything ``through the eyes of $G$''. In particular, one should try to understand the proper class of $G$-exact sequences; those short exact sequences which remain exact after applying $\Hom_{\cat{G}}(G_i,-)$ for all the $G_i$. Whenever $G = \oplus G_i$ is projective, then this is just the usual class of short exact sequences and so $\class{D}(G)$ is the usual derived category $\class{D}(\cat{G})$. So the interesting thing is to explore what happens for other choices of $G$. We consider some examples here but there is much more room to explore this theme.

\subsection{Pure and $\boldsymbol{\lambda}$-pure derived categories.}\label{subsec-pure derived recollements in introduction}

In~\cite{hovey-christensen-relative hom alg}, Christensen and Hovey put a model structure on $\ch$ whose homotopy category was the \emph{pure derived category}, obtained by killing the pure acyclic complexes. More generally Krause shows in~\cite[Theorem~4.1]{krause-adjoints in homotopy cats} that the pure derived category $\class{D}_{\textnormal{pur}}(\cat{G})$ exists whenever $\cat{G}$ is a locally finitely presentable Grothendieck category. In this case he shows there is a recollement situation when passing from $K(\cat{G})$ to  $\class{D}_{\textnormal{pur}}(\cat{G})$.  This also follows from Theorem~\ref{them-A} by taking $G = \oplus G_i$ where the $G_i$ range through a set of isomorphism representatives for all finitely presented objects. (However, we note that Krause does not even assume that $\cat{G}$ is Grothendieck, merely additive.) But now we also have the following result as an immediate consequence of our above Theorem~\ref{them-B}.

\begin{them}\label{them-C}
Suppose that $\cat{G}$ is any locally finitely presentable Grothendieck category. Let $\class{D}_{\textnormal{pur}}(\cat{G})$ denote the pure derived category. Let $K(PInj)$ denote the homotopy category of all complexes of pure-injective objects in $\cat{G}$. Let $K_{\textnormal{p-ac}}(PInj)$ denote the homotopy category of all pure acyclic complexes of pure-injectives. Then there is a recollement
\[
\xy
(-30,0)*+{K_{\textnormal{p-ac}}(PInj)};
(0,0)*+{K(PInj)};
(28,0)*+{\class{D}_{\textnormal{pur}}(\cat{G})};
{(-19,0) \ar (-10,0)};
{(-10,0) \ar@<0.5em> (-19,0)};
{(-10,0) \ar@<-0.5em> (-19,0)};
{(10,0) \ar (19,0)};
{(19,0) \ar@<0.5em> (10,0)};
{(19,0) \ar@<-0.5em> (10,0)};
\endxy
.\]
\end{them}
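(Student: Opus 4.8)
The plan is to deduce this as a direct special case of Theorem~\ref{them-B}, following exactly the recipe already used for Theorem~\ref{them-C}'s companion statement (the pure analog of Theorem~\ref{them-A}). First I would take $G = \oplus G_i$ where the $\{G_i\}$ range over a set of isomorphism representatives of all finitely presented objects of $\cat{G}$; since $\cat{G}$ is locally finitely presentable, this is a generating set and each $G_i$ is finitely presented by construction, so the hypotheses of Theorem~\ref{them-B} are met. The second step is to identify the $G$-exact structure with the pure-exact structure: a short exact sequence remains exact after applying $\Hom_{\cat{G}}(G_i,-)$ for all finitely presented $G_i$ precisely when it is pure, so $\cat{G}_G$ is the pure-exact category and the $G$-acyclic complexes are exactly the pure acyclic complexes. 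Consequently $\class{D}(G) = \class{D}_{\textnormal{pur}}(\cat{G})$.

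The third step is the translation of the remaining terms. An object is $G$-injective (injective with respect to $G$-exact sequences) if and only if it is pure-injective, so $K_G(Inj) = K(PInj)$, the homotopy category of all complexes of pure-injective objects, and $K_{G\text{-ac}}(Inj) = K_{\textnormal{p-ac}}(PInj)$, the homotopy category of pure acyclic complexes of pure-injectives. Substituting these identifications into the recollement furnished by Theorem~\ref{them-B} yields precisely the asserted recollement
\[
\xy
(-30,0)*+{K_{\textnormal{p-ac}}(PInj)};
(0,0)*+{K(PInj)};
(28,0)*+{\class{D}_{\textnormal{pur}}(\cat{G})};
{(-19,0) \ar (-10,0)};
{(-10,0) \ar@<0.5em> (-19,0)};
{(-10,0) \ar@<-0.5em> (-19,0)};
{(10,0) \ar (19,0)};
{(19,0) \ar@<0.5em> (10,0)};
{(19,0) \ar@<-0.5em> (10,0)};
\endxy
.
\]

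The only genuine content beyond bookkeeping is the second step — checking that the $G$-exact sequences for this particular $G$ are exactly the pure-exact sequences — but this is the standard characterization of purity in a locally finitely presentable category (a short exact sequence is pure iff it stays exact under $\Hom(C,-)$ for every finitely presented $C$), so it is immediate from the definitions recalled in the Introduction and Appendix~\ref{appendix-lambda pure}. There is no real obstacle; the proof is a matter of unwinding terminology and citing Theorem~\ref{them-B}. (One should, for completeness, remark that the well-definedness of $\class{D}_{\textnormal{pur}}(\cat{G})$ is already guaranteed either by Theorem~\ref{them-A} applied to this $G$ or by Krause's~\cite[Theorem~4.1]{krause-adjoints in homotopy cats}.)
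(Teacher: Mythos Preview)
Your proposal is correct and is exactly the paper's approach: the paper states Theorem~C as ``an immediate consequence of our above Theorem~\ref{them-B}'' after specifying $G = \oplus G_i$ with the $G_i$ ranging over isomorphism representatives of all finitely presented objects, and you have simply spelled out the routine identifications (pure-exact $=$ $G$-exact, pure-injective $=$ $G$-injective) that make this specialization work.
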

 
Theorem~\ref{them-C} is interesting, assuming our category $\cat{G}$ admits chain complexes in $K_{\textnormal{p-ac}}(PInj)$ that are not contractible. We would like to have explicit examples of such complexes, or results indicating when such complexes do not exist.

We describe in Subsection~\ref{subsec-lambda pure derived cat} a generalization of the pure derived category to \emph{any} Grothendieck category by replacing the notion of pure with the notion of $\lambda$-pure where $\lambda$ is some large regular cardinal. We are only able to show that the projective model structure exists. But here a cofibrant replacement of an object $A \in \cat{G}$ is obtained by taking a $\lambda$-pure projective resolution of $A$ in the sense of~\cite{rosicky-prague talk}. It is worth noting that the existence of the $\lambda$-pure derived category doesn't appear to follow from results in~\cite{krause-adjoints in homotopy cats} because the $\lambda$-pure short exact sequences are not closed under filtered colimits, only $\lambda$-filtered colimits. For a similar reason, the $\lambda$-pure exact structure on $\cat{G}$ doesn't appear to be, in general, of \emph{Grothendieck type} in the sense of~\cite{stovicek-exact model cats}. We see in Subsection~\ref{subsec-lambda pure derived cat} that for any generator $G = \oplus G_i$, there is a regular cardinal $\lambda$ and a canonical functor $\class{D}_{\lambda\text{-pur}}(\cat{G})  \xrightarrow{} \class{D}(G)$ where $\class{D}_{\lambda\text{-pur}}(\cat{G})$ is the $\lambda$-pure derived category. This functor admits a left adjoint and provides a map of relative Ext groups $\lambda$-\textnormal{PExt}${}^n_{\cat{G}}(A,B) \rightarrow \GExt^n_{\cat{G}}(A,B)$ which is natural in $A,B \in \cat{G}$.

\subsection{Sheaves of modules on a ringed space.}\label{subsec-sheaves over a ringed space}

Let $\class{O}_X$ be a ringed space, that is, a sheaf of rings on a topological space $X$.
 The category $\class{O}_X\text{-Mod}$ of sheaves of $\class{O}_X$-modules is a Grothendieck category. Lets first recall the standard set of generators for $\class{O}_X$-Mod. For each open $U \subseteq X$, extend
$\class{O}_{|U}$ by 0 outside of $U$ to get a presheaf, which we denote
by $\class{O}_U$. Now sheafify to get an $\class{O}_X$-module,
which we will denote $j!(\class{O}_U)$. There are standard isomorphisms
$\Hom(j!(\class{O}_U) , G) \cong \Hom(\class{O}_U , G) \cong G(U)$. It follows at once that
the set $\{\, j!(\class{O}_U) \,\}$ forms a generating set since
the modules $j!(\class{O}_U)$ ``pick out points''. Hence the
direct sum $G = \bigoplus_{U \subseteq X} j!(\class{O}_U)$ is a
generator. The above isomorphisms also imply that the $G$-exact category is just $\class{O}_X\text{-Mod}$ together with the proper class of short \emph{presheaf exact} sequences of $\class{O}_X$-modules. That is, a $G$-exact sequence is an exact sequence $0 \rightarrow F \rightarrow G \rightarrow H \rightarrow 0$  of $\class{O}_X$-modules for which $0 \rightarrow F(U) \rightarrow G(U) \rightarrow H(U) \rightarrow 0$ is an exact sequence of $\class{O}(U)$-modules for each open $U \subseteq X$. The $G$-derived category of Theorem~\ref{them-A1} is thus the category of unbounded complexes of $\class{O}_X$-modules modulo the the \emph{presheaf acyclic} complexes.  Using, again, the above isomorphisms, it  follows immediately from~\cite[Exercise~II.1.11]{hartshorne} that each $j!(\class{O}_U)$ is finitely presented whenever the space $X$ is Noetherian. In particular, whenever $X = (X,\class{O}_X)$ is a Noetherian scheme then $\class{D}(G)$ is compactly generated. Also Theorems~\ref{them-A} and~\ref{them-B} apply in this case and the reader can interpret what they say. Just note that a $G$-injective $\class{O}_X$-module here translates to one that is injective with respect to the short presheaf exact sequences. By Proposition~\ref{prop-G-exact categories have enough injectives}, there are enough such $G$-injectives in the sense that we can find for any $\class{O}_X$-module $F$ a short presheaf exact sequence $0 \rightarrow F \rightarrow I \rightarrow I/F \rightarrow 0$ where $I$ is $G$-injective.

\subsection{Quasi-coherent sheaves over the projective line $\textbf{P}^1(k)$.}\label{subsec-quasi coherent sheaves on P1(k)}

Let $k$ be a commutative ring with identity. Here we consider the category of quasi-coherent sheaves over the projective line $\textbf{P}^1(k)$. However, we use the quiver description of this category from~\cite{enochs-quasi-coherent}, \cite{enochs-estrada-flat cotorsion sheaves. applications.}, \cite{enochs-quasi-coherent-projectiveline} and~\cite{enochs-estrada-locally projective monoidal model}. From this point of view, we consider the representation $$R \equiv k[x] \hookrightarrow k[x,x^{-1}] \hookleftarrow k[x^{-1}]$$ of the quiver $Q \equiv \bullet \rightarrow \bullet \leftarrow \bullet$. Then $R$ corresponds to the structure sheaf on $\textbf{P}^1(k)$. A quasi-coherent sheaf of modules over $\textbf{P}^1(k)$ may be thought of as a representation $$A \equiv M \xrightarrow{f} L \xleftarrow{g} N$$ with $M$ a $k[x]$-module, $L$ a $k[x,x^{-1}]$-module, $N$ a $k[x^{-1}]$-module,  $f$ a $k[x]$-linear map, and $g$ a $k[x^{-1}]$-linear map; all satisfying that the localization maps $S^{-1}f : S^{-1}M \xrightarrow{} S^{-1}L \cong L$ and $T^{-1}g : T^{-1}N \xrightarrow{} T^{-1}L \cong L$ are $k[x,x^{-1}]$-isomorphisms, where $S = \{1,x,x^2,\cdots\}$ and $T = \{1,x^{-1},x^{-2},\cdots\}$. We call such an $A$ a \emph{quasi-coherent $R$-module}. A morphism is the obvious triple of linear maps providing commutative squares. Denote by $\qcor$ the category of all quasi-coherent $R$-modules. Then $\qcor$ is equivalent to the category of quasi-coherent sheaves on $\textbf{P}^1(k)$ and so it is a Grothendieck category. There is a set of generators corresponding to the line bundles of degree $n$ over $\textbf{P}^1(k)$. They are the quasi-coherent $R$-modules  $$R(n) \equiv k[x] \hookrightarrow k[x,x^{-1}] \xleftarrow{x^n} k[x^{-1}]\, , \ \ \ n \in \Z$$ where the map on the right is multiplication by $x^n$. Tensor products, direct limits, and finite limits are all taken componentwise. In particular, a short exact sequence in $\qcor$ is one having all three involved short sequences exact. We refer the reader to~\cite{enochs-quasi-coherent}, \cite{enochs-estrada-flat cotorsion sheaves. applications.}, \cite{enochs-quasi-coherent-projectiveline} and~\cite{enochs-estrada-locally projective monoidal model} for more detail on all of the above.

Now given any $A \in \qcor$, by regarding it as a diagram $M \xrightarrow{f} L \xleftarrow{g} N$ of just abelian groups, we may take the pullback $M \times_L N$. Denote this abelian group by $PA$. Also, given an integer $n$, denote by $A(n)$ the \emph{twisted sheaf} $R(n) \tensor_R A$. Note that there is an obvious isomorphism
$A(n) \equiv M \xrightarrow{f} L \xleftarrow{x^n \cdot g} N$. Each $R(n)$ is flat and in particular if $0 \xrightarrow{} A \xrightarrow{} B
\xrightarrow{} C \xrightarrow{} 0$ is a short exact sequence in $\qcor$, then so is $0 \xrightarrow{} A(n) \xrightarrow{} B(n)
\xrightarrow{} C(n) \xrightarrow{} 0$. Consequently we have that $0 \xrightarrow{} PA(n) \xrightarrow{} PB(n)
\xrightarrow{} PC(n)$ is exact. If each $PB(n) \xrightarrow{} PC(n)$ is also onto, then lets refer to $0 \xrightarrow{} A \xrightarrow{} B
\xrightarrow{} C \xrightarrow{} 0$ as a \emph{twisted fibre exact sequence}.

From~\cite{enochs-quasi-coherent-projectiveline} we have that $\{R(n)\}$ is a set of (flat) generators for $\qcor$. Setting $G = \oplus_{n \in \Z} R(n)$, one can show that the $G$-exact sequences are precisely the twisted fibre exact sequences. Indeed for each $n$ one can check directly that the elements of $\Hom_{\qcor}(R(n),A)$ are in one to one correspondence with the elements of the pullback $PA(-n)$. That is, we have natural isomorphisms of abelian groups $\Hom_{\qcor}(R(n),A) \cong PA(-n)$. This isomorphism also can be used to show that each $R(n)$ is finitely presented: For a direct limit $\varinjlim A_i$, using that pullbacks and tensor products commute with direct limits we see $$\Hom_{\qcor}(R(n),\varinjlim A_i) \cong P[(\varinjlim A_i)(-n)] \cong P[R(-n) \tensor_R \varinjlim A_i] \cong$$ $$P[\varinjlim(R(-n) \tensor_R A_i)] \cong \varinjlim P[R(-n) \tensor_R A_i]  \cong \varinjlim PA_i(-n)$$  $$\cong \varinjlim \Hom_{\qcor}(R(n),A_i).$$ So Theorems~\ref{them-A1},~\ref{them-A}, and~\ref{them-B} apply. Moreover, our characterization of the cofibrant and trivially cofibrant objects provided by Theorem~\ref{them-projective model for G-derived} allows one to easily check that the model structure is \emph{monoidal} so that the tensor product descends to a well-behaved tensor product on the $G$-derived category. To do this, apply Hovey's~\cite[Theorem~7.2]{hovey} and the method of~\cite[Theorem~5.1]{gillespie-quasi-coherent}; it all boils down to the fact that $R(m) \tensor_R R(n) \cong R(m+n)$ which was shown from the quiver perspective in~\cite[Proposition~3.3]{enochs-quasi-coherent-projectiveline}.

\subsection{Other examples concerning modules over a ring.}\label{subsec-examples concerning modules over a ring}

Let $R$ be a ring with 1, and let $\cat{G} = R\text{-Mod}$ be the category of (left) $R$-modules.
Note that if $\class{S}$ is \emph{any} set of $R$-modules, then $\class{S} \cup \{R\}$ is a generating set for $\rmod$. So Theorem~\ref{them-A1} gives us a model structure killing the exact complexes which remain exact after applying $\Hom_R(S,-)$ for all $S \in \class{S}$. Of course Theorems~\ref{them-A} and~\ref{them-B} also hold if all the $S$ are finitely presented modules. Moreover, whenever $\class{S} \subseteq \class{T}$, then in a way analogous to Corollary~\ref{cor-lambda derived cat} we have a canonical functor $\class{D}(\class{T}) \xrightarrow{} \class{D}(\class{S})$ with a left adjoint. The functor provides a mapping of relative $\Ext$ groups.
We give two interesting examples below.

\subsubsection{The clean derived category}\label{example-clean derived cat}
For non-coherent rings we have the following variant of the pure derived category. An $R$-module is said to be of \emph{type $FP_{\infty}$} if it has a projective resolution consisting of finitely generated free modules. The category of all type $FP_{\infty}$ modules has a small skeleton. So we can take $\class{S}$ to be a set of isomorphism representatives. Then with $G = \oplus_{S \in \class{S}} S$ we get that the $G$-exact category $\cat{G}_G$ is exactly the category of $R$-modules along with the proper class of all \emph{clean exact sequences} in the sense of~\cite{bravo-gillespie-hovey}. The injectives in $\cat{G}_G$ ought to be called \emph{clean injective} modules. The projectives in $\cat{G}_G$ are precisely direct summands of direct sums of modules of type $FP_{\infty}$. Since all modules of type $FP_{\infty}$ are finitely presented, Theorems~\ref{them-A1},~\ref{them-A} and~\ref{them-B} apply giving recollements involving the \emph{clean derived category}. We see a canonical functor from the pure derived category to the clean derived category. However, we point out that for coherent rings, a module is finitely presented if and only if it is of type $FP_{\infty}$. So this example only differs from the pure derived category for non-coherent rings.

It seems likely that the clean derived category will generalize to some other locally finitely presented Grothendieck categories. By~\cite[Corollary~1.6]{bieri} we have that for modules over a ring, $F$ is of type $FP_{\infty}$ if and only if $\Ext^n_R(F,-)$ preserves direct limits for all $n \geq 0$. So in the more general setting, even without enough projective objects, one could define an object $F \in \cat{G}$ to be of type $FP_{\infty}$ if $\Ext^n_{\cat{G}}(F,-)$ preserves direct limits for all $n \geq 0$. However, one needs to be sure that the objects of type $FP_{\infty}$ form a generating set for $\cat{G}$!

\subsubsection{Inj-acyclic complexes}\label{example-clean derived cat}
Suppose $R$ is (left) Noetherian. Recall that every injective (left) $R$-module is a direct sum of indecomposable injective modules and there is a set $\class{S}$ of (isomorphism representatives) of all indecomposable injectives. (See~\cite[Theorem~3.48]{lam}.) So taking $G$ to be the direct sum of $R$ and all the indecomposable injectives, it is easy to see that a short exact sequence is $G$-exact if and only if it remains exact after applying $\Hom_R(I,-)$ where $I$ is any injective $R$-module. So these are a proper class of short exact sequences and the injective modules are projective objects with respect to these. More generally, by part~(4) of Corollary~\ref{cor-G-projective generator}, the $G$-projectives are precisely the direct summands of direct sums of modules in $\class{S} \cup \{R\}$. By Theorem~\ref{them-A1}, we get a model structure for an associated derived category obtained by killing all the exact ``Inj-acyclic'' complexes.

\section{The G-exact category $\cat{G}_{G}$}\label{sec-The G-exact category of a locally finitely presented Grothendieck category}

Throughout this section $\cat{G}$ will always denote a Grothendieck category with a chosen (fixed) set of generators $\{G_i\}_{i \in I}$. Furthermore, $G$ will always denote their direct sum $G = \oplus_{i \in I} G_i$. So $G$ itself is a generator for $\cat{G}$. The goal of this section is to give a detailed construction of an exact category, in the sense of Quillen~\cite{quillen-algebraic K-theory} and~\cite{buhler-exact categories}, which we will call the $G$-exact category of $\cat{G}$. Being abelian, an exact structure on $\cat{G}$ is, as shown in Appendix~\ref{appendix-proper classes}, nothing more than a proper class of short exact sequences in the sense of~\cite{homology}. In this case, the proper class is the class of all $G$-exact sequences. That is, the short exact sequences $0 \xrightarrow{} A \xrightarrow{} B \xrightarrow{} C \xrightarrow{} 0$ which remain exact after applying $\Hom_{\cat{G}}(G,-)$. We denote this exact category by $\cat{G}_G$, and see that $G$ is a projective generator for $\cat{G}_G$.

\subsection{$\boldsymbol{G}$-exact sequences and $\boldsymbol{G}$-projectives.}
Recall that an object $G$ in an abelian category $\cat{A}$ is a \emph{generator} if $\Hom_{\cat{A}}(G,-)$ is faithful. Since $\cat{A}$ is abelian this is equivalent to saying that if $f : A \xrightarrow{} B$ is nonzero, then there exists a map $s : G \xrightarrow{} A$ such that $fs \neq 0$. We have the following basic fact.

\begin{lemma}\label{lemma-generators reflect exactness}
Let $G$ be a generator for any abelian category $\cat{A}$ and let $X$ be a chain complex in $\cha{A}$. If the complex of abelian groups $\Hom_{\cat{A}}(G,X)$ is exact, then $X$ itself must be exact.

\end{lemma}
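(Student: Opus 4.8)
The plan is to show that if $X$ is a chain complex with $\Hom_{\cat{A}}(G,X)$ exact, then each differential $d_n \colon X_n \to X_{n-1}$ has $\ker d_{n-1} = \im d_n$, and since $X$ is a complex we already have $\im d_n \subseteq \ker d_{n-1}$, so the real content is the reverse inclusion, i.e.\ that the homology object $H_n(X) = \ker d_{n-1}/\im d_n$ is zero. The idea is to exploit the fact that $G$ detects nonzero maps: an object $Z$ in $\cat{A}$ is zero if and only if $\Hom_{\cat{A}}(G,Z) = 0$, because if $Z \neq 0$ the identity map $1_Z$ is nonzero, so there is $s \colon G \to Z$ with $1_Z \circ s = s \neq 0$.

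First I would set up the comparison between the homology of $X$ and the homology of $\Hom_{\cat{A}}(G,X)$. Since $\Hom_{\cat{A}}(G,-)$ is left exact, it carries the kernel $\ker d_{n-1} \hookrightarrow X_n$ to $\ker(\Hom(G,d_{n-1})) = \Hom_{\cat{A}}(G, \ker d_{n-1})$; that is, cycles of $\Hom(G,X)$ in degree $n$ are exactly $\Hom_{\cat{A}}(G, Z_n)$ where $Z_n = \ker d_{n-1}$. The boundaries $\im(\Hom(G,d_n))$ sit inside this as the image of $\Hom_{\cat{A}}(G,X_n) \to \Hom_{\cat{A}}(G,Z_n)$ induced by the corestriction $X_n \twoheadrightarrow$ wait — more carefully, $d_n$ factors as $X_n \xrightarrow{e_n} Z_{n-1} \hookrightarrow X_{n-1}$ (since $X$ is a complex), and $\im d_n = \im e_n = B_{n-1}$. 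Then exactness of $\Hom_{\cat{A}}(G,X)$ in degree $n-1$ says precisely that $\Hom_{\cat{A}}(G,X_n) \to \Hom_{\cat{A}}(G,Z_{n-1})$ is surjective, i.e.\ $\Hom_{\cat{A}}(G, e_n)$ is surjective onto $\Hom_{\cat{A}}(G, Z_{n-1})$.

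Next I would translate this surjectivity into the statement $H_{n-1}(X) = 0$. Consider the short exact sequence $0 \to B_{n-1} \to Z_{n-1} \to H_{n-1}(X) \to 0$. Applying $\Hom_{\cat{A}}(G,-)$ gives left-exactness, so $H_{n-1}(X) = 0$ will follow once I know $\Hom_{\cat{A}}(G, H_{n-1}(X)) = 0$, by the detection principle above. Now a map $G \to H_{n-1}(X)$ lifts, after precomposing with an epimorphism from a coproduct of copies of $G$ — or, more cleanly, I would argue directly: every element of $\Hom_{\cat{A}}(G, Z_{n-1})$ comes from $\Hom_{\cat{A}}(G, X_n)$ and hence, composing with $e_n$, factors through $B_{n-1}$; so the composite $\Hom_{\cat{A}}(G,Z_{n-1}) \to \Hom_{\cat{A}}(G, H_{n-1}(X))$ is zero. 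But $Z_{n-1} \to H_{n-1}(X)$ is an epimorphism, and $\Hom_{\cat{A}}(G,-)$ need not preserve epimorphisms — however a map $G \to H_{n-1}(X)$ need not lift to $Z_{n-1}$. So instead I take any $h \colon G \to H_{n-1}(X)$; I do not get a lift, so this naive approach stalls. The fix: use that $G$ is a generator to get an epimorphism $G^{(\Lambda)} \twoheadrightarrow H_{n-1}(X)$ from a coproduct; equivalently, it suffices to show $1_{H_{n-1}(X)}$ is zero, and for that I take the pullback of $Z_{n-1} \twoheadrightarrow H_{n-1}(X)$ along a generating epimorphism. Cleanest is: if $H := H_{n-1}(X) \neq 0$, pick $s \colon G \to H$ nonzero; form the pullback $P = G \times_H Z_{n-1}$, which surjects onto $G$, and pull back further to find $t\colon G \to P$; then $G \xrightarrow{t} P \to Z_{n-1}$ is a cycle whose class in $H$ is a nonzero multiple... this requires one more diagram chase but is routine.

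The main obstacle I anticipate is precisely this last point: $\Hom_{\cat{A}}(G,-)$ is only left exact, so I cannot directly lift a map $G \to H_{n-1}(X)$ through the epimorphism $Z_{n-1} \twoheadrightarrow H_{n-1}(X)$; one must use the generator property to manufacture such a lift after pulling back along a suitable epimorphism (or, equivalently, argue that $\Hom_{\cat{A}}(G, H_{n-1}(X)) = 0$ via a pullback construction). Once that diagram chase is done, the conclusion $H_n(X) = 0$ for all $n$, hence exactness of $X$, is immediate. Everything else — the identification of cycles and boundaries of $\Hom_{\cat{A}}(G,X)$ with $\Hom_{\cat{A}}(G,-)$ applied to the cycles and boundaries of $X$ — is formal from left-exactness of $\Hom$.
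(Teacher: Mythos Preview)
Your approach is essentially the paper's, and you have all the ingredients, but you talk yourself out of the finish. You correctly establish that every map $s \colon G \to Z_{n-1}$ factors through $B_{n-1}$, i.e.\ that the composite $\Hom_{\cat{A}}(G,Z_{n-1}) \xrightarrow{\pi_*} \Hom_{\cat{A}}(G,H_{n-1}(X))$ is zero. At this point you are done: $\pi_* = 0$ means $\Hom_{\cat{A}}(G,\pi) = 0$, and since $G$ is a generator, $\Hom_{\cat{A}}(G,-)$ is \emph{faithful}, so $\pi = 0$. As $\pi \colon Z_{n-1} \twoheadrightarrow H_{n-1}(X)$ is an epimorphism, $H_{n-1}(X) = 0$. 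No lifting of maps $G \to H_{n-1}(X)$ is needed. The paper runs the same argument phrased as showing directly that $d_{n+1} \colon X_{n+1} \to Z_nX$ is an epimorphism: given $f \colon Z_nX \to Y$ with $f d_{n+1} = 0$, any $s \colon G \to Z_nX$ lifts to $X_{n+1}$ by exactness of $\Hom_{\cat{A}}(G,X)$, forcing $fs = 0$; faithfulness then gives $f = 0$.

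Your proposed pullback fix is both unnecessary and incomplete as written. From the pullback $P = G \times_{H} Z_{n-1}$ and a map $u \colon G \to P$ with $p_1 u \neq 0$, you obtain only $s \circ (p_1 u) = 0$ for some nonzero endomorphism $p_1 u$ of $G$; this does not force $s = 0$ without further argument, and iterating gets you nowhere. The ``one more diagram chase'' you anticipate is not routine---it is a dead end. The correct move is the one-line application of faithfulness you already had available two sentences earlier.
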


\begin{proof}
We just need to show that $d_{n+1} : X_{n+1} \xrightarrow{} Z_nX$ is an epimorphism, that is, right cancelable. Since $\cat{A}$ is abelian we just need to show that for a map $f : Z_nX \xrightarrow{} Y$ we have $fd_{n+1} = 0$ implies $f = 0$. By way of contradiction, say $fd_{n+1} = 0$ but $f \neq 0$. Then because $G$ is a generator we get a map $s : G \xrightarrow{} Z_nX$ such that $fs \neq 0$. But notice $s$ determines a map in the domain of $(d_n)_* : \Hom_{\cat{A}}(G, X_n) \xrightarrow{} \Hom_{\cat{A}}(G, X_{n-1})$ for which $(d_n)_*(s) = 0$. So by hypothesis we have $s \in \ker{(d_n)_*} = \im{(d_{n+1})_*}$ which ensures a map $t : G \xrightarrow{} X_{n+1}$ such that $s = d_{n+1}t$. Now $fd_{n+1} = 0$ implies $fd_{n+1}t = 0$ implies $fs = 0$, which is the contradiction.
\end{proof}

Now let $R = \Hom_{\cat{G}}(G,G)$ be the endomorphism ring of $G$ and let $\text{Mod-}R$ be the category of right $R$-modules. By the Gabriel-Popescu Theorem, the functor $\Hom_{\cat{G}}(G,-) : \cat{G} \xrightarrow{} \text{Mod-}R$ is fully faithful and has an exact left adjoint $T$. Therefore $\cat{G}$ is equivalent to the full subcategory $\cat{S} = \im{[\Hom_{\cat{G}}(G,-)]}$ of $\text{Mod-}R$. Since the property of being a Grothendieck category is stable under equivalence of categories we know that $S$ is Grothendieck. However $S$ is not an abelian subcategory of $\text{Mod-}R$. In particular, if $0 \xrightarrow{} A \xrightarrow{f} B \xrightarrow{g} C \xrightarrow{} 0$ is a short exact sequence in $\cat{G}$, then of course $0 \xrightarrow{} \Hom_{\cat{G}}(G,A) \xrightarrow{f_*} \Hom_{\cat{G}}(G,B) \xrightarrow{g_*} \Hom_{\cat{G}}(G,C)$ is generally only a left exact sequence in $\text{Mod-}R$. But this IS a short exact sequence in the abelian category $\cat{S}$. Indeed lets show directly that $g_*$ is right cancelable in $\cat{S}$, making it an epimorphism in $\cat{S}$: Given any morphism $t : \Hom_{\cat{G}}(G,C) \xrightarrow{} S$ in $\cat{S}$, we wish to show $0 = t g_*$ implies $0=t$. But $\Hom_{\cat{G}}(G,-)$ is full and so $t$ must take the form $\Hom_{\cat{G}}(G,C) \xrightarrow{h_*} \Hom_{\cat{G}}(G,D)$ for some $h : C \xrightarrow{} D$ in $\cat{G}$. So we have $0 = tg_* = h_*g_* = (hg)_*$. Since $\Hom_{\cat{G}}(G,-)$ is faithful we have $hg = 0$. But $g$ is right cancelable, so $h=0$ and this implies $h_* = t=0$.

\begin{definition}
We call a pair of composeable maps $A \xrightarrow{f} B \xrightarrow{g} C$ in $\cat{G}$ a \textbf{$\boldsymbol{G}$-exact sequence}  if $0 \xrightarrow{} \Hom_{\cat{G}}(G,A) \xrightarrow{f_*} \Hom_{\cat{G}}(G,B) \xrightarrow{g_*} \Hom_{\cat{G}}(G,C) \xrightarrow{} 0$ is a short exact sequence in the category of abelian groups (so also in $\text{Mod-}R$). We often  denote a $G$-exact sequence by $A \rightarrowtail B \twoheadrightarrow C$, and call $A \rightarrowtail B$ a \textbf{$\boldsymbol{G}$-monomorphism} and $B \twoheadrightarrow C$ a \textbf{$\boldsymbol{G}$-epimorphism}. We will also call a subobject $P \subseteq A$ a \textbf{$\boldsymbol{G}$-subobject} if the inclusion map is a $G$-monomorphism, and denote this $P \subseteq_G A$.

\end{definition}

We list some basic properties of $G$-exact sequences.

\begin{proposition}\label{prop-G-exact sequences-properties}
We have the following properties of $G$-exact sequences.
\begin{enumerate}
\item Any $G$-exact sequence is an exact sequence in $\cat{G}$.
\item The class of all $G$-exact sequences is closed under isomorphisms and contains all split exact sequence.
\item A pushout of a $G$-monomorphism is again a $G$-monomorphism. In fact, $\Hom_{\cat{G}}(G,-)$ takes pushouts of $G$-monomorphisms to pushouts in $\text{Mod-}R$. We also have that pullbacks of $G$-epimorphisms are again $G$-epimorphisms. Moreover, $\Hom_{\cat{G}}(G,-)$ takes all pullbacks in $\cat{G}$ to pullbacks in $\text{Mod-}R$ since it is a right adjoint.
\item $G$-monomorphisms are closed under composition and $G$-epimorphisms are closed under composition.
\end{enumerate}
\end{proposition}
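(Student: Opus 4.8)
The plan is to verify the four properties directly from the definition of $G$-exact sequences, leveraging that $\Hom_{\cat{G}}(G,-)$ identifies $\cat{G}$ with the full subcategory $\cat{S} \subseteq \text{Mod-}R$, and that a short exact sequence in $\cat{G}$ is $G$-exact precisely when its image under $\Hom_{\cat{G}}(G,-)$ is short exact in the category of abelian groups (equivalently in $\text{Mod-}R$, equivalently in $\cat{S}$).

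For item (1), a $G$-exact sequence $A \xrightarrow{f} B \xrightarrow{g} C$ has $0 \to \Hom(G,A) \to \Hom(G,B) \to \Hom(G,C) \to 0$ exact; since $\Hom_{\cat{G}}(G,-)$ is left exact and faithful, $f$ is monic, $gf = 0$, and one checks $g$ is epic and $\ker g = \im f$ by applying Lemma~\ref{lemma-generators reflect exactness} to the complex $A \to B \to C$ (or rather the obvious two-term complexes), using that a generator reflects exactness. For item (2), closure under isomorphism is immediate since $\Hom_{\cat{G}}(G,-)$ carries isomorphisms to isomorphisms and short exactness is preserved under isomorphism of diagrams; for split exact sequences $A \rightarrowtail B \twoheadrightarrow C$ with $B \cong A \oplus C$, applying the additive functor $\Hom_{\cat{G}}(G,-)$ yields $\Hom(G,B) \cong \Hom(G,A) \oplus \Hom(G,C)$ with the structure maps being the canonical inclusion and projection, hence a (split) short exact sequence.

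Item (3) is the crux and the place to be careful. For the pushout claim: given a $G$-monomorphism $f : A \rightarrowtail B$ and any map $A \to A'$, form the pushout $B' = B \sqcup_A A'$ in $\cat{G}$; we must show $A' \to B'$ is again a $G$-monomorphism and that $\Hom_{\cat{G}}(G,-)$ sends this pushout square to a pushout square in $\text{Mod-}R$. The idea is that a pushout along a $G$-monomorphism fits into a commutative ladder of short exact sequences $A \rightarrowtail B \twoheadrightarrow B/A$ and $A' \rightarrowtail B' \twoheadrightarrow B'/A'$ with $B/A \cong B'/A'$; applying $\Hom_{\cat{G}}(G,-)$ to the first row gives a short exact sequence of abelian groups by hypothesis, applying it to the second gives an a priori only left exact sequence, but the induced map on cokernels $\Hom(G,B)/\Hom(G,A) \to \Hom(G,B')/\Hom(G,A')$ lands in $\Hom(G, B/A)$, and a diagram chase (or the five lemma applied in $\cat{S}$, where these ARE short exact) forces the second row to be short exact as well, i.e. $\Hom(G,B') \to \Hom(G,B'/A')$ is onto. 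The pushout-square assertion then follows because $\Hom_{\cat{G}}(G,-)$ is additive and preserves the relevant kernels/cokernels once short exactness is known. The pullback statements are easier: $\Hom_{\cat{G}}(G,-)$ is a right adjoint, hence preserves all limits and in particular all pullbacks; given a $G$-epimorphism $g : B \twoheadrightarrow C$ and $C' \to C$, the pullback $B' = B \times_C C'$ maps to $C'$ via $g'$, and since $\Hom(G,B') = \Hom(G,B) \times_{\Hom(G,C)} \Hom(G,C')$ and $\Hom(G,g)$ is onto, a direct check shows $\Hom(G,g')$ is onto, so $g'$ is a $G$-epimorphism (that its kernel behaves correctly follows from item (1) applied to $\ker g' \rightarrowtail B' \twoheadrightarrow C'$, noting $\ker g' \cong \ker g$).

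Item (4) follows by composing short exact sequences: if $A \overset{f}{\rightarrowtail} B$ and $B \overset{f'}{\rightarrowtail} C$ are $G$-monomorphisms, then applying $\Hom_{\cat{G}}(G,-)$ gives that $f_*$ and $f'_*$ are monic with $\cok f_*$, $\cok f'_*$ the Hom-groups of the cokernels; one checks $(f'f)_* = f'_* f_*$ is monic and, via the snake lemma in $\cat{S}$ (where all three are short exact), that $\cok (f'f)_* \cong \Hom(G, \cok(f'f))$, giving $G$-exactness of $A \rightarrowtail C \twoheadrightarrow \cok(f'f)$. Dually, the composite of two $G$-epimorphisms is a $G$-epimorphism because $\Hom_{\cat{G}}(G,-)$ sends each to a surjection of abelian groups, and a composite of surjections is a surjection, with the kernel computed via item (1). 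The main obstacle throughout is item (3): one must be careful that $\Hom_{\cat{G}}(G,-)$ is only left exact on $\cat{G}$ itself, so the surjectivity statements genuinely require the $G$-exactness hypothesis and cannot be obtained for free — the cleanest route is to pass to the abelian category $\cat{S}$, where $G$-exact sequences become honest short exact sequences and the standard homological diagram lemmas (five lemma, snake lemma) are available.
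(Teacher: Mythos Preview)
Your overall strategy is sound and matches the paper closely for items (1), (2), the pullback half of (3), and the $G$-epimorphism half of (4). The genuine issue lies in your repeated appeal to ``pass to the abelian category $\cat{S}$, where $G$-exact sequences become honest short exact sequences.'' This is a conceptual slip: since $\Hom_{\cat{G}}(G,-)\colon \cat{G}\to\cat{S}$ is an equivalence, \emph{every} short exact sequence in $\cat{G}$ becomes short exact in $\cat{S}$, not just the $G$-exact ones. So diagram lemmas applied inside $\cat{S}$ cannot detect the distinction you need, namely surjectivity in $\text{Mod-}R$. Your parenthetical ``five lemma in $\cat{S}$'' in item (3) and your ``snake lemma in $\cat{S}$ (where all three are short exact)'' in item (4) therefore do not prove what you want; in item (4) this is your only stated justification, so as written there is a gap.

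For the pushout case in (3), your ``diagram chase'' remark is in fact enough and is essentially what the paper does: in the ladder with rows $A\rightarrowtail B\twoheadrightarrow B/A$ and $A'\rightarrowtail B'\twoheadrightarrow B/A$ and identity on the right, the composite $\Hom(G,B)\to\Hom(G,B')\to\Hom(G,B/A)$ equals the surjection $\Hom(G,B)\to\Hom(G,B/A)$, forcing the second arrow to be surjective in $\text{Mod-}R$. For item (4) ($G$-monomorphisms), the paper takes a different and cleaner route than your direct attempt: it forms the pushout $P$ of $B/A\leftarrow B\xrightarrow{j} C$, obtaining a diagram whose bottom row $B/A\rightarrowtail P\twoheadrightarrow C/B$ is $G$-exact by the just-proved pushout part of (3). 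Now both rows and the left column are short exact in $\text{Mod-}R$, so the snake lemma \emph{in $\text{Mod-}R$} (not in $\cat{S}$) gives that $\Hom(G,C)\to\Hom(G,P)$ is surjective, hence $A\rightarrowtail C$ is a $G$-monomorphism. Your direct approach can also be made to work by a two-step lift through $\Hom(G,C/B)$ and $\Hom(G,B/A)$, but you should replace the $\cat{S}$ justification with an explicit chase in $\text{Mod-}R$.
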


\begin{proof}
For (1), note that in the definition of $G$-exact sequence we have $0 = g_*f_* = (gf)_*$. So $\Hom_{\cat{G}}(G,-)$ faithful implies $0 = gf$. So we can view $A \xrightarrow{f} B \xrightarrow{g} C$ as a chain complex in $\cat{G}$, and so (1) follows from Lemma~\ref{lemma-generators reflect exactness}.

(2) is clear.

For (3), we first show that a pullback of a $G$-epimorphism is a $G$-epimorphism. Let $0 \xrightarrow{} A \xrightarrow{f} B \xrightarrow{g} C \xrightarrow{} 0$ be a given $G$-exact sequence. Taking a pullback $B \xrightarrow{g} C \xleftarrow{} X$ leads to a diagram of short exact sequences.
$$\begin{CD}
0       @>>>     A    @>f'>> P @>g'>> X @>>> 0  \\
@. @|             @VVV   @VVV @.            \\
0       @>>>  A @>f>> B    @>g>> C @>>> 0\\
\end{CD}$$
Applying $\Hom_{\cat{G}}(G,-)$ to this diagram gives us a commutative diagram with the bottom row exact
$$\begin{CD}
0       @>>>     \Hom_{\cat{G}}(G,A)    @>f'_*>> \Hom_{\cat{G}}(G,P) @>g'_*>> \Hom_{\cat{G}}(G,X) \\
@. @|             @VVV   @VVV @.            \\
0       @>>>  \Hom_{\cat{G}}(G,A) @>f_*>> \Hom_{\cat{G}}(G,B)    @>g_*>> \Hom_{\cat{G}}(G,C) @>>> 0\\
\end{CD}$$
But the functor $\Hom_{\cat{G}}(G,-) : \cat{G} \xrightarrow{} \text{Mod-}R$ is a right adjoint and so it preserves limits, so in particular it preserves pullbacks. Therefore the right square is a pullback in $\text{Mod-}R$. So since $g_*$ is an epimorphism we get that $g'_*$ must also be an epimorphism. This proves $0 \xrightarrow{} A \xrightarrow{f'} P \xrightarrow{g'} X \xrightarrow{} 0$ is a $G$-exact sequence.

Next, we wish to show that a pushout of a $G$-monomorphism is a $G$-monomorphism. So consider a $G$-exact sequence $0 \xrightarrow{} A \xrightarrow{f} B \xrightarrow{g} C \xrightarrow{} 0$. Taking a pushout of $X \xleftarrow{} A \xrightarrow{f} B$ leads to a diagram of short exact sequences.
$$\begin{CD}
0       @>>>     A    @>f>> B @>g>> C @>>> 0  \\
@.            @VVV   @VVV @|     @.   \\
0       @>>>  X @>f'>> P    @>g'>> C @>>> 0\\
\end{CD}$$ We only need to show that $g'_*$ is an epimorphism. Since $\Hom_{\cat{G}}(G,-)$ is not a left adjoint we can't expect it to preserve all pushouts. However, note that since $\Hom_{\cat{G}}(G,-) : \cat{G} \xrightarrow{} \cat{S}$ is an equivalence it takes pushouts in $\cat{G}$ to pushouts in the abelian category $\cat{S}$. This implies that we get the $\cat{S}$-diagram below with $\cat{S}$-exact rows and with the left square being a pushout in $\cat{S}$.
$$\begin{CD}
0       @>>>     \Hom_{\cat{G}}(G,A)    @>f_*>> \Hom_{\cat{G}}(G,B) @>g_*>> \Hom_{\cat{G}}(G,C) \\
@. @VVV             @VVV   @|             \\
0       @>>>  \Hom_{\cat{G}}(G,X) @>f'_*>> \Hom_{\cat{G}}(G,P)    @>g'_*>> \Hom_{\cat{G}}(G,C) \\
\end{CD}$$
But by hypothesis, $g_*$ is an epimorphism in $\text{Mod-}R$, and so we see immediately that $g'_*$ is also an epimorphism in $\text{Mod-}R$. This shows that  $X \xrightarrow{f'} P \xrightarrow{g'} C$ is a $G$-exact sequence. In fact, since the rows of the diagram above are exact in $\text{Mod-}R$, it follows that the left hand square is actually the pushout in $\text{Mod-}R$. So the functor $\Hom_{\cat{G}}(G,-) : \cat{G} \xrightarrow{} \text{Mod-}R$ preserves pushouts of $G$-monomorphisms.

For (4), we first show that $G$-epimorphisms are closed under composition. Say $B \xrightarrow{g} C$ and $C \xrightarrow{h} D$ are each $G$-epimorphisms. Since each is an epimorphism, so is the composition $hg$. Then $0 \xrightarrow{} \ker{hg} \xrightarrow{} B \xrightarrow{hg} D \xrightarrow{} 0$ must be a $G$-exact sequence since $(hg)_* = h_*g_*$ is an epimorphism.

Finally, we wish to show that $G$-monomorphisms are closed under composition. So let  $A \xrightarrow{i} B$ and $B \xrightarrow{j} C$ each be $G$-monomorphisms. Taking the pushout of $B/A \xleftarrow{} B \xrightarrow{j} C$ leads to a diagram of short exact sequences.
$$\begin{CD}
@. 0 @. 0 \\
@. @VVV @VVV \\
@. A @= A \\
@. @ViVV @VjiVV \\
0       @>>>    B    @>j>> C @>\pi>> C/B @>>> 0  \\
@.            @VVV   @VgVV @|     @.   \\
0       @>>>  B/A @>j'>> P    @>\pi'>> C/B @>>> 0\\
@. @VVV @VVV \\
@.   0  @.   0 \\
\end{CD}$$
Since the row $0 \xrightarrow{} B \xrightarrow{j} C \xrightarrow{\pi} C/B \xrightarrow{} 0$ is $G$-exact, we have by what was proved already that the pushout row $0 \xrightarrow{} B/A \xrightarrow{j'} P \xrightarrow{\pi'} C/B \xrightarrow{} 0$ must also be $G$-exact. So applying $\Hom_{\cat{G}}(G,-)$ yields a commutative diagram with exact rows and columns.
$$\begin{CD}
@. 0 @. 0 \\
@. @VVV @VVV \\
@. \Hom_{\cat{G}}(G,A) @= \Hom_{\cat{G}}(G,A) \\
@. @Vi_*VV @V(ji)_*VV \\
0       @>>>    \Hom_{\cat{G}}(G,B)    @>j_*>> \Hom_{\cat{G}}(G,C) @>\pi_*>> \Hom_{\cat{G}}(G,C/B) @>>> 0  \\
@.            @VVV   @Vg_*VV @|     @.   \\
0       @>>>  \Hom_{\cat{G}}(G,B/A) @>j'_*>> \Hom_{\cat{G}}(G,P)    @>\pi'_*>> \Hom_{\cat{G}}(G,C/B) @>>> 0\\
@. @VVV  \\
@.   0  @.  \\
\end{CD}$$
We are trying to show that $g_*$ is an epimorphism in $\text{Mod-}R$, and now the snake lemma shows that it is.
\end{proof}

We show in Appendix~\ref{appendix-proper classes} that when working in abelian categories, Quillen's notion of an \emph{exact category} from~\cite{quillen-algebraic K-theory} coincides with the notion of a \emph{proper class of short exact sequences} from~\cite[Chapter~XII.4]{homology}.

\begin{corollary}\label{cor-G-exacts form an exact category}
Let $\cat{G}$ be a Grothendieck category with generator $G$. Let $\class{E}$ denote the class of all $G$-exact sequences. Then $(\cat{G},\class{E})$ is an exact category. Equivalently, $\class{E}$ is a proper class of short exact sequences. We will let $\cat{G}_G = (\cat{G},\class{E})$ denote this exact category and we will call it the \textbf{G-exact category} of $\cat{G}$. The functor $\Hom_{\cat{G}}(G,-) : \cat{G}_G \xrightarrow{} \text{Mod-}R$ is exact.
\end{corollary}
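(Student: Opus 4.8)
The plan is to verify the axioms of a Quillen exact category directly for the pair $(\cat{G},\class{E})$, drawing all the structural input from Proposition~\ref{prop-G-exact sequences-properties}. Since the excerpt's Appendix establishes that an exact structure on an abelian category is the same thing as a proper class of short exact sequences, it suffices to check Quillen's (equivalently Keller's streamlined) axioms: (i) the class of admissible monomorphisms (here, $G$-monomorphisms) contains all isomorphisms and is closed under composition; dually for admissible epimorphisms; (ii) the class $\class{E}$ is closed under isomorphism and contains the split exact sequences; (iii) pushouts of admissible monomorphisms along arbitrary maps exist and are admissible monomorphisms; dually pullbacks of admissible epimorphisms exist and are admissible epimorphisms; (iv) (the obscure axiom, which in the abelian setting is automatic or follows from the others) if a map $f$ has a kernel and $f$ factors as an admissible epi followed by a monomorphism in a suitable way, then $f$ is an admissible epi — but in practice for abelian categories this is subsumed. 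Almost every clause is literally one of the four parts of Proposition~\ref{prop-G-exact sequences-properties}: part~(2) gives isomorphism-closure and split exactness, part~(3) gives the pushout/pullback stability, and part~(4) gives closure under composition.

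The one genuinely new assertion in the Corollary beyond re-packaging Proposition~\ref{prop-G-exact sequences-properties} is the final sentence: the functor $\Hom_{\cat{G}}(G,-) \colon \cat{G}_G \xrightarrow{} \text{Mod-}R$ is exact, meaning it carries conflations to conflations. But this is immediate — indeed tautological — from the very \emph{definition} of a $G$-exact sequence: $A \rightarrowtail B \twoheadrightarrow C$ is declared to be in $\class{E}$ precisely when $0 \to \Hom_{\cat{G}}(G,A) \to \Hom_{\cat{G}}(G,B) \to \Hom_{\cat{G}}(G,C) \to 0$ is a short exact sequence in $\text{Mod-}R$. So I would simply point this out in one line.

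Concretely I would structure the proof as: first invoke the Appendix to reduce ``exact category'' to ``proper class of short exact sequences.'' Then run through the required closure properties, citing Proposition~\ref{prop-G-exact sequences-properties}(2) for identities/isomorphisms and split sequences, Proposition~\ref{prop-G-exact sequences-properties}(4) for closure of $G$-monos and $G$-epis under composition, and Proposition~\ref{prop-G-exact sequences-properties}(3) for the pushout and pullback axioms (and for the fact that the relevant squares are bicartesian). One should also note that $G$-exact sequences are genuine short exact sequences in $\cat{G}$ by Proposition~\ref{prop-G-exact sequences-properties}(1), so that kernels of $G$-epimorphisms and cokernels of $G$-monomorphisms exist and again sit inside $G$-exact sequences; combined with the ambient abelianness this disposes of any remaining axiom (such as the obscure axiom, which Keller and B\"uhler show is automatic once the others hold). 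Finally, the exactness of $\Hom_{\cat{G}}(G,-)$ is just the definition.

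I do not anticipate a serious obstacle: the Corollary is essentially a bookkeeping consequence of Proposition~\ref{prop-G-exact sequences-properties}, and the only mild subtlety is making sure that the list of axioms one checks is a \emph{complete} axiomatization — this is why the explicit appeal to B\"uhler~\cite{buhler-exact categories} (and the paper's own Appendix) for the redundancy of the obscure axiom in the abelian case is the cleanest route. If one wanted to avoid citing the redundancy, one could instead verify the obscure axiom directly: given $g \colon B \to C$ that admits a kernel and such that $g = g_2 g_1$ with $g_1$ an admissible epi, apply $\Hom_{\cat{G}}(G,-)$, use that it preserves kernels (being a right adjoint, cf.\ Proposition~\ref{prop-G-exact sequences-properties}(3)) and that $\cat{S} \cong \cat{G}$ is abelian, and transport the conclusion back along the equivalence; but this is more work than necessary.
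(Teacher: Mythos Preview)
Your proposal is correct and matches the paper's own proof essentially line for line: the paper simply observes that the four properties of Proposition~\ref{prop-G-exact sequences-properties} are precisely B\"uhler's axioms for an exact category, that the exactness of $\Hom_{\cat{G}}(G,-)$ is immediate from the definition of $G$-exact, and refers to Appendix~\ref{appendix-proper classes} for the equivalence with proper classes. Your extra discussion of the obscure axiom is unnecessary since B\"uhler's axiomatization (which the paper cites) already omits it as redundant, but otherwise you have exactly the intended argument.
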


\begin{proof}
The four properties of Proposition~\ref{prop-G-exact sequences-properties} are the axioms of an exact category in~\cite{buhler-exact categories}. It is clear from definitions that the functor $\Hom_{\cat{G}}(G,-) : \cat{G}_G \xrightarrow{} \text{Mod-}R$ is exact. We refer the reader to Appendix~\ref{appendix-proper classes} for the equivalence with proper classes.
\end{proof}

The generator $G = \oplus_{i \in I} G_i$ is not just a generator for $\cat{G}$. It is easy to see that it is also a generator for $\cat{G}_G$, but we first explain what we mean by this.

In~\cite{hovey}, Hovey worked with abelian categories along with a proper class of short exact sequences in the sense of~\cite[Chapter~XII.4]{homology}. There he defined an object $U$ to be a generator for a proper class $\class{P}$ if for all maps $f$, $\Hom_{\cat{G}}(U,f)$ surjective implies $f$ is a $\class{P}$-epimorphism. Also here, a set $\{U_i\}$ generates $\class{P}$ if $U = \oplus U_i$ is a generator for $\class{P}$. On the other hand, in~\cite{saorin-stovicek} and~\cite{stovicek-exact model cats}, the authors work with exact categories and define a set $\{U_i\}$ to be generating if for any object $A$, there is an admissible epimorphism $\pi : U \twoheadrightarrow A$ where $U$ is some set-indexed direct sum of objects from  $\{U_i\}$. The following corollary shows that $G$ is a generator for $\cat{G}_G$ in both senses. We therefore can feel free to reference the above authors' results.

\begin{corollary}\label{cor-G-projective generator}
$G = \oplus_{i \in I} G_i$ is a projective generator for the $G$-exact category $\cat{G}_G$. In particular, the following hold:
\begin{enumerate}
\item By definition, an object $P$ is projective in $\cat{G}_G$ if the functor $\Hom_{\cat{G}}(P,-)$ takes $G$-exact sequences to short exact sequences. We will call such an object \textbf{$\boldsymbol{G}$-projective}. Notice that the construction of the $G$-exact category immediately forces $G$ and each $G_i$ to be $G$-projective.
\item $G$ is a generator for $\cat{G}_G$. That is, if $\Hom_{\cat{G}}(G,A) \xrightarrow{f_*} \Hom_{\cat{G}}(G,B)$ is surjective, then $f$ is a $G$-epimorphism.

\noindent Equivalently, $\{G_i\}$ is a set of generators for $\cat{G}_G$. That is, if $\Hom_{\cat{G}}(G_i,A) \xrightarrow{f_*} \Hom_{\cat{G}}(G_i,B)$ is surjective for all $G_i$, then $f$ is a $G$-epimorphism.

\item $\cat{G}_G$ has enough projectives. In particular, for each $A \in \cat{G}$, we can find a $G$-epimorphism $\oplus_{i \in I} G \twoheadrightarrow A$. Equivalently, we can find a $G$-epimorphism $X \twoheadrightarrow A$ where $X$ is a direct sum of copies of some of the $G_i$.

\item An object $P$ is $G$-projective if and only if it is a direct summand of a direct sum of copies of some of the $G_i$.
\end{enumerate}
\end{corollary}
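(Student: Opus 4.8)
The plan is to reduce the whole statement to one observation together with Proposition~\ref{prop-G-exact sequences-properties}. The observation is that, because $G = \bigoplus_{i \in I} G_i$, there is a natural isomorphism $\Hom_{\cat{G}}(G,-) \cong \prod_{i \in I} \Hom_{\cat{G}}(G_i,-)$, and a sequence of abelian groups indexed over a product is short exact precisely when each factor is. For part (1), the case of $G$ itself is just the \emph{definition} of $\cat{G}_G$: a short exact sequence lies in $\class{E}$ exactly when $\Hom_{\cat{G}}(G,-)$ carries it to a short exact sequence, which is literally the assertion that $G$ is projective in $\cat{G}_G$. For a fixed $G_i$, if $S \rightarrowtail B \twoheadrightarrow C$ is $G$-exact, then $\prod_i \Hom_{\cat{G}}(G_i,-)$ sends it to a short exact sequence, so each factor $0 \to \Hom_{\cat{G}}(G_i,S) \to \Hom_{\cat{G}}(G_i,B) \to \Hom_{\cat{G}}(G_i,C) \to 0$ is short exact; hence $G_i$ is $G$-projective.

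For part (2), suppose $f : A \to B$ with $f_* : \Hom_{\cat{G}}(G,A) \to \Hom_{\cat{G}}(G,B)$ surjective. I would first check $f$ is an epimorphism in $\cat{G}$: writing $\pi : B \to \cok f$, we have $\pi_* f_* = (\pi f)_* = 0$, and surjectivity of $f_*$ forces $\pi_* = 0$, whence $\pi = 0$ since $\Hom_{\cat{G}}(G,-)$ is faithful. Then $0 \to \ker f \to A \xrightarrow{f} B \to 0$ is exact in $\cat{G}$, and applying the left-exact functor $\Hom_{\cat{G}}(G,-)$ gives $0 \to \Hom_{\cat{G}}(G,\ker f) \to \Hom_{\cat{G}}(G,A) \xrightarrow{f_*} \Hom_{\cat{G}}(G,B) \to 0$, exact by hypothesis, so $f$ is a $G$-epimorphism. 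The reformulation with $\{G_i\}$ is again the product observation: $f_*$ is surjective on $\Hom_{\cat{G}}(G,-)$ iff it is surjective on every $\Hom_{\cat{G}}(G_i,-)$.

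For part (3), given $A \in \cat{G}$ I would take $X = \bigoplus_{\varphi \in \Hom_{\cat{G}}(G,A)} G$ with $\varepsilon : X \to A$ the canonical map restricting to $\varphi$ on the $\varphi$-th summand (or $X = \bigoplus_{i \in I}\bigoplus_{s \in \Hom_{\cat{G}}(G_i,A)} G_i$ for the other formulation). Each summand inclusion $\iota_\varphi : G \to X$ satisfies $\varepsilon \iota_\varphi = \varphi$, so $\varepsilon_*$ is surjective and $\varepsilon$ is a $G$-epimorphism by part (2); one passes between the two formulations using Proposition~\ref{prop-G-exact sequences-properties}(2),(4), since a split epimorphism is a $G$-epimorphism and $G$-epimorphisms compose. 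For part (4), the easy direction uses part (1) plus $\Hom_{\cat{G}}\bigl(\bigoplus_j P_j,-\bigr) \cong \prod_j \Hom_{\cat{G}}(P_j,-)$: this sends $G$-exact sequences to products of short exact sequences, which are short exact, so any direct sum of copies of the $G_i$ is $G$-projective, and the same product argument applied to $\Hom_{\cat{G}}(P,-)\times\Hom_{\cat{G}}(Q,-)$ shows a direct summand of a $G$-projective is $G$-projective. Conversely, if $P$ is $G$-projective, choose by part (3) a $G$-epimorphism $\pi : X \twoheadrightarrow P$ with $X$ a direct sum of copies of the $G_i$; applying the exact functor $\Hom_{\cat{G}}(P,-)$ (exact since $P$ is $G$-projective) to the $G$-exact sequence $\ker \pi \rightarrowtail X \twoheadrightarrow P$ shows $\pi_*$ is onto $\Hom_{\cat{G}}(P,P)$, so $\mathrm{id}_P$ lifts through $\pi$ and $P$ is a direct summand of $X$.

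I do not expect a genuine obstacle: the argument is bookkeeping around Proposition~\ref{prop-G-exact sequences-properties} and the product decomposition of $\Hom_{\cat{G}}(G,-)$. The one point to handle with care is that $\Hom_{\cat{G}}(G,-)$ need not commute with coproducts in a general Grothendieck category, so one must not argue that a coproduct of $G$-projectives is $G$-projective by testing with $G$; the correct move is to test with the coproduct object itself and invoke the fact that products of short exact sequences of abelian groups are short exact.
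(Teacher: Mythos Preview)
Your proposal is correct and follows essentially the same approach as the paper: for (2) you both reduce to showing $f$ is an epimorphism and then read off $G$-exactness from the definition, for (3) you both use the canonical evaluation map $\bigoplus_{\varphi} G \to A$, and for (4) you both split the $G$-epimorphism from (3). The only cosmetic difference is that the paper cites \cite[Corollaries~11.6--11.7]{buhler-exact categories} for the general exact-category facts that direct sums of projectives are projective and that an admissible epimorphism onto a projective splits, whereas you write out these short arguments by hand; your closing caveat about not testing coproducts with $G$ is well taken and is exactly why the paper (and B\"uhler) argue via the coproduct object itself.
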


\begin{proof}
For (2), let $f : A \xrightarrow{} B$ be such that $\Hom_{\cat{G}}(G,A) \xrightarrow{f_*} \Hom_{\cat{G}}(G,B)$ is surjective. Since $G$ is a generator for $\cat{G}$ this implies $f$ is an epimorphism and so there is a short exact sequence $0 \xrightarrow{} K \xrightarrow{} A \xrightarrow{f} B \xrightarrow{} 0$. By definition this sequence is $G$-exact, so we are done. In terms of the generating set $\{G_i\}$, just note that $\Hom_{\cat{G}}(G,A) \xrightarrow{f_*} \Hom_{\cat{G}}(G,B)$ is surjective iff $\Hom_{\cat{G}}(G_i,A) \xrightarrow{f_*} \Hom_{\cat{G}}(G_i,B)$ is surjective for all $G_i$.

For (3), in the usual way, take $I = \Hom_{\cat{G}}(G,A)$, and define $\oplus_{t \in I} G \twoheadrightarrow A$ in component $(t : G \xrightarrow{} A) \in I$ to be $t$ itself. It is immediate that this is a $G$-epimorphism. $\oplus_{t \in I} G$ is indeed a $G$-projective object, since in any exact category, direct sums of projectives are again projectives by~\cite[Corollary~11.7]{buhler-exact categories}. For (4), we see that the $G$-epimorphism $\oplus_{t \in I} G \twoheadrightarrow P$ splits if and only if $P$ is $G$-projective by~\cite[Corollary~11.6]{buhler-exact categories}.
\end{proof}

\subsection{$\boldsymbol{G}$-subobjects.}

Here we go on to list more properties of $G$-monomorphisms, but we state them in terms of $G$-subobjects. This is the form in which we will use them later. Note that they are analogous to properties of pure submodules. Recall that we write $P \subseteq_G A$ to mean that $P$ is a $G$-subobject of $A$, that is, $\Hom_{\cat{G}}(G,A) \xrightarrow{} \Hom_{\cat{G}}(G,A/P)$ is surjective.

\begin{proposition}\label{prop-G-purity properties}
Consider subobject $A \subseteq B \subseteq C$ in $\cat{G}$.
\begin{enumerate}
\item If $A \subseteq_G B$ and $B \subseteq_G C$ then $A \subseteq_G C$.
\item If $A \subseteq_G C$ then $A \subseteq_G B$.
\item If $A \subseteq_G C$ and $B/A \subseteq_G C/A$ then $B \subseteq_G C$.
\end{enumerate}
\end{proposition}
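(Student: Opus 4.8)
The plan is to derive all three parts from the closure properties of $G$-monomorphisms and $G$-epimorphisms recorded in Proposition~\ref{prop-G-exact sequences-properties}, together with the fact that $\Hom_{\cat{G}}(G,-)$, being a right adjoint, preserves pullbacks. Throughout I use that $A \subseteq_G B$ means exactly that $0 \to A \to B \to B/A \to 0$ is $G$-exact, equivalently (since $\Hom_{\cat{G}}(G,-)$ is always left exact) that $\Hom_{\cat{G}}(G,B) \to \Hom_{\cat{G}}(G,B/A)$ is surjective. Part (1) is then immediate: the inclusion $A \hookrightarrow C$ is the composite of the $G$-monomorphisms $A \hookrightarrow B$ and $B \hookrightarrow C$, hence a $G$-monomorphism by Proposition~\ref{prop-G-exact sequences-properties}(4), i.e. $A \subseteq_G C$.

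For part (2) I would first observe that $B$ is the pullback of $C \twoheadrightarrow C/A \hookleftarrow B/A$, since $B$ is precisely the preimage of the subobject $B/A$ under the quotient map $C \to C/A$. As $\Hom_{\cat{G}}(G,-)$ preserves pullbacks, $\Hom_{\cat{G}}(G,B) \cong \Hom_{\cat{G}}(G,C) \times_{\Hom_{\cat{G}}(G,C/A)} \Hom_{\cat{G}}(G,B/A)$. Given $x \in \Hom_{\cat{G}}(G,B/A)$, the hypothesis $A \subseteq_G C$ gives surjectivity of $\Hom_{\cat{G}}(G,C) \to \Hom_{\cat{G}}(G,C/A)$, so the image of $x$ in $\Hom_{\cat{G}}(G,C/A)$ lifts to some $y \in \Hom_{\cat{G}}(G,C)$; then the pair $(y,x)$ lies in the pullback and maps to $x$. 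Thus $\Hom_{\cat{G}}(G,B) \to \Hom_{\cat{G}}(G,B/A)$ is surjective, i.e. $A \subseteq_G B$.

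For part (3) I would factor the quotient map $C \twoheadrightarrow C/B$ as $C \twoheadrightarrow C/A \twoheadrightarrow C/B$. The first map is a $G$-epimorphism because $A \subseteq_G C$; the second is a $G$-epimorphism because, identifying $(C/A)/(B/A)$ with $C/B$, the sequence $0 \to B/A \to C/A \to C/B \to 0$ is $G$-exact by the hypothesis $B/A \subseteq_G C/A$. By Proposition~\ref{prop-G-exact sequences-properties}(4) the composite $C \to C/B$ is then a $G$-epimorphism, and since its kernel is $B$ the sequence $0 \to B \to C \to C/B \to 0$ is $G$-exact, which is exactly $B \subseteq_G C$.

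The only step needing any care is part (2), where one must correctly see $B$ as a pullback in $\cat{G}$ and invoke preservation of pullbacks by the right adjoint $\Hom_{\cat{G}}(G,-)$; everything else is a direct application of the ``closed under composition'' clauses of Proposition~\ref{prop-G-exact sequences-properties}. One could alternatively handle (2) by a short diagram chase on the map of short exact sequences $[0 \to A \to B \to B/A \to 0] \to [0 \to A \to C \to C/A \to 0]$ after applying $\Hom_{\cat{G}}(G,-)$, but identifying the pullback is cleaner and avoids invoking the Gabriel--Popescu picture of $\cat{S}$.
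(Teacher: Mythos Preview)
Your proof is correct. Parts (1) and (3) match the paper's argument essentially verbatim: the paper also cites Proposition~\ref{prop-G-exact sequences-properties}(4) for (1), and for (3) it too observes that $\Hom_{\cat{G}}(G,C) \to \Hom_{\cat{G}}(G,C/B)$ factors as the composite of the two surjections through $\Hom_{\cat{G}}(G,C/A)$, which is just your composition-of-$G$-epimorphisms argument phrased at the level of Hom groups.

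The only genuine difference is in (2). The paper does not argue directly: it simply invokes the general fact (valid in any weakly idempotent complete exact category, citing \cite[Prop.~7.6 or Prop.~2.16]{buhler-exact categories}) that if a composite $A \to B \to C$ is an admissible monomorphism then so is the first factor $A \to B$. Your argument instead identifies $B$ as the pullback of $C \twoheadrightarrow C/A \hookleftarrow B/A$ and uses that $\Hom_{\cat{G}}(G,-)$ preserves pullbacks to lift elements explicitly. Your route is more elementary and self-contained---it stays entirely inside the concrete description of $G$-exactness via surjectivity of $\Hom_{\cat{G}}(G,-)$ and avoids appealing to the abstract exact-category machinery---while the paper's route is shorter and highlights that (2) is really a formal consequence of $\cat{G}_G$ being an exact structure on an abelian (hence weakly idempotent complete) category. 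Either is perfectly adequate here.
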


\begin{proof}
(1) has already appeared as part~(4) of Proposition~\ref{prop-G-exact sequences-properties}. (2) follows from general facts about admissible monomorphisms in (weakly idempotent complete) exact categories. See~\cite[Prop.~7.6 or Prop.~2.16]{buhler-exact categories}.

For (3), all we need to check is that the map $\Hom_{\cat{G}}(G,C) \xrightarrow{} \Hom_{\cat{G}}(G,C/B)$ is an epimorphism. But this is just the composite $$\Hom_{\cat{G}}(G,C) \xrightarrow{} \Hom_{\cat{G}}(G,C/A) \xrightarrow{} \Hom_{\cat{G}}(G,(C/A)/(B/A)) \cong \Hom_{\cat{G}}(G,C/B),$$ and these are epimorphisms by hypothesis.
\end{proof}

\section{The $G$-derived category}\label{sec-the G-derived category of a locally finitely presented Grothendieck category}

Again let $\cat{G}$ be a Grothendieck category and let $G = \oplus G_i$ where $\{G_i\}$ is a set of generators. In this section we construct the derived category $\class{D}(G)$. It is the derived category of the $G$-exact category $\cat{G}_G$ and we obtain it by putting a suitable model structure on $\cha{G}$. Following the general definition of an exact chain complex from~\cite[Definition~10.1]{buhler-exact categories}, the exact complexes in $\cat{G}_G$ are the $G$-acyclic complexes. That is, those chain complexes $X$ for which $\Hom_{\cat{G}}(G,X)$ is exact. So we wish to ``kill'' these complexes by making them the trivial objects of an exact model structure.

\subsection{The category $\boldsymbol{\text{Ch}(\cat{G})_G}$.}

Our convention when working with chain complexes is that the differential lowers degree, so $\cdots
\xrightarrow{} X_{n+1} \xrightarrow{d_{n+1}} X_{n} \xrightarrow{d_n}
X_{n-1} \xrightarrow{} \cdots$ is a chain complex. Given $X \in \cha{G}$, the
\emph{$n$th suspension of $X$}, denoted $\Sigma^n X$, is the complex given by
$(\Sigma^n X)_{k} = X_{k-n}$ and $(d_{\Sigma^n X})_{k} = (-1)^nd_{k-n}$.
Given two chain complexes $X$ and $Y$ we define $\homcomplex(X,Y)$ to
be the complex of abelian groups $ \cdots \xrightarrow{} \prod_{k \in
\Z} \Hom(X_{k},Y_{k+n}) \xrightarrow{\delta_{n}} \prod_{k \in \Z}
\Hom(X_{k},Y_{k+n-1}) \xrightarrow{} \cdots$, where $(\delta_{n}f)_{k}
= d_{k+n}f_{k} - (-1)^n f_{k-1}d_{k}$.
This gives a functor
$\homcomplex(X,-) \mathcolon \cha{A} \xrightarrow{} \textnormal{Ch}(\Z)$. Note that this functor takes exact sequences to left exact sequences,
and it is exact if each $X_{n}$ is projective. Similarly the contravariant functor $\homcomplex(-,Y)$ sends exact sequences to left exact sequences and is exact if each $Y_{n}$ is injective. It is an exercise to check that the homology satisfies $H_n[Hom(X,Y)] = \cha{G}(X,\Sigma^{-n} Y)/\sim$ where $\sim$ is the usual relation of chain homotopic maps.

For a given $A \in \cat{G}$, we denote the \emph{$n$-disk on $A$} by $D^n(A)$. This is the complex consisting only of $A \xrightarrow{1_A} A$ concentrated in degrees $n$ and $n-1$. We denote the \emph{$n$-sphere on $A$} by $S^n(A)$, and this is the complex consisting of $A$ in degree $n$ and 0 elsewhere.

Recall that $\cat{G}_G$ is the same category as $\cat{G}$, with the same morphisms, but with an exact structure coming from the proper class of $G$-exact sequences. In the same way, we let $\cha{G}_G$ denote the category of all chain complexes, with the usual chain maps, but considered as an exact category where the short exact sequences are $G$-exact in each degree. We will call these \emph{degreewise $G$-exact sequences}. It is indeed a general fact that for any exact category $\cat{A} = (\cat{A},\class{E})$, the category $\cha{A}$ becomes an exact category when considered along with the short exact sequences which degreewise lie in $\class{E}$. So one might argue that the proper notation in our case is $\textnormal{Ch}(\cat{G}_G)$, rather than $\cha{G}_G$. However, we have the following lemma.

\begin{lemma}\label{lemma-generators in Ch(G)}
Consider the standard generating set $\{D^n(G_i)\}$ in $\cha{G}$ and let $G = \oplus D^n(G_i)$ be the direct sum, taken over all $n \in \Z$ and $i \in I$. Then the $G$-exact category $\cha{G}_G$ of Corollary~\ref{cor-G-exacts form an exact category} coincides with $\textnormal{Ch}(\cat{G}_G)$. That is, the proper class of $G$-exact sequences in $\cha{G}$ (here $G = \oplus D^n(G_i)$) coincides with the class of all short exact sequences which degreewise are $G$-exact sequences (here $G = \oplus G_i$) in $\cat{G}$.
\end{lemma}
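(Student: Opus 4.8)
The plan is to reduce everything to the standard adjunction identifying chain maps out of a disk complex with single morphisms of $\cat{G}$. First I would record the natural isomorphism $\Hom_{\cha{G}}(D^n(A),Y) \cong \Hom_{\cat{G}}(A,Y_n)$, natural in $Y \in \cha{G}$: a chain map $D^n(A) \to Y$ is uniquely determined by its degree-$n$ component $A \to Y_n$ (the degree-$(n-1)$ component is then forced to be $d^Y_n$ composed with it, and all other components vanish), and conversely every morphism $A \to Y_n$ arises this way. Combining this with the canonical isomorphism $\Hom\bigl(\bigoplus_\alpha U_\alpha,-\bigr) \cong \prod_\alpha \Hom(U_\alpha,-)$, I obtain for every $Y \in \cha{G}$ a natural isomorphism
\[
\Hom_{\cha{G}}\!\Bigl(\bigoplus_{n,i} D^n(G_i),\,Y\Bigr) \;\cong\; \prod_{n,i}\Hom_{\cat{G}}(G_i,Y_n) \;\cong\; \prod_{n\in\Z}\Hom_{\cat{G}}(G,Y_n),
\]
where the last step again uses $\Hom_{\cat{G}}\bigl(\bigoplus_i G_i,-\bigr) \cong \prod_i \Hom_{\cat{G}}(G_i,-)$ together with $G = \bigoplus_i G_i$.

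Next I would feed a short exact sequence of complexes $0 \to X \to Y \to Z \to 0$ into this. By naturality, applying $\Hom_{\cha{G}}\bigl(\bigoplus_{n,i} D^n(G_i),-\bigr)$ yields a sequence of abelian groups isomorphic to
\[
0 \to \prod_{n}\Hom_{\cat{G}}(G,X_n) \to \prod_{n}\Hom_{\cat{G}}(G,Y_n) \to \prod_{n}\Hom_{\cat{G}}(G,Z_n) \to 0.
\]
Since a product of a family of maps of abelian groups is injective (resp.\ surjective) exactly when every factor is, a product of sequences $A\to B\to C$ with vanishing composite is short exact if and only if every factor sequence is short exact. Hence the displayed sequence is short exact if and only if $0 \to \Hom_{\cat{G}}(G,X_n) \to \Hom_{\cat{G}}(G,Y_n) \to \Hom_{\cat{G}}(G,Z_n) \to 0$ is short exact for every $n$, i.e.\ if and only if $0 \to X \to Y \to Z \to 0$ is degreewise $G$-exact in $\cat{G}$ (with $G = \bigoplus_i G_i$). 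By definition, the left-hand side of this equivalence says exactly that $0 \to X \to Y \to Z \to 0$ is a $G$-exact sequence in $\cha{G}$ (with $G = \bigoplus_{n,i} D^n(G_i)$), so this is precisely the asserted identification of the two proper classes.

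For the bookkeeping I would only remark that both classes being compared genuinely consist of short exact sequences of complexes: a $G$-exact sequence in $\cha{G}$ is an exact sequence by Proposition~\ref{prop-G-exact sequences-properties}(1), hence a short exact sequence of complexes and thus degreewise short exact in $\cat{G}$; and a degreewise $G$-exact sequence is a short exact sequence of complexes by hypothesis. I do not anticipate any real obstacle here; the only points that call for a little care are the implication ``product of sequences short exact $\Rightarrow$ each factor short exact'' and the harmless reindexing of the double product over $n \in \Z$ and $i \in I$.
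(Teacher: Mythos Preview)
Your proof is correct and follows essentially the same approach as the paper's own proof: both reduce to the disk adjunction $\Hom_{\cha{G}}(D^n(G_i),Y)\cong\Hom_{\cat{G}}(G_i,Y_n)$, pass to the product over $n,i$ via $\Hom(\oplus,-)\cong\prod\Hom(-,-)$, and conclude that exactness of the product is equivalent to degreewise $G$-exactness. Your version is slightly more detailed (explicitly justifying the disk adjunction and the product-of-exact-sequences step), but there is no substantive difference in strategy.
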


\begin{proof}
Consider a short sequence $X \rightarrowtail Y \twoheadrightarrow Z$ of complexes. Then it is $G$-exact iff $$\Hom_{\cha{G}}(G,X) \rightarrowtail \Hom_{\cha{G}}(G,Y) \twoheadrightarrow \Hom_{\cha{G}}(G,Z)$$ is a short exact sequence of abelian groups, iff $$\prod_{n,i} \Hom(D^n(G_i),X) \rightarrowtail \prod_{n,i} \Hom(D^n(G_i),Y) \twoheadrightarrow \prod_{n,i} \Hom(D^n(G_i),Z)$$ is short exact, iff $\prod_{n,i} \Hom_{G}(G_i,X_n) \rightarrowtail \prod_{n,i} \Hom_{G}(G_i,Y_n) \twoheadrightarrow \prod_{n,i} \Hom_{G}(G_i,Z_n)$ is short exact, iff $X \rightarrowtail Y \twoheadrightarrow Z$ is degreewise $G$-exact (where here $G = \oplus G_i$) in $\cat{G}$.
\end{proof}

Being an exact category, $\cha{G}_G$ comes with a Yoneda Ext group, which in this case is the group of (equivalence classes of) degreewise $G$-exact sequences $Y \rightarrowtail Z \twoheadrightarrow X$, with addition defined by the Baer sum. We will denote this bifunctor by $\GExt^1_{\cha{G}}$, and note that for given chain complexes $X$ and $Y$, $\GExt^1_{\cha{G}}(X,Y)$ is a subgroup of the usual Yoneda $\Ext^1_{\cha{G}}(X,Y)$.  We sometimes will also call an element of $\GExt^1_{\cha{G}}(X,Y)$ a \textbf{degreewise $\boldsymbol{G}$-extension}. We also denote by $\GExt^1_{\cat{G}}$, the group of \textbf{$\boldsymbol{G}$-extensions} in the ground category $\cat{G}_G$. We have the following $G$-versions of standard isomorphisms.

\begin{lemma}\label{lemma-disk adjunctions}
Let $A \in \cat{G}$ and $X \in \cha{G}$. Then we have the following natural isomorphisms.
\begin{enumerate}
\item $\GExt^1_{\cha{G}}(D^n(A),X) \cong \GExt^1_{\cat{G}}(A,X_n)$

\item $\GExt^1_{\cha{G}}(X,D^{n+1}(A)) \cong \GExt^1_{\cat{G}}(X_n,A)$
\end{enumerate}

\end{lemma}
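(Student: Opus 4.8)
The plan is to establish the two isomorphisms exactly as one does in the classical (non-relative) setting, but checking at each step that the constructions respect $G$-exactness. For part (1), I would construct mutually inverse maps between $\GExt^1_{\cha{G}}(D^n(A),X)$ and $\GExt^1_{\cat{G}}(A,X_n)$. Given a degreewise $G$-exact sequence $X \rightarrowtail Z \twoheadrightarrow D^n(A)$, restricting to degree $n$ and taking the kernel of the composite $Z_n \to (D^n A)_n = A$ with $d^{D^n A}$ — more precisely, pulling back along the inclusion $S^n(A) \hookrightarrow D^n(A)$, or directly extracting the degree-$n$ component and passing to $A = \operatorname{coker}$ appropriately — yields a short exact sequence $X_n \rightarrowtail ? \twoheadrightarrow A$ in $\cat{G}$, which is $G$-exact because the original was $G$-exact in degree $n$. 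Conversely, given a $G$-exact sequence $X_n \rightarrowtail B \twoheadrightarrow A$ in $\cat{G}$, one builds a complex $Z$ using $B$ in degrees $n$ and $n-1$ with appropriate differentials (the standard "thickening" construction) so that $X \rightarrowtail Z \twoheadrightarrow D^n(A)$ is degreewise $G$-exact; the only nontrivial degrees are $n$ and $n-1$, and in degree $n-1$ the relevant sequence is $X_{n-1} \rightarrowtail X_{n-1} \oplus A \twoheadrightarrow A$ or similar, which is split, hence $G$-exact by Proposition~\ref{prop-G-exact sequences-properties}(2). Part (2) is dual: one works with $D^{n+1}(A)$, which has $A$ in degrees $n+1$ and $n$, and a degreewise $G$-exact sequence $D^{n+1}(A) \rightarrowtail Z \twoheadrightarrow X$ restricts in degree $n$ to give the desired $G$-extension $A \rightarrowtail ? \twoheadrightarrow X_n$.

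A cleaner way to organize this, which I would probably adopt, is to reduce to the known module-category isomorphisms via the exact functor $\Hom_{\cat{G}}(G,-) \colon \cat{G}_G \to \text{Mod-}R$ from Corollary~\ref{cor-G-exacts form an exact category}, together with the identification of $\cha{G}_G$ with $\text{Ch}(\cat{G}_G)$ from Lemma~\ref{lemma-generators in Ch(G)}. Since $\GExt^1$ in an exact category is computed by Yoneda equivalence classes of admissible extensions, and since $\Hom_{\cat{G}}(G,-)$ restricted to the image $\cat{S}$ is an equivalence of abelian categories sending $G$-exact sequences to (genuinely) exact sequences, the groups $\GExt^1_{\cat{G}}(A,B)$ and $\GExt^1_{\cha{G}}(X,Y)$ can be identified with honest $\Ext^1$ groups in $\text{Ch}(\cat{S})$ and $\cat{S}$ respectively. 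There the classical disk adjunction isomorphisms $\Ext^1_{\text{Ch}}(D^n(M),X) \cong \Ext^1(M, X_n)$ and $\Ext^1_{\text{Ch}}(X, D^{n+1}(M)) \cong \Ext^1(X_n, M)$ are standard, and naturality is inherited. One must check that $\Hom_{\cat{G}}(G,-)$ carries $D^n(A)$ (as a complex over $\cat{G}$) to the disk $D^n(\Hom_{\cat{G}}(G,A))$ over $\cat{S}$, which is immediate since $\Hom_{\cat{G}}(G,-)$ is additive and preserves the identity map.

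The main obstacle — really the only point requiring care — is ensuring that the Yoneda-$\Ext$ computation genuinely transports across $\Hom_{\cat{G}}(G,-)$ without losing or gaining extension classes. The subtlety is that $\cat{S}$ is not an abelian \emph{sub}category of $\text{Mod-}R$, so one should phrase everything intrinsically: $\GExt^1_{\cat{G}_G}$ is the Yoneda Ext of the exact category $\cat{G}_G$, and the equivalence $\cat{G}_G \simeq \cat{S}$ (of exact, indeed abelian, categories, where $\cat{S}$ carries its own abelian exact structure) induces an isomorphism of Yoneda Ext functors. I would cite Corollary~\ref{cor-G-exacts form an exact category} for exactness of $\Hom_{\cat{G}}(G,-)$ and the surrounding discussion for the equivalence $\cat{G} \simeq \cat{S}$, then invoke the standard fact that equivalences of exact categories preserve Ext. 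Alternatively, if one prefers to stay inside $\cat{G}$, the obstacle becomes verifying that the explicit thickening construction of a complex $Z$ out of a $G$-extension of $X_n$ by $A$ produces something degreewise $G$-exact — but as noted, every degree other than $n$ contributes a split (hence $G$-exact) sequence, so this too is routine given Proposition~\ref{prop-G-exact sequences-properties}. Either route reduces the proof to the classical computation plus bookkeeping; no genuinely new difficulty arises from the relative setting.
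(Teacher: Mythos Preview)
Your direct approach (A) contains a genuine error: the claim that the inverse construction produces a \emph{split} sequence in degree $n-1$ is false. Starting from a $G$-extension $X_n \rightarrowtail B \twoheadrightarrow A$, the complex $Z$ one builds has $Z_n = B$, but $Z_{n-1}$ must be the pushout of $X_{n-1} \xleftarrow{d_n} X_n \rightarrowtail B$, and the resulting extension $X_{n-1} \rightarrowtail Z_{n-1} \twoheadrightarrow A$ is the pushforward of the original class along $d_n$; this need not vanish (take $d_n = \mathrm{id}$ and any non-split extension). The paper's argument is exactly this pushout construction, and the point you miss is that Proposition~\ref{prop-G-exact sequences-properties}(3) --- pushouts of $G$-monomorphisms are again $G$-monomorphisms --- is what guarantees the degree-$(n-1)$ sequence is $G$-exact. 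Part~(2) is handled dually via pullbacks of $G$-epimorphisms.

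Your alternative approach (B) via Gabriel--Popescu also has a gap. The equivalence $\cat{G} \simeq \cat{S}$ matches the \emph{abelian} structures, but $\cat{G}_G$ carries a strictly smaller class of short exact sequences. Under the equivalence, $\cat{G}_G$ corresponds to $\cat{S}$ equipped with the exact structure inherited from $\text{Mod-}R$ (sequences that are honestly short exact in $\text{Mod-}R$), \emph{not} the intrinsic abelian structure on $\cat{S}$. So you cannot simply invoke the classical abelian $\Ext$ computation; you would still need to verify the disk isomorphism for a proper exact structure, which again comes down to the same pushout/pullback closure property. In short, neither route bypasses Proposition~\ref{prop-G-exact sequences-properties}(3); that is the key input you are missing.
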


\begin{proof}
The point is that the standard isomorphisms take degreewise $G$-extensions to $G$-extensions. For example, for (1), the standard mapping $\Ext^1_{\cha{G}}(D^n(A),X) \xrightarrow{} \Ext^1_{\cat{G}}(A,X_n)$ takes a short exact sequence $0\xrightarrow{} X \xrightarrow{} Z \xrightarrow{} D^n(A) \xrightarrow{} 0$ to
$0\xrightarrow{} X_n \xrightarrow{} Z_n \xrightarrow{} A \xrightarrow{} 0$. Its inverse is formed by taking an extension $0\xrightarrow{} X_n \xrightarrow{} Z \xrightarrow{} A \xrightarrow{} 0$ and forming the pushout of $X_{n-1} \xleftarrow{d_n} X_n \xrightarrow{} Z$. Since pushouts of $G$-monomorphisms are again $G$-monomorphisms, we see that the isomorphisms restrict nicely between $G$-extensions. This shows (1). The isomorphism (2) is dual, using that pullbacks of $G$-epimorphisms are again $G$-epimorphisms.
\end{proof}

There is one more exact category that will be of use.
We denote by $\cha{G}_{dw}$ the category of all chain complexes along with the proper class of all degreewise split short exact sequences. We denote its Yoneda Ext bifunctor by $\Ext^1_{dw}$. We note that we have subgroup containments $$\Ext^1_{dw}(X,Y)  \subseteq \GExt^1_{\cha{G}}(X,Y) \subseteq \Ext^1_{\cha{G}}(X,Y) ,$$ and we have the following well-known connection between $\Ext^1_{dw}$ and the functor $\homcomplex$.

\begin{lemma}\label{lemma-homcomplex-basic-lemma}
For chain complexes $X$ and $Y$, we have isomorphisms:
$$\Ext^1_{dw}(X,\Sigma^{(-n-1)}Y) \cong H_n \homcomplex(X,Y) =
\cha{G}(X,\Sigma^{-n} Y)/\sim$$ In particular, for chain complexes $X$ and $Y$, $\homcomplex(X,Y)$ is
exact iff for any $n \in \Z$, any chain map $f \mathcolon \Sigma^nX \xrightarrow{} Y$ is
homotopic to 0 (or iff any chain map $f \mathcolon X \xrightarrow{} \Sigma^nY$ is homotopic
to 0).
\end{lemma}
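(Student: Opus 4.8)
The plan is to derive the first isomorphism $\Ext^1_{dw}(X,\Sigma^{-n-1}Y) \cong H_n\homcomplex(X,Y)$ directly from the concrete description of \emph{degreewise split} extensions, and then to obtain everything else by suspension bookkeeping together with the identity $H_n\homcomplex(X,Y) = \cha{G}(X,\Sigma^{-n}Y)/\sim$ that was already recorded (as an exercise) after the definition of $\homcomplex$.

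First I would fix a complex $Z$ and analyze an arbitrary degreewise split short exact sequence $0 \to Z \to E \to X \to 0$ representing a class in $\Ext^1_{dw}(X,Z)$. Choosing degreewise splittings identifies $E_k = Z_k \oplus X_k$, and compatibility with the inclusion of $Z$ and the projection to $X$ forces the differential to have the upper-triangular form $d_E = \left(\begin{smallmatrix} d_Z & \phi \\ 0 & d_X\end{smallmatrix}\right)$ for a family $\phi=(\phi_k)$ with $\phi_k\colon X_k \to Z_{k-1}$. Unravelling $d_E^2 = 0$ gives exactly $d_Z\phi + \phi d_X = 0$, which, by the formula for $\delta$, says precisely that $\phi$ is a cycle in degree $-1$ of $\homcomplex(X,Z)$, i.e. $\phi \in Z_{-1}\homcomplex(X,Z)$. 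Two such cochains $\phi,\phi'$ give equivalent extensions iff there is an isomorphism $E \to E'$ that is the identity on $Z$ and induces the identity on $X$; such a map must have the form $\left(\begin{smallmatrix} 1 & \psi \\ 0 & 1\end{smallmatrix}\right)$ with $\psi \in \homcomplex(X,Z)_0$, and commuting with the differentials forces $\phi - \phi' = d_Z\psi - \psi d_X = \delta_0(\psi)$. Hence $\phi \mapsto [\phi]$ descends to a bijection $\Ext^1_{dw}(X,Z) \cong H_{-1}\homcomplex(X,Z)$, and a routine check that it carries Baer sums to addition of homology classes upgrades it to a group isomorphism.

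It then remains to shift indices. From $(\Sigma^j Y)_k = Y_{k-j}$ and $(d_{\Sigma^j Y})_k = (-1)^j d_{k-j}$ one reads off that $\homcomplex(X,\Sigma^j Y)_m$ is canonically $\homcomplex(X,Y)_{m-j}$, with differentials agreeing up to the global sign $(-1)^j$, whence $H_m\homcomplex(X,\Sigma^j Y) \cong H_{m-j}\homcomplex(X,Y)$. Taking $Z = \Sigma^{-n-1}Y$ and $m=-1$, $j=-n-1$ turns the previous paragraph into $\Ext^1_{dw}(X,\Sigma^{-n-1}Y) \cong H_n\homcomplex(X,Y)$, and the second equality $H_n\homcomplex(X,Y) = \cha{G}(X,\Sigma^{-n}Y)/\sim$ is the exercise noted after the definition of $\homcomplex$: a cycle in degree $n$ is literally a chain map $X \to \Sigma^{-n}Y$ and a boundary is literally a null-homotopic one. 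For the ``in particular'' clause, $\homcomplex(X,Y)$ is exact iff $H_n\homcomplex(X,Y)=0$ for all $n$, iff every chain map $X \to \Sigma^{-n}Y$ is homotopic to $0$; applying the natural isomorphism $\cha{G}(X,\Sigma^{-n}Y) \cong \cha{G}(\Sigma^n X,Y)$, which respects chain homotopy, rephrases this as: every chain map $\Sigma^n X \to Y$ is homotopic to $0$.

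The only real care needed is sign bookkeeping — matching the conventions for $\delta$, for the suspension differential, and for chain homotopies so that cycles and boundaries of $\homcomplex$ correspond exactly to chain maps and null-homotopies (and to the twisting cochains $\phi$). None of this is deep: the substantive point, that everything is controlled degreewise precisely because the extensions in question are degreewise split, is immediate. Since the statement is standard, one may alternatively just cite it; the sketch above records why it holds in the present abelian category $\cha{G}$.
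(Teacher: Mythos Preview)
Your argument is correct and is the standard one: parametrize degreewise split extensions by twisting cochains, identify the cycle and boundary conditions in $\homcomplex(X,Z)_{-1}$, then shift by suspension. The paper itself gives no proof of this lemma at all --- it is introduced as ``the following well-known connection between $\Ext^1_{dw}$ and the functor $\homcomplex$'' and simply stated --- so there is nothing to compare against; your sketch supplies exactly the verification the paper elects to omit.
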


We note also that the functor
$\homcomplex(X,-) \mathcolon \cha{G} \xrightarrow{} \textnormal{Ch}(\Z)$ takes degreewise $G$-exact sequences to short exact sequences if each $X_{n}$ is $G$-projective. Similarly the contravariant functor $\homcomplex(-,Y)$ sends degreewise $G$-exact sequences to short exact sequences if each $Y_{n}$ is G-injective.

\subsection{$\boldsymbol{G}$-acyclic complexes.}

Following definition~\cite[Definition~10.1]{buhler-exact categories}), an acyclic chain complex with respect to the exact structure $\cat{G}_G$ ought to be a chain complex $X$ for which its differentials each factor as $X_n \twoheadrightarrow Z_{n-1}X \hookrightarrow X_{n-1}$ in such a way that $Z_nX \hookrightarrow X_n \twoheadrightarrow Z_{n-1}X$ is $G$-acyclic. We will call such a complex \emph{$G$-acyclic} (or \emph{$G$-exact}).

\begin{lemma}\label{lemma-properties G-exact}
We have the following properties of $G$-acyclic complexes.
\begin{enumerate}
\item Let $X$ be a chain complex. Then the following are equivalent:
\begin{enumerate}
\item $X$ is $G$-acyclic.
\item $X$ is exact and $Z_nX \subseteq_G X_n$ is a $G$-subobject for each $n$.
\item $\Hom_{\cat{G}}(G,X)$ is exact.
\item Each $\Hom_{\cat{G}}(G_i,X)$ is exact.
\end{enumerate}
Note in particular that any $G$-acyclic complex is exact in the usual sense.
\item If $X$ is contractible, meaning $1_X \sim 0$, then $X$ is $G$-acyclic.
\item The class of $G$-acyclic complexes is thick in $\cha{G}_G$. That is, it is closed under retracts and for any exact $X \rightarrowtail Y \twoheadrightarrow Z$ in $\cha{G}_G$, if two out of three terms are $G$-acyclic then so is the third.
\end{enumerate}
\end{lemma}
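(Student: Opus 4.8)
The strategy is to reduce everything to the single characterization ``$\Hom_{\cat{G}}(G,X)$ exact'' and then import standard facts about exactness in the module category $\text{Mod-}R$ via the exact functor $\Hom_{\cat{G}}(G,-)\colon \cat{G}_G \to \text{Mod-}R$ of Corollary~\ref{cor-G-exacts form an exact category}. For part~(1), the equivalence of (c) and (d) is immediate since $\Hom_{\cat{G}}(G,-) = \prod_i \Hom_{\cat{G}}(G_i,-)$ and a product of complexes of abelian groups is exact iff each factor is. The implication (a)$\Rightarrow$(c) is essentially the definition of an exact complex in the exact category $\cat{G}_G$ combined with the fact that $\Hom_{\cat{G}}(G,-)$ is an exact functor: applying it to the factorizations $X_n \twoheadrightarrow Z_{n-1}X \hookrightarrow X_{n-1}$ and the $G$-exact sequences $Z_nX \rightarrowtail X_n \twoheadrightarrow Z_{n-1}X$ yields short exact sequences of $R$-modules which splice together to show $\Hom_{\cat{G}}(G,X)$ is exact. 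For (c)$\Rightarrow$(b), Lemma~\ref{lemma-generators reflect exactness} already gives that $X$ is exact in the usual sense; then one must check $Z_nX \subseteq_G X_n$, i.e.\ that $\Hom_{\cat{G}}(G,X_n) \to \Hom_{\cat{G}}(G, X_n/Z_nX) \cong \Hom_{\cat{G}}(G, B_{n-1}X)$ is onto. Since $X$ is exact, $B_{n-1}X = Z_{n-1}X$, and exactness of $\Hom_{\cat{G}}(G,X)$ at spot $n-1$ together with $\Hom_{\cat{G}}(G,-)$ being left exact identifies the image of $\Hom_{\cat{G}}(G, X_n) \xrightarrow{(d_n)_*} \Hom_{\cat{G}}(G, X_{n-1})$ with $\ker[\Hom_{\cat{G}}(G,X_{n-1}) \xrightarrow{(d_{n-1})_*} \Hom_{\cat{G}}(G,X_{n-2})] = \Hom_{\cat{G}}(G, Z_{n-1}X)$, which is exactly the surjectivity we want. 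Finally (b)$\Rightarrow$(a): given (b), the differentials factor as $X_n \twoheadrightarrow Z_{n-1}X \hookrightarrow X_{n-1}$ since $X$ is exact, and the sequences $Z_nX \rightarrowtail X_n \twoheadrightarrow Z_{n-1}X$ are $G$-exact because $Z_nX \subseteq_G X_n$ is a $G$-subobject (and $G$-monomorphisms have $G$-epimorphism cokernels by the definition of the exact structure, using Proposition~\ref{prop-G-exact sequences-properties}); this is precisely the assertion that $X$ is $G$-acyclic.

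For part~(2), if $1_X \sim 0$ then applying the additive functor $\Hom_{\cat{G}}(G,-)$ to the contracting homotopy gives a contracting homotopy for the complex of $R$-modules $\Hom_{\cat{G}}(G,X)$, so that complex is contractible, hence exact; now invoke the equivalence (c)$\Leftrightarrow$(a) from part~(1).

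For part~(3), I would work throughout with criterion (c). Thickness of the class of exact complexes is a general fact about exact categories (see~\cite[Section~10]{buhler-exact categories}); here I want the $G$-acyclic complexes to be thick \emph{in} $\cha{G}_G$. Since $\Hom_{\cat{G}}(G,-)\colon \cha{G}_G \to \text{Ch}(\text{Mod-}R)$ is an exact functor (it is exact on $\cat{G}_G$ degreewise, hence on complexes with the degreewise-$G$-exact structure — using Lemma~\ref{lemma-generators in Ch(G)}), a degreewise $G$-exact sequence $X \rightarrowtail Y \twoheadrightarrow Z$ of complexes is sent to a short exact sequence $\Hom_{\cat{G}}(G,X) \rightarrowtail \Hom_{\cat{G}}(G,Y) \twoheadrightarrow \Hom_{\cat{G}}(G,Z)$ of complexes of $R$-modules, and an exact sequence $X\rightarrowtail Y\twoheadrightarrow Z$ in $\cha{G}_G$ in the sense of the exact structure likewise yields a short exact sequence of complexes in $\text{Ch}(\text{Mod-}R)$. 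The long exact homology sequence then shows that if two of $\Hom_{\cat{G}}(G,X)$, $\Hom_{\cat{G}}(G,Y)$, $\Hom_{\cat{G}}(G,Z)$ are exact so is the third; by (c)$\Leftrightarrow$(a) this transfers back to $G$-acyclicity. Closure under retracts: a retract diagram in $\cha{G}_G$ maps under $\Hom_{\cat{G}}(G,-)$ to a retract diagram in $\text{Ch}(\text{Mod-}R)$, and a retract of an exact complex of modules is exact (homology is additive on retracts), so again apply (c)$\Leftrightarrow$(a).

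The main obstacle I anticipate is the careful bookkeeping in (c)$\Rightarrow$(b): one must be precise about which ``cycle'' objects coincide once $X$ is known to be exact, and must use left-exactness of $\Hom_{\cat{G}}(G,-)$ (which it has as a right adjoint) to identify kernels with Hom's of cycle subobjects, so that exactness of $\Hom_{\cat{G}}(G,X)$ upgrades to the $G$-subobject condition. Everything else is either a definitional unwinding or a direct appeal to standard properties of exact categories and the exactness of $\Hom_{\cat{G}}(G,-)$ on $\cat{G}_G$.
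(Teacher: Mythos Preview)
Your proposal is correct and follows essentially the same route as the paper: both reduce everything to criterion~(c) via the functor $\Hom_{\cat{G}}(G,-)$, use the product decomposition for (c)$\Leftrightarrow$(d), push a contracting homotopy through the additive functor for~(2), and use the induced short exact sequence of complexes of abelian groups for the two-out-of-three and retract closure in~(3). The only notable difference is that for (c)$\Rightarrow$(b) the paper simply cites Lemma~\ref{lemma-generators reflect exactness} and leaves the $G$-subobject condition $Z_nX \subseteq_G X_n$ implicit, whereas you spell out the left-exactness argument identifying $\im(d_n)_*$ with $\Hom_{\cat{G}}(G,Z_{n-1}X)$; your extra detail is a genuine (and correct) completion of what the paper treats as obvious.
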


\begin{proof}
For (1), we clearly have $\text{(a)} \Leftrightarrow \text{(b)} \Rightarrow \text{(c)} \Rightarrow \text{(d)}$. (d) and (c) are equivalent because $\Hom_{\cat{G}}(G,X) \cong \prod_{i \in I} \Hom_{\cat{G}}(G_i,X)$ (and a product of exact complexes is exact in \textbf{Ab}). Using Lemma~\ref{lemma-generators reflect exactness} we see (c) implies (b).

For (2), recall that having $1_X \sim 0$ means there exists maps $\{s_n : X_n \xrightarrow{} X_{n+1}\}$ such that $sd+ds=1$. Applying the additive functor $\Hom_{\cat{G}}(G,-)$ to this equation shows that $\Hom_{\cat{G}}(G,X)$ is also contractible. In particular it is exact.

For (3), note that if $X \hookrightarrow Y \twoheadrightarrow Z$ is a short exact sequence in $\cha{G}_G$, then since it is degreewise $G$-exact we get a short exact sequence of complexes of abelian groups $0 \xrightarrow{} \Hom_{\cat{G}}(G,X) \xrightarrow{} \Hom_{\cat{G}}(G,Y) \xrightarrow{} \Hom_{\cat{G}}(G,Z) \xrightarrow{} 0$. If any two out of three of these are exact then so is the third. For retracts, note that any additive functor preserves retracts. So this is true since a retract of an exact complex of abelian groups is again an exact complex.
\end{proof}

\subsection{Projectives in $\boldsymbol{\text{Ch}(\cat{G})_G}$.}

Here we classify the projective objects of $\cha{G}_G$.

\begin{lemma}\label{lemma-complexes that are projective}
Call a chain complex $X$ in $\cha{G}$ a \textbf{$\boldsymbol{G}$-projective} complex if it is projective in the exact category $\cha{G}_G$. The following are equivalent:
\begin{enumerate}
\item $X$ is $G$-projective.
\item $X$ is $G$-acyclic with each $Z_nX$ a $G$-projective.
\item $X$ is isomorphic to a split exact complex with $G$-projective components. That is, $X \cong \oplus_{n \in \Z} D^n(P_n)$ where each $P_n$ is a $G$-projective.
\item $X$ is a contractible complex with each $X_n$ $G$-projective.
\end{enumerate}
\end{lemma}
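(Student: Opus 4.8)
The plan is to establish the cycle of implications $(1) \Rightarrow (2) \Rightarrow (3) \Rightarrow (4) \Rightarrow (1)$, which is the standard strategy for classifying projectives in a category of chain complexes, now carried out relative to the $G$-exact structure. The key tools are the disk adjunction isomorphisms of Lemma~\ref{lemma-disk adjunctions}, the characterization of $G$-acyclic complexes in Lemma~\ref{lemma-properties G-exact}, and Corollary~\ref{cor-G-projective generator}(4) characterizing $G$-projective objects of $\cat{G}$ as direct summands of direct sums of the $G_i$.

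First I would prove $(1) \Rightarrow (2)$. Suppose $X$ is $G$-projective in $\cha{G}_G$. To see each $X_n$ (hence each $Z_nX$, once we know the complex is $G$-acyclic) is suitably projective, use that $\cha{G}_G$ has enough projectives and these are direct sums of disks $D^n(G_i)$ by Corollary~\ref{cor-G-projective generator} applied to $\cha{G}_G$ via Lemma~\ref{lemma-generators in Ch(G)}; so $X$ is a retract of such a direct sum of disks, forcing each $X_n$ to be $G$-projective in $\cat{G}$ and forcing $X$ to be contractible (a retract of a contractible, in particular $G$-acyclic, complex is $G$-acyclic by Lemma~\ref{lemma-properties G-exact}(3)). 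Then $Z_nX$ is a $G$-subobject of $X_n$ with $X_n/Z_nX \cong Z_{n-1}X$, and the split exactness gives $X_n \cong Z_nX \oplus Z_{n-1}X$, so each $Z_nX$ is a summand of a $G$-projective, hence $G$-projective. Alternatively, and more cleanly: $\GExt^1_{\cha{G}}(X,Y) = 0$ for all $Y$; applying this with $Y = D^{n+1}(A)$ and Lemma~\ref{lemma-disk adjunctions}(2) gives $\GExt^1_{\cat{G}}(X_n,A) = 0$ for all $A$, so each $X_n$ is $G$-projective; and $X$ must be $G$-acyclic because if not, $\homcomplex(X,X)$ would be... actually the cleanest route to $G$-acyclicity is that the natural degreewise-split surjection $\oplus_n D^n(X_n) \twoheadrightarrow X$ (which is a $G$-epimorphism) must split since $X$ is $G$-projective, realizing $X$ as a retract of a contractible complex.

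Next, $(2) \Rightarrow (3)$: if $X$ is $G$-acyclic with each $Z_nX$ $G$-projective, then each short exact sequence $Z_nX \rightarrowtail X_n \twoheadrightarrow Z_{n-1}X$ is $G$-exact and splits (the target is $G$-projective), so $X_n \cong Z_nX \oplus Z_{n-1}X$ and assembling these splittings gives $X \cong \oplus_n D^n(Z_nX)$ with $G$-projective components $P_n := Z_nX$; conversely the terms $P_n$ of such a decomposition are summands of $X_n$, hence $G$-projective. For $(3) \Rightarrow (4)$: a complex $\oplus_n D^n(P_n)$ is visibly contractible (each disk is contractible, and a direct sum of contractibles is contractible) with $n$th term $P_n \oplus P_{n+1}$, which is $G$-projective when each $P_n$ is. Finally $(4) \Rightarrow (1)$: if $X$ is contractible with $G$-projective components, then $X \cong \oplus_n D^n(X_n)$ up to isomorphism (contractibility splits the cycle sequences), and by Lemma~\ref{lemma-disk adjunctions}(1) we have $\GExt^1_{\cha{G}}(D^n(A),Y) \cong \GExt^1_{\cat{G}}(A,Y_n)$, which vanishes for all $Y$ when $A$ is $G$-projective; direct sums of projectives are projective in any exact category (\cite[Corollary~11.7]{buhler-exact categories}), so $X$ is $G$-projective.

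The main obstacle I expect is not any single implication but pinning down exactly where contractibility versus mere $G$-acyclicity enters, and making sure the decomposition $X \cong \oplus_n D^n(Z_nX)$ is justified carefully: it requires both that the complex be exact (so $X_n \twoheadrightarrow Z_{n-1}X$ with kernel $Z_nX$) \emph{and} that each such short exact sequence split, which in turn needs $Z_{n-1}X$ to be $G$-projective and the sequence to be $G$-exact — the latter being precisely the content of $Z_nX \subseteq_G X_n$, i.e. $G$-acyclicity in the form of Lemma~\ref{lemma-properties G-exact}(1)(b). So the real work is bookkeeping the equivalence between ``$G$-acyclic with $G$-projective cycles'' and ``split exact with $G$-projective terms,'' everything else being a formal consequence of the disk adjunctions and the general fact that coproducts of projectives are projective.
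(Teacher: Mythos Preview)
Your proposal is correct and follows essentially the same route as the paper: both arguments hinge on producing a degreewise $G$-epimorphism from a direct sum of disks on $G$-projectives onto $X$, splitting it when $X$ is $G$-projective, and then invoking the disk adjunction of Lemma~\ref{lemma-disk adjunctions} together with~\cite[Corollary~11.7]{buhler-exact categories} for the converse direction; the paper simply compresses your $(2)\Leftrightarrow(3)\Leftrightarrow(4)$ into ``standard arguments.'' One small slip: in your $(4)\Rightarrow(1)$ you write $X \cong \oplus_n D^n(X_n)$, but the disks should be on the cycles, i.e.\ $X \cong \oplus_n D^n(Z_{n-1}X)$, exactly as in your own $(2)\Rightarrow(3)$ step.
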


\begin{proof}
Using part~(3) of Corollary~\ref{cor-G-projective generator} and~\cite[Corollary~2.7]{gillespie-recoll2} we can find, for any chain complex $X$, a $G$-epimorphism $\oplus_{n \in \Z} D^n(P_n) \twoheadrightarrow X$ in which each $P_n$ is $G$-projective. If $X$ is $G$-projective, then this is a split epi. Then (2),(3), and~(4) all follow and are equivalent by standard arguments. On the other hand, the isomorphism $\GExt^1_{\cha{G}}(D^n(A),X) \cong \GExt^1_{\cat{G}}(A,X_n)$ of Lemma~\ref{lemma-disk adjunctions} tells us that a disk $D^n(A)$ is $G$-projective if and only if $A$ is $G$-projective in $\cat{G}_G$. Moreover, in any exact category a direct sum is projective if and only if each summand is projective by~\cite[Corollary~11.7]{buhler-exact categories}.
\end{proof}

\subsection{The $\boldsymbol{G}$-derived category.}

We now construct the $G$-derived category by putting a cofibrantly generated ``projective'' model structure on $\cha{G}_G$. The model structure follows as a Corollary to the next theorem. The proof relies on Quillen's small object argument. We refer to the version in~\cite[Theorem~2.1.14]{hovey-model-categories} and in particular we refer the reader there for the definition of the notation $I\text{-cell}$ and $I\text{-inj}$. We also refer the reader to~\cite{hovey} for the language of cotorsion pairs.

We define two sets of maps which will respectively be the \emph{generating cofibrations} and \emph{generating trivial cofibrations}:
$$I = \{0 \rightarrowtail D^n(G_i)\} \cup \{S^{n-1}(G_i) \rightarrowtail D^n(G_i)\} \ , \ \ \ \text{and} \ \ \ J = \{0 \rightarrowtail D^n(G_i)\}.$$
We also define the following set of objects which will cogenerate the cotorsion pair: $$\class{S} = \{D^n(G_i)\} \cup \{S^n(G_i)\}.$$
Note that $\class{S} = \cok{I} = \{\cok{i} \,|\, i \in I \}$. We leave it to the reader to check the easy fact that a chain complex $X$ satisfies $X \in \rightperp{\class{S}}$  if and only if $(X \xrightarrow{} 0) \in I\text{-inj}$. (The ``perp'' here is taken with respect to the degreewise $G$-exact sequences. So use $\GExt^1_{\cha{G}}$ and the fact that the $D^n(G_i)$ are $G$-projective.)

\begin{theorem}\label{them-projective model for G-derived}
Let $\cat{G}$ be any Grothendieck category with a generator $G = \oplus G_i$. Let $\class{W}$ denote the class of all $G$-acyclic complexes. Then the set $\class{S} = \{D^n(G_i)\} \cup \{S^n(G_i)\}$ cogenerates a cotorsion pair $(\class{P},\class{W})$ in the exact category $\cha{G}_G$ with the following properties.
\begin{enumerate}
\item $(\class{P},\class{W})$ is complete. In fact, for any chain complex $X$ there is a $G$-exact sequence $W \rightarrowtail P \twoheadrightarrow X$ where $W \in \class{W}$ and $P \in \class{P}$ is a transfinite (degreewise-split) extension of $\class{S}$. In particular, each $P_n$ is a direct sum of copies of the $G_i$.
    \item $P \in \class{P}$ if and only if $P$ is a retract of a  transfinite (degreewise-split) extension of $\class{S}$. We will call a complex in $\class{P}$ a \textbf{semi-$\boldsymbol{G}$-projective} complex.
\item $\class{W}$ is thick and $\class{P} \cap \class{W}$ coincides with the class of projective complexes in $\cha{G}_G$. (See Lemma~\ref{lemma-complexes that are projective}.)
\end{enumerate}
\end{theorem}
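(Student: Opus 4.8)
The plan is to apply Quillen's small object argument (in the version cited from Hovey's book) to the set $I$ inside the exact category $\cha{G}_G$. Since each $D^n(G_i)$ is $G$-projective by Corollary~\ref{cor-G-projective generator} and Lemma~\ref{lemma-disk adjunctions}, and since $\cha{G}_G$ has enough projectives (Lemma~\ref{lemma-complexes that are projective} plus the degreewise construction), the relevant ``$\Ext$-orthogonality'' is computed via $\GExt^1_{\cha{G}}$. The argument splits into three parts matching the three numbered items: first establish that $\leftperp{(\rightperp{\class{S}})}$ together with $\rightperp{\class{S}}$ is the claimed cotorsion pair with $\rightperp{\class{S}} = \class{W}$; second, complete it using the small object argument and the structure of $I\text{-cell}$; third, identify the three classes concretely.

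First I would identify $\rightperp{\class{S}} = \class{W}$. By the parenthetical remark preceding the theorem, a complex $X$ lies in $\rightperp{\class{S}}$ iff $(X \to 0) \in I\text{-inj}$. Unravelling the lifting property against $0 \rightarrowtail D^n(G_i)$ says $\Hom_{\cha{G}}(D^n(G_i), X) \to 0$ is surjective (automatic), while lifting against $S^{n-1}(G_i) \rightarrowtail D^n(G_i)$ says every map $S^{n-1}(G_i) \to X$ extends over $D^n(G_i)$; combined with $\GExt^1_{\cha{G}}(S^n(G_i),X)=0$ (which follows since $D^n(G_i) \to S^{n-1}(G_i)$ realizes $S^n(G_i)$ up to shift as a $G$-projective presentation), one sees $X \in \rightperp{\class{S}}$ iff for every $n,i$ the map $\Hom_{\cat{G}}(G_i,X_n) \to \Hom_{\cat{G}}(G_i, Z_{n-1}X)$ is onto and the relevant homology vanishes — equivalently $\Hom_{\cat{G}}(G_i,X)$ is exact for all $i$, i.e.\ $X$ is $G$-acyclic by Lemma~\ref{lemma-properties G-exact}(1). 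So $\rightperp{\class{S}} = \class{W}$ and we set $\class{P} = \leftperp{\class{W}}$; since $\class{S} \subseteq \class{P}$ by construction, $(\class{P},\class{W})$ is a cotorsion pair cogenerated by $\class{S}$.

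Next, completeness and item~(1): run the small object argument on $I$. It produces, for any $X$, a factorization $0 \rightarrowtail P \to X$ with $P \to X$ in $I\text{-inj}$ and $0 \rightarrowtail P$ in $I\text{-cell}$. Being in $I\text{-inj}$ relative to the full set $I$ means $P \to X$ is a $G$-epimorphism (from the $0 \rightarrowtail D^n(G_i)$ part) and that its kernel $W$ lies in $\rightperp{\class{S}} = \class{W}$ (from the $S^{n-1}(G_i) \rightarrowtail D^n(G_i)$ part, giving the relevant lifts). Meanwhile $P \in I\text{-cell}$ is, by the standard analysis, a transfinite extension of pushouts of maps in $I$; since pushouts of $0 \rightarrowtail D^n(G_i)$ and of $S^{n-1}(G_i)\rightarrowtail D^n(G_i)$ are degreewise split $G$-monomorphisms with cokernel in $\class{S}$, $P$ is a transfinite degreewise-split extension of $\class{S}$, and in particular each $P_n$ is a direct sum of copies of the $G_i$. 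This gives the $G$-exact sequence $W \rightarrowtail P \twoheadrightarrow X$ and hence the special precover; the special preenvelope comes for free by the standard cotorsion-pair argument (or by dualizing with the $J$-construction). For item~(2): one inclusion is Eklof's lemma (transfinite extensions of $\class{S} \subseteq \class{P}$ stay in $\class{P} = \leftperp{\class{W}}$, using that $\cha{G}_G$ has exact coproducts and $\GExt^1$ kills such colimits), and retracts of objects in $\class{P}$ are in $\class{P}$; conversely, if $P \in \class{P}$, apply item~(1) to get $W \rightarrowtail Q \twoheadrightarrow P$ with $Q$ a transfinite extension of $\class{S}$ and $W \in \class{W}$; since $P \in \leftperp{\class{W}}$ this sequence splits, exhibiting $P$ as a retract of $Q$. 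Finally item~(3): $\class{W}$ is thick by Lemma~\ref{lemma-properties G-exact}(3). For $\class{P} \cap \class{W}$: any projective complex in $\cha{G}_G$ is $G$-acyclic with $G$-projective cycles by Lemma~\ref{lemma-complexes that are projective}, and it is in $\class{P} = \leftperp{\class{W}}$ since $\GExt^1$ out of a projective vanishes; conversely, if $P \in \class{P} \cap \class{W}$, then $\GExt^1_{\cha{G}}(P,-)$ vanishes on all of $\class{W}$, but for a $G$-acyclic $P$ the class $\class{W}$ is ``large enough'' — in particular every $D^n(A)$ quotient that appears in a resolution of an arbitrary complex by projectives is built from $\class{W}$-pieces — so $\GExt^1_{\cha{G}}(P,-) = 0$ on all complexes, making $P$ projective in $\cha{G}_G$ (equivalently a contractible complex with $G$-projective components, by Lemma~\ref{lemma-complexes that are projective}).

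The main obstacle I anticipate is the careful bookkeeping in identifying $\rightperp{\class{S}}$ with $\class{W}$ and in the converse direction of item~(3): one must be precise about how the two families $\{0 \rightarrowtail D^n(G_i)\}$ and $\{S^{n-1}(G_i)\rightarrowtail D^n(G_i)\}$ jointly encode ``$G$-epimorphism onto $Z_{n-1}$ with the induced $\Hom(G_i,-)$ exact,'' and about why a $G$-acyclic semi-$G$-projective complex is genuinely projective rather than merely $\GExt^1$-orthogonal to $\class{W}$ — this last point uses that an arbitrary complex admits a $G$-projective resolution whose syzygies are all $G$-acyclic (Lemma~\ref{lemma-complexes that are projective} again), together with dimension shifting, to upgrade vanishing of $\GExt^1$ against $\class{W}$ to vanishing against everything.
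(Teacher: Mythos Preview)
Your overall strategy matches the paper's: identify $\rightperp{\class{S}}=\class{W}$ via the computation $\GExt^1_{\cha{G}}(S^n(G_i),X)=\Ext^1_{dw}(S^n(G_i),X)\cong H_{n-1}\Hom_{\cat{G}}(G_i,X)$, then run the small object argument on $I$ to get completeness, and deduce item~(2) by splitting the precover sequence. These parts are fine and essentially identical to the paper's treatment.

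The genuine gap is in your converse argument for item~(3), i.e.\ showing that $P\in\class{P}\cap\class{W}$ forces $P$ to be projective in $\cha{G}_G$. Your claim that ``an arbitrary complex admits a $G$-projective resolution whose syzygies are all $G$-acyclic'' is false: if $K\rightarrowtail Q\twoheadrightarrow X$ with $Q$ projective (hence $G$-acyclic), thickness of $\class{W}$ tells you $K\in\class{W}$ only when $X\in\class{W}$ as well. So dimension-shifting against an \emph{arbitrary} target $X$ does not reduce to vanishing against $\class{W}$, and your proposed upgrade of ``$\GExt^1(P,-)=0$ on $\class{W}$'' to ``$\GExt^1(P,-)=0$ on everything'' does not go through.

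The fix is much simpler and is what the paper invokes (via~\cite[Proposition~3.4]{bravo-gillespie-hovey}): resolve $P$ itself, not an arbitrary complex. Since $\cha{G}_G$ has enough projectives, choose a $G$-exact sequence $K\rightarrowtail Q\twoheadrightarrow P$ with $Q$ a $G$-projective complex. Projective complexes lie in $\class{W}$ (they are contractible), and $P\in\class{W}$ by hypothesis, so thickness gives $K\in\class{W}$. But $P\in\class{P}=\leftperp{\class{W}}$, hence $\GExt^1_{\cha{G}}(P,K)=0$, the sequence splits, and $P$ is a retract of the projective $Q$. This is the missing idea; once you replace your dimension-shifting paragraph with it, the proof is complete and agrees with the paper's.
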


For $\cat{G}$ = $R$-Mod and $G=R$, this recovers the usual projective model structure on $\ch$ where the cofibrant complexes are the DG-projective complexes. Some authors call these complexes \emph{semiprojective}, and since DG-$G$-projective looks odd we use semiprojective.

Our proof of Theorem~\ref{them-projective model for G-derived} is based on the proof of~\cite[Theorem~6.5]{hovey}. Indeed for the case when $\cat{G}$ is locally finitely presentable (that is, $G = \oplus G_i$ where the $G_i$ are finitely presented), we only need the first paragraph of the proof below, combined with Corollary~\ref{cor-transfinite compositions and extensions of G-monos} and~\cite[Theorem~6.5]{hovey}.

\begin{proof}
Since each $G_i$ is $G$-projective we have an equality
$$\GExt^1_{\cha{G}}(S^n(G_i),X) = \Ext^1_{dw}(S^n(G_i),X).$$
So $X \in \rightperp{\{S^n(G_i)\}}$ if and only if for each $n$ we have vanishing of
$$\Ext^1_{dw}(S^n(G_i),X) = H_{n-1}\homcomplex(S^0(G_i),X) = H_{n-1}\Hom_{\cat{G}}(G_i,X).$$ So $X  \in \rightperp{\{S^n(G_i)\}}$ if and only if $X$ is $G$-acyclic. So indeed $\class{S}$ cogenerates a cotorsion pair $(\class{P},\class{W})$ in the exact category $\cha{G}_G$.

To show this cotorsion pair is complete we apply the small object argument from~\cite[Theorem~2.1.14]{hovey-model-categories}. We can do this since every object in a Grothendieck category is \emph{small}. The small object argument provides, for a given map $X \xrightarrow{} Y$, a functorial factorization $X \xrightarrow{} Z \xrightarrow{} Y$ where $(X \xrightarrow{} Z) \in I\text{-cell}$ and $(Z \xrightarrow{} Y) \in I\text{-inj}$. So we now pause to better understand $I\text{-cell}$ and $I\text{-inj}$.

\

\noindent \underline{Claim}: If $(p : Z \xrightarrow{} Y) \in I\text{-inj}$, then $p$ is a degreewise $G$-epimorphism with $G$-acyclic kernel. For completeness, we include the following direct proof of the claim. However,  we note that  
$I\text{-inj}$ can also be characterized by applying~\cite[Propositions~1.3--1.6]{hovey-sheaves}. Indeed take the set $\class{M}$ in~\cite[Definition~1.1]{hovey-sheaves} to be $\class{M} = \{0 \rightarrowtail G_i \} \cup \{0 \rightarrowtail 0 \}$. 

To prove the claim, say we have such a $p : Z \xrightarrow{} Y$ in $I\text{-inj}$. Then for each $n$ and $i$ we have a lift in the diagram
$$\begin{CD}
0    @>>> Z \\
@VVV             @VVpV   \\
 D^n(G_i) @>>> Y  \\
\end{CD}$$ This implies that each $p_n$ is a $G$-epimorphism.  So now we have $K \rightarrowtail Z \twoheadrightarrow Y$ is $G$-exact where $K = \ker{p}$. It is left to show $K$ is $G$-acyclic. For \emph{any} set of maps $I$, it is an easy exercise to check that $I\text{-inj}$ is closed under pullbacks. Since $K \xrightarrow{} 0$ lies in the pullback square
$$\begin{CD}
K    @>>> Z \\
@VVV             @VVpV   \\
0 @>>> Y  \\
\end{CD}$$ we see $(K \xrightarrow{} 0) \in I\text{-inj}$. But as pointed out above the statement of the theorem, this is equivalent to saying $K \in \rightperp{\class{S}}$. So $K$ is $G$-acyclic.

\

\noindent \underline{Claim}: If $(f : X \xrightarrow{} Z) \in I\text{-cell}$, then $f$ is a degreewise split monomorphism with cokernel a transfinite extension of $\class{S}$.

To prove this, say $f : X \xrightarrow{} Z$ is in $I\text{-cell}$. By definition, $f$ is a transfinite composition of pushouts of maps of the form $0 \rightarrowtail D^n(G_i)$ or $S^{n-1}(G_i) \rightarrowtail D^n(G_i)$. Note that such pushouts are necessarily degreewise split monomorphisms whose cokernels are in $\class{S}$. This means $(f : X \xrightarrow{} Z) = (X_0 \xrightarrow{f} \varinjlim_{\alpha < \lambda} X_{\alpha})$ is a transfinite (degreewise split) extension of $X=X_0$ by $\class{S}$. Since transfinite extensions of split monomorphisms are again split monomorphisms, we conclude that $f$ too is a degreewise split monomorphism. We now look at $\cok{f}$. Since direct limits are exact we have a short exact sequence $X_0 \rightarrowtail \varinjlim X_{\alpha} \twoheadrightarrow \varinjlim (X_{\alpha}/X_0)$. In particular, $\cok{f} \cong \varinjlim (X_{\alpha}/X_0)$ is a transfinite extension of
$$0 \rightarrowtail X_1/X_0 \rightarrowtail X_2/X_0 \rightarrowtail X_3/X_0 \rightarrowtail \cdots \rightarrowtail X_{\alpha}/X_0 \rightarrowtail \cdots$$ But $(X_{\alpha+1}/X_0)/(X_{\alpha}/X_0) \cong X_{\alpha+1}/X_{\alpha} \in \class{S}$. This proves $f$ is a degreewise split monomorphism with cokernel a transfinite extension of $\class{S}$.

We now can prove that $(\class{P},\class{W})$ is complete. So suppose $Y$ is an arbitrary chain complex and use the small object argument to factor $0 \xrightarrow{} Y$ as $0 \xrightarrow{} Z \xrightarrow{p} Y$ where $0 \xrightarrow{} Z  \in I\text{-cell}$ and $Z \xrightarrow{p} Y$ is in $I\text{-inj}$. Then $K \rightarrowtail Z \twoheadrightarrow Y$ is a degreewise $G$-exact sequence with $K$ a $G$-acyclic complex. Also, $Z$ must be a transfinite extension of $\class{S}$. But by \cite[Lemma~6.2]{hovey} (taking the $G$-exact sequences as the \emph{proper class} of short exact sequences) we have that $\class{P}$ is closed under retracts and transfinite extensions. Therefore $Z \in \class{P}$ and $(\class{P},\class{W})$ has enough projectives in the way we claim in (1). To see that $(\class{P},\class{W})$ has enough injectives we instead factor
$X \xrightarrow{} 0$ as $X \xrightarrow{f} Z \xrightarrow{} 0$ where $X \xrightarrow{f} Z  \in I\text{-cell}$ and $Z \xrightarrow{} 0$ is in $I\text{-inj}$. Then $f$ is a degreewise split monomorphism (so a $G$-mono) with $\cok{f} \in \class{P}$, and $Z \in \class{W}$.

Next, statement (2). As mentioned above, $\class{P}$ is closed under retracts. Statement (2) is then a result of the following observation: Given $Q \in \class{P}$, write a $G$-exact sequence $W \rightarrowtail P \twoheadrightarrow Q$ where $W \in \class{W}$ and $P \in \class{P}$ is a transfinite (degreewise-split) extension of $\class{S}$. This $G$-exact sequence is an element of $\GExt^1_{\cha{G}}(Q,W) = 0$. So it splits and $Q$ is a retract of $P$ as desired.

For (3), we see from Lemma~\ref{lemma-properties G-exact} that $\class{W}$ is thick and contains all contractible complexes. So in particular $\class{W}$ contains the projective objects of $\cha{G}_G$ by Lemma~\ref{lemma-complexes that are projective}. Since $(\class{P},\class{W})$ is complete with $\class{W}$ thick and containing the projectives, the result follows by the argument in~\cite[Proposition~3.4]{bravo-gillespie-hovey}.
\end{proof}

In the language of~\cite{gillespie-exact model structures} and~\cite{gillespie-recoll}, parts (1) and (3) of the above Theorem say that $(\class{P},\class{W})$ is a projective cotorsion pair in $\cha{G}_G$. Such a cotorsion pair is equivalent to a model structure for which every object is fibrant. The  following corollary records some basic facts about this model structure.

\begin{corollary}\label{cor-projective model for G-derived}
Let $\cat{G}$ be any Grothendieck category with a generator $G = \oplus G_i$. Then there is a model structure on $\cha{G}$ which we call the \textbf{$\boldsymbol{G}$-projective model structure} whose trivial objects are the $G$-acyclic complexes. This gives us a model for the \textbf{$\boldsymbol{G}$-derived category}, which we denote by $\class{D}(G)$. The model structure satisfies the following:
\begin{enumerate}
\item The fibrations are precisely the $G$-epimorphisms. That is, the chain maps which are $G$-epimorphisms in each degree.
\item The trivial fibrations are the $G$-epimorphisms with $G$-acyclic kernel.
\item The cofibrations are the degreewise split monomorphisms whose cokernel is a semi-$G$-projective complex.
\item The trivial cofibrations are the split monomorphisms whose cokernel is a $G$-projective complex.
\item The weak equivalences are the \textbf{$\boldsymbol{G}$-homology isomorphisms}. That is, the chain maps $f : X \xrightarrow{} Y$ for which $\Hom_{\cat{G}}(G,f) : \Hom_{\cat{G}}(G,X) \xrightarrow{} \Hom_{\cat{G}}(G,Y)$ is a homology isomorphism.
\item The model structure is cofibrantly generated. The sets $I$ and $J$ from above are respectively the generating cofibrations and generating trivial cofibrations. Thus $\class{D}(G)$ is well generated in the sense of~\cite{neeman-well generated}.
\item If each $G_i$ is finitely presented, then the model structure is finitely generated and so in this case $\class{D}(G)$ is compactly generated.

\end{enumerate}
\end{corollary}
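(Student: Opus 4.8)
The plan is to feed the projective cotorsion pair $(\class{P},\class{W})$ produced by Theorem~\ref{them-projective model for G-derived} into the correspondence between projective cotorsion pairs and model structures recalled just before the statement, and then to unwind each abstract feature into the concrete descriptions (1)--(7). By \cite{gillespie-exact model structures} and \cite{gillespie-recoll}, a complete cotorsion pair $(\class{P},\class{W})$ in the exact category $\cha{G}_G$ with $\class{W}$ thick and $\class{P}\cap\class{W}$ equal to the class of projective objects is exactly the data of an exact model structure on $\cha{G}_G$ in which every object is fibrant, the cofibrant objects are precisely $\class{P}$, and the class of trivial objects is $\class{W}$. Since $\cha{G}_G$ has the same underlying category and morphisms as $\cha{G}$, this is a model structure on $\cha{G}$; its trivial objects are the $G$-acyclic complexes, which gives the existence statement and identifies $\class{D}(G) = \Ho{\cha{G}}$.

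Next I would read off (1)--(4) from the exact structure. By Lemma~\ref{lemma-generators in Ch(G)} the admissible epimorphisms of $\cha{G}_G$ are precisely the degreewise $G$-epimorphisms; in a model structure from a projective cotorsion pair the fibrations are all admissible epimorphisms and the trivial fibrations are those whose kernel is a trivial object, so with Lemma~\ref{lemma-properties G-exact} this yields (1) and (2). The cofibrations are the admissible monomorphisms with cokernel in $\class{P}$; since by Theorem~\ref{them-projective model for G-derived}(1)--(2) every component of an object of $\class{P}$ is $G$-projective, any degreewise $G$-exact sequence with such a cokernel splits in each degree, so a cofibration is a degreewise split monomorphism with semi-$G$-projective cokernel, and conversely any such map is an admissible monomorphism with cokernel in $\class{P}$; this gives (3). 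The trivial cofibrations are the admissible monomorphisms with cokernel in $\class{P}\cap\class{W}$, which by Lemma~\ref{lemma-complexes that are projective} is the class of contractible complexes with $G$-projective components; such a cokernel is a projective object of $\cha{G}_G$, so the defining sequence splits in $\cha{G}_G$, i.e.\ the monomorphism is split as a chain map, giving (4).

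For (5) the key input is that $\Hom_{\cat{G}}(G,-)\colon\cha{G}_G \to \Ch(\Z)$ is exact (Corollary~\ref{cor-G-exacts form an exact category} applied degreewise). Applying it to a trivial fibration $Z \twoheadrightarrow Y$ (with $G$-acyclic kernel) and to a trivial cofibration $X \rightarrowtail Z$ (with cokernel in $\class{W}$, hence $G$-acyclic), and using the long exact homology sequence, shows both are $G$-homology isomorphisms; since every weak equivalence factors as a trivial cofibration followed by a trivial fibration, every weak equivalence is a $G$-homology isomorphism. Conversely, given a $G$-homology isomorphism $f$, factor it as a trivial cofibration $i$ followed by a fibration $p$; applying $\Hom_{\cat{G}}(G,-)$ and two-out-of-three for homology isomorphisms shows $p$ is a $G$-homology isomorphism, and as $p$ is a degreewise $G$-epimorphism its kernel is then $G$-acyclic by the long exact sequence, so $p$ is a trivial fibration and $f = p i$ is a weak equivalence. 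This proves (5).

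Finally, for (6) and (7) I would reuse the analysis in the proof of Theorem~\ref{them-projective model for G-derived}: there it is shown that $I\text{-inj}$ consists of the degreewise $G$-epimorphisms with $G$-acyclic kernel (the trivial fibrations) and that $I\text{-cell}$ maps are cofibrations; a parallel, easier check shows $J\text{-inj}$ consists of the degreewise $G$-epimorphisms (the fibrations, since lifting against $0 \rightarrowtail D^n(G_i)$ is precisely being a $G$-epimorphism in degree $n$) and that $J\text{-cell}$ maps are trivial cofibrations. Since every object of a Grothendieck category is small, the small object argument applies and the recognition theorem \cite[Theorem~2.1.19]{hovey-model-categories} identifies the model structure as cofibrantly generated with generating (trivial) cofibrations $I$ (resp.\ $J$); as $\cha{G}$ is locally presentable and the model structure is exact (hence stable), $\class{D}(G)$ is well generated \cite{neeman-well generated}. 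When each $G_i$ is finitely presented, the domains and codomains of the maps in $I$ and $J$ --- namely $0$, $S^{n-1}(G_i)$ and $D^n(G_i)$ --- are finitely presented objects of $\cha{G}$, so the model structure is finitely generated in the sense of \cite{hovey-model-categories}, and $\class{D}(G)$ is compactly generated, the cofibers $\class{S} = \{D^n(G_i)\}\cup\{S^n(G_i)\}$ of the maps in $I$ providing a set of compact generators. I expect the main obstacle to be item (5) --- the forward implication is formal, but the converse needs care in transporting two-out-of-three through $\Hom_{\cat{G}}(G,-)$ and in checking that a fibration which is a $G$-homology isomorphism is automatically trivial --- and, for (7), verifying that finitely presented chain complexes are genuinely compact in $\class{D}(G)$, for which one uses that coproducts in $\cha{G}$ are formed degreewise and that $\Hom_{\cat{G}}(G_i,-)$ preserves coproducts when $G_i$ is finitely presented.
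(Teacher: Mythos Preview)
Your proposal is correct and follows essentially the same route as the paper: feed the projective cotorsion pair $(\class{P},\class{W})$ into Hovey's correspondence, read off (1)--(4) from the exact structure, prove (5) by factoring and tracking $G$-homology through the exact functor $\Hom_{\cat{G}}(G,-)$, and deduce (6)--(7) from the small object argument and the finitely generated criterion in \cite{hovey-model-categories}. One small caveat: you assert that Theorem~\ref{them-projective model for G-derived} already shows $I\text{-inj}$ \emph{equals} the class of degreewise $G$-epimorphisms with $G$-acyclic kernel, but that proof only gives the inclusion $I\text{-inj}\subseteq\{\text{such maps}\}$; the reverse inclusion requires the explicit lifting argument (extend over $S^{n-1}(G_i)\rightarrowtail D^n(G_i)$ using $\GExt^1(S^{n-1}(G_i),K)=0$) that the paper carries out in the proof of this very corollary, so be sure to supply it.
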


\begin{proof}
In the exact category $\cha{G}_G$, we have the complete cotorsion pair $(\class{P},\class{W})$. We also have the complete cotorsion pair $(\class{Q},\class{A})$ where $\class{Q}$ is the class of $G$-projective complexes of Lemma~\ref{lemma-complexes that are projective} and $\class{A}$ is the class of all complexes. Theorem~\ref{them-projective model for G-derived} along with the main theorem of~\cite{hovey} imply that we automatically have the model structure with (trivial) fibrations and (trivial) cofibrations as described.

In the correspondence between cotorsion pairs and model structures, the weak equivalences are precisely the maps which factor as a trivial cofibration followed by a trivial fibration. We wish to see that such maps are exactly the $G$-homology isomorphisms. First, given any $f : X \xrightarrow{} Y$, lets denote the the composite functor $H_n[\Hom_{\cat{G}}(G,f)]$ simply by $H_nf_*$. Using the model structure we can apply the factorization axiom and write $f = pi$ where $p$ is a fibration and $i$ is a trivial cofibration. We have $H_nf_* = H_np_* \circ H_ni_*$. Since $i$ is a split monomorphism with $G$-projective (so $G$-acyclic) cokernel, we see $H_ni_*$ is an isomorphism. So $H_nf_*$ is an isomorphism if and only if $H_np_*$ is an isomorphism. Since $p$ is a $G$-epimorphism, we see $H_np_*$ is an isomorphism (for all $n$) if and only if $\ker{p}$ is $G$-acyclic. That is, iff $p$ is a trivial fibration. We have now shown that $f$ factors as a trivial cofibration followed by a trivial fibration iff $H_nf_*$ is an isomorphism for all $n$.

It is easy to see that $J\text{-inj}$ is the class of $G$-epimorphisms. This means that $J$ is the set of generating trivial cofibrations. We also showed that everything in $I\text{-inj}$ is a $G$-epimorphism with $G$-acyclic kernel. So it is left to show that every $G$-epimorphism with $G$-acyclic kernel is in $I\text{-inj}$. So let $X \xrightarrow{p} Y$ be a $G$-epimorphism with kernel $K \in \class{W}$. Being a $G$-epimorphism we know that there is a lift in any diagram of the form
$$\begin{CD}
0    @>>> X \\
@VVV             @VVpV   \\
 D^n(G_i) @>>> Y  \\
\end{CD}$$
So all we need to show is that there is a lift for any diagram
$$\begin{CD}
 S^{n-1}(G_i)   @>>f> X \\
@ViVV             @VVpV   \\
 D^n(G_i) @>>g> Y  \\
\end{CD}$$
But again, we may start by finding an $D^n(G_i) \xrightarrow{h} X$ such that $ph = g$. We check that $(f-hi)$ lands in the kernel $K$. Now since $\GExt^1_{\cat{G}}(S^{n-1}(G_i),K)=0$, we see that the map $(f-hi)$ extends to some $D^n(G_i) \xrightarrow{\psi} K$. That is, $\psi i = (f-hi)$. So now we check that $(h+j\psi)$, where $j : K \rightarrowtail X$ is the desired lift. (i) $p(h+j\psi) = ph +0 = ph = g$. (ii) $(h+j\psi)i = hi+j\psi i = hi+(f-hi) = f$.

Since we have a cofibrantly generated model structure on a locally presentable (pointed) category, a main result from~\cite{rosicky-brown representability combinatorial model srucs} assures us that $\class{D}(G) = \Ho{\cha{G}}$ is well generated in the sense of~\cite{neeman-well generated} and~\cite{krause-well generated}. In the case that $G = \oplus G_i$ has each $G_i$ finitely presented, then the $G_i$ are \emph{finite} in the sense of~\cite[Section~7.4]{hovey-model-categories}. We then see that our model structure is \emph{finitely generated} and so~\cite[Corollary~7.4.4]{hovey-model-categories} tells us that $\class{D}(G) = \Ho{\cha{G}}$ has a set of small weak generators. In other words, it is compactly generated.
\end{proof}

\begin{remark}
Recall that by definition, a set $\class{S}$ of objects in a triangulated category such as $\class{D}(G)$ is called a set of \emph{weak generators} if $X = 0$ in $\class{D}(G)$ if and only if $\class{D}(G)(\Sigma^nS,X) = 0$ for all $n$ and $S \in \class{S}$. It is easy to see directly that $\{G_i = S^0(G_i)\}$ is a set of weak generators for $\class{D}(G)$. Indeed we wish to see that $X$ is $G$-acyclic if and only if $\class{D}(G)(S^n(G_i),X) = 0$ for all $n$ and $i$. But in the $G$-projective model structure we have that each $S^n(G_i)$ is cofibrant and every $X$ is fibrant, so we get that $\class{D}(G)(S^n(G_i),X) \cong \cha{G}(S^n(G_i),X)/\hspace{-.05in}\sim$ and the homotopy relation $\sim$ is the usual relation of chain homotopic maps. So it all boils down to checking that $X$ is $G$-acyclic if and only if $\cha{G}(S^n(G_i),X)/\hspace{-.05in}\sim \ = 0$ for all $n$ and $i$. But this is clear upon noting that $\cha{G}(S^n(G_i),X)/\hspace{-.05in}\sim \  \cong H_n[\Hom_{\cat{G}}(G_i,X)]$ and  referring to Lemma~\ref{lemma-properties G-exact}.

\end{remark}

\subsection{Computation of $\boldsymbol{\GExt^n_\cat{G}}$.}\label{subsec-GExt computation}

We have already seen an obvious analogy: $G$ is to $\class{G}_G$ as $R$ is to $R$-Mod. This analogy extends to the calculation of $\GExt^n_{\cat{G}}(A,B)$, as the existence of the $G$-projective model structure formalizes the fact that one can do homological with respect to $G$. In more detail, according to Corollary~\ref{cor-G-projective generator}, given any $A \in \cat{G}$, we may take a $G$-projective resolution
$$\class{P} \twoheadrightarrow A \ \equiv \ \cdots \rightarrow P_2 \rightarrow P_1 \rightarrow P_0 \twoheadrightarrow A.$$ By this we mean it is $G$-acyclic and each $P_n$ is $G$-projective. Then all the usual definitions and theorems hold for $G$-projective resolutions. For example, they are unique up to chain homotopy and one can define $\GExt^n_{\cat{G}}(A,B)$ via such resolutions. We obtain long exact sequences, starting with $G$-exact sequences, etc. Moreover $\GExt^n_{\cat{G}}(A,B)$ can alternately be defined using Yoneda's method: as equivalence classes of $G$-exact sequences $B \rightarrowtail L_1 \cdots \rightarrow L_n \twoheadrightarrow A$. (See also~\cite[Sections~1.2 and~2.1]{hovey-christensen-relative hom alg}; it is easy to see that the $G$-projectives and $G$-epimorphisms form a \emph{projective class}.)

Our point here is that for a $G$-projective resolution $\class{P} \twoheadrightarrow A$, we have a $G$-exact sequence of chain complexes
$K \rightarrowtail \class{P} \twoheadrightarrow S^0(A)$, where $K = \ker{(\class{P} \twoheadrightarrow S^0(A))}$. Moreover $K$ is $G$-acyclic and $\class{P}$ is semi-$G$-projective (since it is built up as a transfinite extension by consecutively attaching the semi-$G$-projective spheres $S^0(P_0)$, $S^1(P_1)$, $S^2(P_2)$, ...) So $\class{P}$ is a cofibrant replacement of $S^0(A)$ in the $G$-projective model structure. Hence using the fundamental theorem of model categories we have $$\class{D}(G)(A,\Sigma^nB) = \cha{G}(\class{P},S^n(B))/\hspace{-.05in}\sim \ = \ H^n[\Hom_{\cat{G}}(\class{P},B)] = \GExt^n_{\cat{G}}(A,B).$$

\subsection{The $\boldsymbol{\lambda}$-pure derived category.}\label{subsec-lambda pure derived cat}

In~\cite[Section~5.3]{hovey-christensen-relative hom alg} we see the construction of a model structure for the pure derived category of a ring $R$ and a canonical adjunction between the pure derived category of $R$ and the usual derived category $\class{D}(R)$. We describe now a natural extension of this fact to the $G$-derived category.

In any Grothendieck category $\cat{G}$, all objects are $\lambda$-presentable for some regular cardinal $\lambda$. In particular, for any choice of generator $G =\oplus G_i$ there is a $\lambda$ such that all the $G_i$ are $\lambda$-presentable. It follows that $\cat{G}$ is locally $\lambda$-presentable. (See Appendix~\ref{appendix-lambda pure} and~\cite[page 22]{adamek-rosicky} for language in this Subsection.) In fact, we see from~\cite[page 22]{adamek-rosicky} that $\class{G}$ is locally $\lambda$-presentable if and only if it has a generating set consisting of $\lambda$-presentable objects. Moreover, the category of all $\lambda$-presentable objects in $\cat{G}$ has a small skeleton. So we can find a set $\{\Lambda_i\}$ of representatives from each isomorphism class, and we set $\Lambda = \oplus \Lambda_i$. Since $\{\Lambda_i\}$ contains $\{G_i\}$, it is also a generating set for $\cat{G}$. From Lemma~\ref{lemma-properties G-exact} and Proposition~\ref{prop-lambda pure short exact sequences} the $\Lambda$-acyclic complexes are characterized as the exact complexes $X$ for which each $Z_nX \subseteq X_n$ is $\lambda$-pure. Such complexes are called \textbf{$\boldsymbol{\lambda}$-pure acyclic}. We call $\class{D}(\Lambda)$ the \textbf{$\boldsymbol{\lambda}$-pure derived category} of $\cat{G}$, and its model structure from Corollary~\ref{cor-projective model for G-derived} we call the \textbf{$\boldsymbol{\lambda}$-pure projective model structure}. The extension groups of Subsection~\ref{subsec-GExt computation} we denote by $\lambda$-PExt${}^n_{\cat{G}}$. We easily get the following.

\begin{corollary}\label{cor-lambda derived cat}
Let $\cat{G}$ be any Grothendieck category with $G$ and $\Lambda$ as above.
There is a canonical functor $\class{D}(\Lambda) \xrightarrow{} \class{D}(G)$ that is the identity on objects from the $\lambda$-pure derived category to the $G$-derived category. It induces  a map $\lambda$-\textnormal{PExt}${}^n_{\cat{G}}(A,B) \rightarrow \GExt^n_{\cat{G}}(A,B)$ which is natural in $A,B \in \cat{G}$. Moreover, $\class{D}(\Lambda) \xrightarrow{} \class{D}(G)$ admits a left adjoint.
\end{corollary}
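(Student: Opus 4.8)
The plan is to realize the canonical functor and its left adjoint as a Quillen adjunction between the two projective model structures of Corollary~\ref{cor-projective model for G-derived} --- the one for $\Lambda$ and the one for $G$.

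First I would record the basic containments coming from the fact that $\Lambda = \oplus\Lambda_i$ contains $G = \oplus G_i$ as a direct summand, since each $G_i$, being $\lambda$-presentable, occurs up to isomorphism among the $\Lambda_i$. Because $\Hom_{\cat{G}}(\oplus\Lambda_i,-) = \prod\Hom_{\cat{G}}(\Lambda_i,-)$ and a product of complexes of abelian groups is exact precisely when each factor is, $\Hom_{\cat{G}}(\Lambda,-)$-exactness of a short exact sequence forces $\Hom_{\cat{G}}(G,-)$-exactness; the same product argument shows a $\Lambda$-homology isomorphism is a $G$-homology isomorphism. Hence every (degreewise) $\Lambda$-exact sequence is (degreewise) $G$-exact, every $\Lambda$-acyclic complex is $G$-acyclic, the group of degreewise $\Lambda$-exact extensions is a subgroup of $\GExt^1_{\cha{G}}(X,Y)$, and the identity functor on $\cha{G}$, viewed as $\cha{G}_\Lambda \to \cha{G}_G$, is exact and sends $\lambda$-pure weak equivalences to $G$-weak equivalences.

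Next I would check that $\textnormal{id}\colon\cha{G}_\Lambda \to \cha{G}_G$ is right Quillen, with left adjoint $\textnormal{id}\colon\cha{G}_G \to \cha{G}_\Lambda$. By Corollary~\ref{cor-projective model for G-derived} fibrations are the $G$-epimorphisms and trivial fibrations the $G$-epimorphisms with $G$-acyclic kernel, and the containments above (a $\Lambda$-epimorphism is a $G$-epimorphism, a $\Lambda$-acyclic complex is $G$-acyclic) show $\textnormal{id}\colon\cha{G}_\Lambda\to\cha{G}_G$ preserves both classes. Equivalently the adjoint $\textnormal{id}\colon\cha{G}_G\to\cha{G}_\Lambda$ preserves cofibrations and trivial cofibrations; the only point needing attention is that a semi-$G$-projective complex is semi-$\Lambda$-projective. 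This uses Lemma~\ref{lemma-disk adjunctions} (so $D^n(A)$ is $G$-projective iff $A$ is $G$-projective in $\cat{G}_G$, with the analogue for $\Lambda$), the fact that a $G$-projective object is $\Lambda$-projective and a $G$-projective complex is a $\Lambda$-projective complex by Lemma~\ref{lemma-complexes that are projective}, and the direction-reversing behaviour of $\leftperp{(-)}$: since the $\Lambda$-acyclic complexes are contained in the $G$-acyclic complexes and the corresponding $\textnormal{Ext}^1$-groups are nested, membership in $\leftperp{\class{W}}$ (for the $G$-acyclics $\class{W}$) implies membership in the analogous left orthogonal for the $\Lambda$-acyclics. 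With the Quillen adjunction in hand we obtain a derived adjunction $\mathbf{L}\,\textnormal{id}\colon\class{D}(G)\rightleftarrows\class{D}(\Lambda)\colon\mathbf{R}\,\textnormal{id}$.

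Finally I would identify $\mathbf{R}\,\textnormal{id}\colon\class{D}(\Lambda)\to\class{D}(G)$ with the asserted canonical functor: since the $\lambda$-pure projective model structure is a projective model structure every object is fibrant, so no fibrant replacement intervenes and $\mathbf{R}\,\textnormal{id}$ is literally the identity on objects, agreeing with the functor induced by the identity of $\cha{G}$ through the universal property of localization. Its left adjoint is $\mathbf{L}\,\textnormal{id}$, which gives the last assertion. For the comparison of relative extension groups, recall from Subsection~\ref{subsec-GExt computation} that $\GExt^n_{\cat{G}}(A,B) = \class{D}(G)(A,\Sigma^nB)$ and, in the $\Lambda$-version, $\lambda$-$\textnormal{PExt}^n_{\cat{G}}(A,B) = \class{D}(\Lambda)(A,\Sigma^nB)$; applying $\mathbf{R}\,\textnormal{id}$ on Hom-sets then gives the desired map, which is natural in $A$ and $B$ by functoriality and which, concretely, sends the class of a $\lambda$-pure $n$-fold extension to its class as a $G$-exact $n$-fold extension. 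The one genuinely non-formal step is the closure assertion ``semi-$G$-projective $\Rightarrow$ semi-$\Lambda$-projective''; everything else is bookkeeping with the inclusion $\{G_i\}\subseteq\{\Lambda_i\}$.
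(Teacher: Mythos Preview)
Your proposal is correct and follows essentially the same route as the paper: exhibit the identity functor as a Quillen adjunction between the $G$-projective and $\Lambda$-projective model structures, then read off the derived adjunction and the map on relative $\Ext$ groups from Subsection~\ref{subsec-GExt computation}. The only difference is emphasis: the paper verifies the \emph{left} Quillen condition directly, using the description of semi-$G$-projectives as retracts of transfinite degreewise-split extensions of $\{S^n(G_i), D^n(G_i)\}$ and the inclusion $\{G_i\}\subseteq\{\Lambda_i\}$, whereas you first check the equivalent \emph{right} Quillen condition (a $\Lambda$-epimorphism is a $G$-epimorphism, a $\Lambda$-acyclic kernel is $G$-acyclic), which is arguably simpler, and then separately sketch the cofibration side via the nested-$\Ext$ orthogonality argument. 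Both arguments are valid; your verification via fibrations makes the ``non-formal step'' you flag entirely unnecessary.
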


\begin{proof}
First note that the identity functor $\cha{G} \xrightarrow{\text{id}} \cha{G}$ is left adjoint to itself. Since $\{G_i\} \subseteq \{\Lambda_i\}$, the identity functor takes semi-$G$-projective complexes (complexes built from all the $S^n(G_i)$) to semi-$\Lambda$-projective complexes (complexes built from all the $S^n(\Lambda_i)$). Similarly it takes $G$-projective complexes (those built from the $D^n(G_i)$) to $\Lambda$-projective complexes (built from the $D^n(\Lambda_i)$). This directly leads us to conclude the identity functor is a left Quillen functor from the $G$-projective model structure to the $\lambda$-pure projective model structure. This automatically provides an adjunction $\class{D}(G) \xrightarrow{L(\text{id})} \class{D}(\Lambda)$, taking a complex $X$ to its semi-$G$-projective cofibrant replacement. Its right adjoint $\class{D}(\Lambda) \xrightarrow{R(\text{id})} \class{D}(G)$ is the identity on objects since every object is fibrant.
Since the functor $R(\text{id})$ is identity on objects, the functor provides, for all $A,B \in \cat{G}$, a natural map $\class{D}(\Lambda)(A,\Sigma^nB) \rightarrow \class{D}(G)(A,\Sigma^nB)$. But from Subsection~\ref{subsec-GExt computation} we see this translates to a natural map $\lambda$-\textnormal{PExt}${}^n_{\cat{G}}(A,B) \rightarrow \GExt^n_{\cat{G}}(A,B)$.
\end{proof}

\section{The injective model for locally finitely presentable categories}\label{sec-injective model struc for G-derived}

In the previous section, we constructed the $G$-derived category of any pair $(\cat{G},G)$ where $\cat{G}$ is a Grothendieck category and $G = \oplus G_i$ is a generator. We constructed a model structure for $\class{D}(G)$ in which the cofibrant complexes were built from $G$-projective objects. Our goal in this section is to construct a dual model structure for $\class{D}(G)$, whose fibrant complexes are based on the $G$-injective objects. In order to do this we need to assume each $G_i$ is finitely presented, or equivalently, that $\cat{G}$ is \textbf{locally finitely presentable} (= locally $\omega$-presentable as defined in Appendix~\ref{appendix-lambda pure}). Indeed from~\cite[Theorem~1.11]{adamek-rosicky} we have that $\cat{G}$ is locally finitely presentable if and only if $\cat{G}$ has a set of generators $\{G_i\}_{i \in I}$ for which each $G_i$ is finitely presented (= $\omega$-presented).
Having different models for the same category is often useful. For example, the existence of the injective model structure implies the two recollement situations presented in Section~\ref{sec-recollement of Krause for G-derived}.

\subsection{$\boldsymbol{G}$-homology in locally finitely presentable categories.}

For a chain complex $X$, we define its $G$-homology as $H_n[\Hom_{\cat{G}}(G,X)]$. So the $G$-homology vanishes if and only if $X$ is $G$-acyclic. Recall that in a general Grothendieck category, a product of acyclic complexes need not again be acyclic. This is the point of Grothendieck's (AB4*) axiom. However, Theorem~\ref{them-projective model for G-derived} tells us that the $G$-acyclic complexes are closed under products, since they are the right half of a cotorsion pair. The point of this subsection is to collect other useful properties that hold under the added assumption that each $G_i$ is finitely presented. These properties will be used to construct the injective model structure on $\cha{G}$.

\begin{lemma}\label{lemma-G-homology and direct limits}
Assume each $G_i$ is finitely presented. Up to a product, $G$-homology commutes with direct limits. That is, if $\{X_j\}_{j \in J}$ is a directed system of complexes, then $$H_n[\Hom_{\cat{G}}(G,\varinjlim_{j \in J} X_j)] \cong \prod_{i \in I} \varinjlim_{j \in J} H_n[\Hom_{\cat{G}}(G_i,X_j)] $$
If the set of generators $\{G_i\} = \{G_1, G_2, \cdots , G_n\}$ is finite, then since direct limits commute with finite products we have
$$H_n[\Hom_{\cat{G}}(G,\varinjlim_{j \in J} X_j)] \cong \varinjlim_{j \in J} H_n[\Hom_{\cat{G}}(G,X_j)]$$
\end{lemma}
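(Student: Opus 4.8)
The plan is to reduce everything to the behavior of the functors $\Hom_{\cat{G}}(G_i,-)$ on directed systems, using that each $G_i$ is finitely presented. First I would recall that $G = \oplus_{i\in I} G_i$ gives a canonical isomorphism $\Hom_{\cat{G}}(G,Y) \cong \prod_{i\in I}\Hom_{\cat{G}}(G_i,Y)$, natural in $Y$, and that this isomorphism is an isomorphism of chain complexes when $Y$ is a complex. Since homology of a product of complexes of abelian groups is the product of the homologies (products are exact in $\textbf{Ab}$), we immediately get
\[
H_n[\Hom_{\cat{G}}(G,Z)] \cong \prod_{i\in I} H_n[\Hom_{\cat{G}}(G_i,Z)]
\]
for any complex $Z$, and in particular for $Z = \varinjlim_{j} X_j$.

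**The finitely presented input.** The next step is the heart of the matter: for each fixed $i$, because $G_i$ is finitely presented, the functor $\Hom_{\cat{G}}(G_i,-) : \cat{G} \xrightarrow{} \textbf{Ab}$ preserves direct limits. Applying this degreewise to the directed system of complexes $\{X_j\}_{j\in J}$, and using that direct limits in $\cha{G}$ are computed degreewise, we obtain an isomorphism of complexes of abelian groups
\[
\Hom_{\cat{G}}(G_i,\varinjlim_{j} X_j) \cong \varinjlim_{j}\Hom_{\cat{G}}(G_i,X_j).
\]
Then I would invoke that in $\textbf{Ab}$ (indeed in any Grothendieck category) direct limits are exact, hence commute with homology: $H_n[\varinjlim_{j}\Hom_{\cat{G}}(G_i,X_j)] \cong \varinjlim_{j} H_n[\Hom_{\cat{G}}(G_i,X_j)]$. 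Chaining these isomorphisms with the product decomposition from the first paragraph yields the displayed formula
\[
H_n[\Hom_{\cat{G}}(G,\varinjlim_{j} X_j)] \cong \prod_{i\in I}\varinjlim_{j} H_n[\Hom_{\cat{G}}(G_i,X_j)].
\]

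**The finite case.** For the last assertion, when $\{G_i\} = \{G_1,\dots,G_n\}$ is finite, the product $\prod_{i}$ over $I$ is a finite product, and finite products commute with direct limits (finite products are finite colimits, hence a finite product of directed colimits is the directed colimit of the finite products — or simply because a finite product in $\textbf{Ab}$ is a finite direct sum). Pulling the colimit outside the finite product and then re-assembling $\prod_{i} H_n[\Hom_{\cat{G}}(G_i,X_j)] \cong H_n[\Hom_{\cat{G}}(G,X_j)]$ for each $j$ gives $H_n[\Hom_{\cat{G}}(G,\varinjlim_{j} X_j)] \cong \varinjlim_{j} H_n[\Hom_{\cat{G}}(G,X_j)]$.

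**Main obstacle.** There is no serious obstacle here; the only point requiring care is bookkeeping the order of the functors — product over $I$, direct limit over $J$, and homology $H_n$ — and being explicit that $H_n$ and $\varinjlim_J$ commute (exactness of filtered colimits in $\textbf{Ab}$) while $H_n$ and $\prod_I$ commute only because we are in $\textbf{Ab}$ where products are exact, and that $\varinjlim_J$ and $\prod_I$ do \emph{not} commute in general, which is precisely why the product must remain on the outside unless $I$ is finite. I would state these three commutation facts as the skeleton of the proof and let the isomorphisms compose.
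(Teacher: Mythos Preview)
Your proposal is correct and follows essentially the same approach as the paper: decompose $\Hom_{\cat{G}}(G,-)$ as $\prod_i \Hom_{\cat{G}}(G_i,-)$, use that homology in $\textbf{Ab}$ commutes with both products and direct limits, and use that each $\Hom_{\cat{G}}(G_i,-)$ preserves direct limits since $G_i$ is finitely presented. The paper's proof is a one-line chain of exactly these isomorphisms; your write-up simply makes the commutation facts more explicit.
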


\begin{proof} For complexes of abelian groups, homology commutes with products and direct limits. Also, the $G_i$ are assumed finitely presented, so we have isomorphisms:
$$H_n[\Hom_{\cat{G}}(G,\varinjlim_{j \in J} X_j)] \cong \prod_{i \in I} H_n[\Hom_{\cat{G}}(G_i,\varinjlim_{j \in J} X_j)] \cong \prod_{i \in I} \varinjlim_{j \in J} H_n[\Hom_{\cat{G}}(G_i,X_j)].$$
\end{proof}

\begin{proposition}\label{prop-direct limits}
Assume each $G_i$ is finitely presented. Then the following hold.
\begin{enumerate}
\item  The $G$-acyclic complexes are closed under direct limits.
\item  Direct limits of $G$-monomorphisms are again $G$-monomorphisms.
\end{enumerate}
\end{proposition}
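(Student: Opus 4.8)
The plan is to prove both parts by reducing them to the statement of Lemma~\ref{lemma-G-homology and direct limits} together with the characterization of $G$-acyclicity from Lemma~\ref{lemma-properties G-exact}(1).

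For part (1), suppose $\{X_j\}_{j \in J}$ is a directed system of $G$-acyclic complexes. By Lemma~\ref{lemma-properties G-exact}(1), each $X_j$ is $G$-acyclic iff $\Hom_{\cat{G}}(G,X_j)$ is exact, iff each $H_n[\Hom_{\cat{G}}(G_i,X_j)]$ vanishes (using $\Hom_{\cat{G}}(G,-) \cong \prod_i \Hom_{\cat{G}}(G_i,-)$). Now I would simply invoke Lemma~\ref{lemma-G-homology and direct limits}: since each $G_i$ is finitely presented, $H_n[\Hom_{\cat{G}}(G,\varinjlim_j X_j)] \cong \prod_i \varinjlim_j H_n[\Hom_{\cat{G}}(G_i,X_j)]$, and the right-hand side vanishes because a direct limit (even a product of direct limits) of zero groups is zero. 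Hence $\Hom_{\cat{G}}(G,\varinjlim_j X_j)$ is exact, so $\varinjlim_j X_j$ is $G$-acyclic. One technical point to record: $\varinjlim$ here is the direct limit of a directed system \emph{in} $\cha{G}$, which is computed degreewise and is exact since $\cat{G}$ is Grothendieck, so it makes sense to speak of the $G$-acyclic complexes being ``closed under'' this operation.

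For part (2), suppose we have a directed system of $G$-monomorphisms; more precisely, a directed system of short exact sequences $0 \to A_j \xrightarrow{f_j} B_j \to C_j \to 0$ in $\cat{G}$ with each $f_j$ a $G$-monomorphism (equivalently, each $C_j \rightarrowtail$ is understood so that the sequence is $G$-exact; so $0 \to \Hom_{\cat{G}}(G,A_j) \to \Hom_{\cat{G}}(G,B_j) \to \Hom_{\cat{G}}(G,C_j) \to 0$ is exact). Passing to the direct limit, since $\cat{G}$ is Grothendieck the sequence $0 \to \varinjlim A_j \xrightarrow{\varinjlim f_j} \varinjlim B_j \to \varinjlim C_j \to 0$ is again short exact in $\cat{G}$, so it only remains to check it is $G$-exact, i.e. that $\Hom_{\cat{G}}(G,\varinjlim A_j) \to \Hom_{\cat{G}}(G,\varinjlim B_j)$ remains injective with the expected cokernel. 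Applying Lemma~\ref{lemma-G-homology and direct limits}'s underlying observation — that $\Hom_{\cat{G}}(G_i,-)$ commutes with direct limits when $G_i$ is finitely presented, and that filtered colimits of abelian groups are exact — we get $\Hom_{\cat{G}}(G,\varinjlim A_j) \cong \prod_i \varinjlim_j \Hom_{\cat{G}}(G_i, A_j)$ and similarly for $B_j, C_j$, and the short exactness of $0 \to \prod_i \varinjlim_j \Hom_{\cat{G}}(G_i,A_j) \to \prod_i \varinjlim_j \Hom_{\cat{G}}(G_i,B_j) \to \prod_i \varinjlim_j \Hom_{\cat{G}}(G_i,C_j) \to 0$ follows because exactness is preserved by filtered colimits and by arbitrary products in $\textbf{Ab}$. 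Thus $\varinjlim f_j$ is a $G$-monomorphism.

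The main obstacle, such as it is, is mostly bookkeeping: making sure the ``directed system of $G$-monomorphisms'' is set up as a directed system in the arrow category (or of short exact sequences), and being careful that the relevant exactness properties — of filtered colimits and of products in $\textbf{Ab}$, and of filtered colimits in the Grothendieck category $\cat{G}$ — are all available. Once the statement is phrased correctly there is no real difficulty; the finite-presentation hypothesis on the $G_i$ is exactly what makes $\Hom_{\cat{G}}(G_i,-)$ commute with the colimit, which is the whole content. One could alternatively deduce (2) from (1) by the thickness of the $G$-acyclic class, but the direct argument above is cleaner.
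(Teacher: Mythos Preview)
Your argument is correct and, for part~(1), identical to the paper's. For part~(2) you give a direct computation, but the paper actually takes the shortcut you mention at the end: it regards each $G$-exact sequence $\class{E}_j$ as a $G$-acyclic three-term chain complex and then applies part~(1) to the directed system $\{\class{E}_j\}$, so that the limit complex is $G$-acyclic, i.e.\ the limit sequence is $G$-exact. Your direct argument is exactly what this reduction unpacks to, so the two are equivalent in content; the paper's version is just more compressed. One small correction: the mechanism in that alternative is not ``thickness'' of the $G$-acyclic class but simply the reinterpretation of a short $G$-exact sequence as a $G$-acyclic complex together with part~(1).
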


\begin{proof}
The first statement follows from Lemma~\ref{lemma-G-homology and direct limits}.
For the second, suppose $f$ is a monomorphism sitting in an exact sequence $\class{E} : 0 \xrightarrow{} A \xrightarrow{f} B \xrightarrow{g} C \xrightarrow{} 0$ which happens to be a directed limit of $G$-exact sequences $\class{E}_j : 0 \xrightarrow{} A_j \xrightarrow{} B_j \xrightarrow{} C_j \xrightarrow{} 0$. Interpreting each $\class{E}_j$ as a $G$-acyclic chain complex the result follows from the first statement.
\end{proof}

By a \emph{transfinite composition} we mean a map of the form $X_0 \xrightarrow{f} \varinjlim X_{\alpha}$ where $X : \lambda \xrightarrow{} \cat{G}$ is a colimit-preserving functor and $\lambda$ is an ordinal. In this case $f$ is the transfinite composition of the $X_{\alpha} \xrightarrow{} X_{\alpha +1}$. If each of these $X_{\alpha} \rightarrowtail X_{\alpha+1}$ is a $G$-monomorphism then $f$ is a \emph{transfinite composition of $G$-monomorphisms}.
Furthermore, in this case we say that  $\varinjlim X_{\alpha}$ is a \emph{transfinite $G$-extension} of all the objects $X_0, X_{\alpha + 1}/X_{\alpha}$.

\begin{corollary}\label{cor-transfinite compositions and extensions of G-monos}
Assume each $G_i$ is finitely presented. Then the following hold.
\begin{enumerate}
\item The $G$-acyclic complexes are closed under transfinite $G$-extensions and direct sums.
\item An arbitrary transfinite composition of $G$-monomorphisms is again a $G$-monomorphism.
\end{enumerate}
\end{corollary}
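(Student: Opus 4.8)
The plan is to derive the corollary entirely from results already in hand: Proposition~\ref{prop-direct limits} (under finite presentability of the $G_i$, the $G$-acyclic complexes are closed under direct limits, and direct limits of $G$-monomorphisms are again $G$-monomorphisms), the two-out-of-three property for $G$-acyclic complexes in $\cha{G}_G$ from Lemma~\ref{lemma-properties G-exact}(3), and the closure of $G$-monomorphisms under composition from Proposition~\ref{prop-G-exact sequences-properties}(4). Everything is then a routine transfinite induction, using that the functors $X\colon\lambda\to\cha{G}$ defining the extensions are colimit-preserving, so $X_\beta=\varinjlim_{\alpha<\beta}X_\alpha$ at limit ordinals $\beta$.

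For the transfinite $G$-extension clause of (1), I would show by transfinite induction on $\alpha$ that each $X_\alpha$ is $G$-acyclic, and then apply the same argument to the top colimit $\varinjlim_{\alpha<\lambda}X_\alpha$. The base case $X_0$ is a hypothesis. At a successor $\alpha=\gamma+1$, the short exact sequence $X_\gamma\rightarrowtail X_\alpha\twoheadrightarrow X_\alpha/X_\gamma$ in $\cha{G}_G$ has $G$-acyclic outer terms ($X_\gamma$ by induction, the cokernel by hypothesis), so Lemma~\ref{lemma-properties G-exact}(3) gives that $X_\alpha$ is $G$-acyclic. At a limit ordinal, $X_\alpha=\varinjlim_{\gamma<\alpha}X_\gamma$ is a direct limit of $G$-acyclic complexes, hence $G$-acyclic by Proposition~\ref{prop-direct limits}(1). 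The direct sum statement is then subsumed: realize $\bigoplus_k X_k$ as a transfinite $G$-extension along the split inclusions, which are $G$-monomorphisms by Proposition~\ref{prop-G-exact sequences-properties}(2) with cokernels the individual summands; alternatively observe directly that $\bigoplus_k X_k$ is the filtered colimit of its finite sub-sums, each $G$-acyclic since $\Hom_{\cat{G}}(G,-)$ is additive, and invoke Proposition~\ref{prop-direct limits}(1).

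For (2), given a transfinite composition $X_0\to\varinjlim_{\alpha<\lambda}X_\alpha$ of $G$-monomorphisms, I would show by transfinite induction that $X_0\to X_\alpha$ is a $G$-monomorphism for all $\alpha$, then apply this at the top. Successor steps use that $X_0\to X_{\gamma+1}$ factors as $X_0\to X_\gamma\to X_{\gamma+1}$, a composite of $G$-monomorphisms (Proposition~\ref{prop-G-exact sequences-properties}(4)). For a limit ordinal $\alpha$, the key observation is that $X_0\to X_\alpha=\varinjlim_{\gamma<\alpha}X_\gamma$ is itself a direct limit, over the directed set $\{\gamma<\alpha\}$, of the maps $X_0\to X_\gamma$ --- the colimit of the constant diagram at $X_0$ over a directed set being $X_0$ again --- so Proposition~\ref{prop-direct limits}(2) applies. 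Honestly there is no genuinely hard step here: all the real content (and the only use of finite presentability) is packaged in Proposition~\ref{prop-direct limits}, and the only thing requiring a moment's care is correctly viewing $X_0\to X_\alpha$ at limit stages as a colimit of a morphism of diagrams, so that the ``direct limit of $G$-monomorphisms'' clause can be invoked. It is also convenient to establish (2) before (or alongside) (1), since the non-consecutive maps $X_\gamma\to X_\alpha$ in a transfinite $G$-extension are $G$-monomorphisms precisely by (2).
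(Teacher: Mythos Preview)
Your proposal is correct and follows essentially the same route as the paper: both arguments combine the two-out-of-three property from Lemma~\ref{lemma-properties G-exact}(3) at successor stages with Proposition~\ref{prop-direct limits} at limit stages, and handle (2) by recognizing $X_0\to\varinjlim_{\gamma<\alpha}X_\gamma$ as a direct limit of the $G$-monomorphisms $X_0\to X_\gamma$. The paper spells out the $\lambda=\omega$ case explicitly (writing the short exact sequence as a direct limit of $G$-exact sequences) before invoking transfinite induction, whereas you phrase the full induction directly; and your closing remark that (2) could be needed for (1) is overcautious---Proposition~\ref{prop-direct limits}(1) applies at limit ordinals without requiring the non-consecutive transition maps to be $G$-monomorphisms---but none of this affects correctness.
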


\begin{proof}
The $G$-acyclic complexes are always closed under $G$-extensions by Lemma~\ref{lemma-properties G-exact}. So they are closed under transfinite $G$-extensions by Proposition~\ref{prop-direct limits}. Direct sums are special cases of transfinite $G$-extensions.

For the second statement, we first note that a finite composition ($\lambda = n \in \N$) of $G$-monomorphisms is again a $G$-monomorphism by part~(4) of Proposition~\ref{prop-G-exact sequences-properties}. For $\lambda = \omega$, we want the map $X_0 \xrightarrow{f_{\omega}} \varinjlim_{n<\omega} X_{n}$ to also be a $G$-monomorphism. So we want the short exact sequence $$\class{E} : \ \ \ 0 \xrightarrow{} X_0 \xrightarrow{f_{\omega}} \varinjlim_{n<\omega} X_{n} \xrightarrow{} (\varinjlim_{n<\omega} X_{n})/X_0 \xrightarrow{} 0$$ to be $G$-exact. But this is the direct limit of the short exact sequences $$\class{E}_n : \ \ \ 0 \xrightarrow{} X_0 \xrightarrow{f_n} X_n \xrightarrow{} X_n/X_0 \xrightarrow{} 0$$ and these are $G$-exact because this is the finite case $\lambda = n$. So the $\lambda = \omega$ case holds by Proposition~\ref{prop-direct limits}. We see the result follows by transfinite induction.
\end{proof}

\subsection{Complete cotorsion pairs.}

The result here is taken, with only a few small adjustments for our situation,  from the original source~\cite{hovey} . We again use the notion of a \emph{small} cotorsion pair from~\cite{hovey} as well as the notation $I\text{-cell}$ and $I\text{-inj}$ from~\cite{hovey-model-categories}.

\begin{proposition}\label{prop-cotorsion pairs in the G-exact cat}
Consider the $G$-exact category $\cat{G}_G$ in the case that each $G_i$ is finitely presented.
Then a cotorsion pair $(\class{F},\class{C})$ in $\cat{G}_G$ is cogenerated by a set $\class{S}$ if and only if it is small with generating monomorphisms the set
$$I = \{0 \rightarrowtail G_i \}_{i \in I} \cup \{K_S \rightarrowtail P_S \twoheadrightarrow S \}_{S \in \class{S}}.$$ Here we have chosen for each $S \in \class{S}$, a $G$-exact sequence $K_S \rightarrowtail P_S \twoheadrightarrow S$ with $P_S$ a $G$-projective object. Such a cotorsion pair $(\class{F},\class{C})$ satisfies each of the following:
\begin{enumerate}
\item $(\class{F},\class{C})$ is functorially complete.

\item $\class{F}$ consists precisely of retracts of transfinite $G$-extensions of $\class{S}$.

\item $I\text{-inj}$ is precisely the class of all $G$-epimorphisms with kernel in $\class{C}$.
\end{enumerate}

\end{proposition}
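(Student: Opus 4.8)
The plan is to recognize the statement as an instance of Hovey's theory of small cotorsion pairs from \cite[\S6]{hovey}, transported to the $G$-exact category $\cat{G}_G$. By Corollary~\ref{cor-G-exacts form an exact category} and Appendix~\ref{appendix-proper classes}, the $G$-exact structure is literally a proper class of short exact sequences on the abelian category $\cat{G}$, which is exactly the setting of \cite{hovey}; so the task is to verify that the relevant hypotheses hold in $\cat{G}_G$ and then quote the conclusions. Two facts drive the machinery: every object of the Grothendieck category $\cat{G}$ is small, so Quillen's small object argument is available; and an arbitrary transfinite composition of $G$-monomorphisms is again a $G$-monomorphism (Corollary~\ref{cor-transfinite compositions and extensions of G-monos}) --- this is the one and only place the standing assumption ``each $G_i$ finitely presented'' is essential.

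First I would dispose of the equivalence. The cokernels of the maps in $I$ are $\cok I = \{G_i\}_{i \in I} \cup \class{S}$, and since each $G_i$ is $G$-projective (Corollary~\ref{cor-G-projective generator}) we have $\GExt^1_{\cat{G}}(G_i,-) = 0$, hence $\rightperp{(\cok I)} = \rightperp{\class{S}} = \class{C}$ in $\cat{G}_G$; so the cotorsion pair cogenerated by $\class{S}$ is the one whose right class is $\rightperp{(\cok I)}$. For the converse I would check the defining conditions of a \emph{small} cotorsion pair for this explicit $I$: the domains $0$ and $K_S$ are small (automatic); each map of $I$ is a $G$-monomorphism; and, using Corollary~\ref{cor-transfinite compositions and extensions of G-monos}, every map of $I\text{-cell}$ is a $G$-monomorphism whose cokernel is a transfinite $G$-extension of $\cok I$. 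This is precisely the assertion that $(\class{F},\class{C})$ is small with generating monomorphisms $I$.

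Then (1)--(3) follow by running the small object argument as in \cite[Theorem~6.5]{hovey}. For (1): applying it to $0 \rightarrowtail X$ factors that map as $0 \rightarrowtail Z \xrightarrow{p} X$ with $(0 \rightarrowtail Z) \in I\text{-cell}$ and $p \in I\text{-inj}$; then $Z$ is a transfinite $G$-extension of $\cok I$, so $Z \in \class{F}$ by the Eklof-type closure lemma \cite[Lemma~6.2]{hovey} (the class $\class{F}$ is closed under transfinite $G$-extensions and retracts, since $\GExt^1_{\cat{G}}(-,C)$ turns a transfinite $G$-extension into an inverse limit of surjections), while $p$ is a $G$-epimorphism whose kernel $K$ lies in $\rightperp{(\cok I)} = \class{C}$ because $(K \rightarrowtail 0) \in I\text{-inj}$; the resulting $G$-exact sequence $K \rightarrowtail Z \twoheadrightarrow X$ is the functorial ``enough projectives'' factorization, and the dual factorization of $X \twoheadrightarrow 0$ gives ``enough injectives''. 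For (3): a map has the right lifting property against every $0 \rightarrowtail G_i$ iff it is a $G$-epimorphism (the $G_i$ generate $\cat{G}_G$, Corollary~\ref{cor-G-projective generator}), and a $G$-epimorphism further lifts against every $K_S \rightarrowtail P_S$ iff its kernel lies in $\rightperp{\class{S}} = \class{C}$. For (2): the inclusion ``$\supseteq$'' is the closure lemma, and ``$\subseteq$'' follows by writing, for $F \in \class{F}$, a $G$-exact sequence $C' \rightarrowtail F' \twoheadrightarrow F$ from part (1) with $C' \in \class{C}$ and $F'$ a transfinite $G$-extension of $\cok I$; this represents an element of $\GExt^1_{\cat{G}}(F,C') = 0$, so it splits and $F$ is a retract of $F'$. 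That $F'$ may be taken to be a transfinite $G$-extension of $\class{S}$ rather than of $\cok I = \{G_i\} \cup \class{S}$ is the routine observation, exactly as in the abelian case, that the $G_i$-steps are split extensions (each $G_i$ being $G$-projective) and hence can be absorbed.

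The main obstacle is precisely the step already isolated: showing $I\text{-cell} \subseteq \{G\text{-monomorphisms}\}$ with cokernels transfinite $G$-extensions of $\cok I$. Without this, ``the cokernel of an $I\text{-cell}$ map'' is not even defined, and Hovey's arguments --- which in the original source take place in an abelian category --- do not transcribe to the exact category $\cat{G}_G$. This is supplied by Corollary~\ref{cor-transfinite compositions and extensions of G-monos}, which in turn rests on Proposition~\ref{prop-direct limits} (direct limits of $G$-monomorphisms are $G$-monomorphisms) and genuinely fails for generators that are not finitely presented. Everything else is a faithful transcription of \cite[\S6]{hovey}.
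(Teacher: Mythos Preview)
Your approach is essentially identical to the paper's: both recognize the proposition as a direct application of Hovey's \cite[Theorem~6.5]{hovey}, with the single nontrivial hypothesis being that transfinite compositions of $G$-monomorphisms remain $G$-monomorphisms (Corollary~\ref{cor-transfinite compositions and extensions of G-monos}), which is where finite presentability of the $G_i$ enters. The paper's proof is in fact terser than yours---it simply cites \cite[Theorem~6.5]{hovey} for (1) and (2) and leaves (3) to the reader, whereas you spell out the small object argument and the lifting verification for $I\text{-inj}$.

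One caveat: your final ``absorption'' remark for (2) is not correct as stated. The fact that each $G_i$ is $G$-projective makes a $G$-extension $X_\alpha \rightarrowtail X_{\alpha+1} \twoheadrightarrow G_i$ split, but this does \emph{not} let you delete that step from the filtration: the adjacent quotient $X_{\alpha+2}/X_\alpha$ need not lie in $\class{S}$, and there is no general mechanism to rewrite a transfinite $G$-extension of $\{G_i\}\cup\class{S}$ as one of $\class{S}$ alone (take $\class{S}=\{0\}$ for a counterexample). What the argument actually yields---and what Hovey's theorem gives---is that $\class{F}$ consists of retracts of transfinite $G$-extensions of $\cok I = \{G_i\}\cup\class{S}$. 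The paper's statement (2) is slightly imprecise on this point as well; it simply defers to the proof of \cite[Theorem~6.5]{hovey} without addressing the discrepancy. In every application in the paper the cogenerating set already contains the generators (or objects built from them), so the distinction is immaterial in practice, but your justification for bridging the gap does not work.
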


\begin{proof}
Note that we can find the $G$-exact sequences $K_S \rightarrowtail P_S \twoheadrightarrow S$ with each $P_S$ a $G$-projective by using Corollary~\ref{cor-G-projective generator}.
We see that the functors $\GExt^1_{\cha{G}}(P_S,-)$ and $\GExt^1_{\cha{G}}(G_i,-)$ vanish. So it is easy to see that $\class{S}$ cogenerates the cotorsion pair iff the given set $I$ forms a set of generating monomorphisms in the sense of~\cite[Definition~6.4]{hovey}.

By Corollary~\ref{cor-transfinite compositions and extensions of G-monos} we have that transfinite compositions of $G$-monomorphisms are again $G$-monomorphisms. So by~\cite[Theorem~6.5]{hovey} we get that $(\class{F},\class{C})$ is a functorially complete cotorsion pair. The proof there shows that $\class{F}$ consists precisely of retracts of transfinite $G$-extensions of objects in $\class{S}$.

It is left to see that $I\text{-inj}$ is precisely the class of all $G$-epimorphisms with kernel in $\class{C}$. Showing that everything in $I\text{-inj}$ is a $G$-epimorphism with kernel in $\class{C}$ is formally similar to the first claim in the proof of Theorem~\ref{them-projective model for G-derived}. The converse is similar to the argument given in the last paragraph of the proof of Corollary~\ref{cor-projective model for G-derived}. We leave the details to the reader.
\end{proof}

\begin{remark}\label{remark on cotorsion pairs of complexes}
We note that Proposition~\ref{prop-cotorsion pairs in the G-exact cat} applies not just to $\cat{G}_G$ but also to $\cha{G}_G$ by Lemma~\ref{lemma-generators in Ch(G)}. This is because each $D^n(G_i)$ is a finitely presented complex whenever each $G_i$ is finitely presented. In particular, any cotorsion pair in $\cha{G}_G$ that is cogenerated by a set is complete.
\end{remark}

\subsection{Injectives in $\boldsymbol{\cat{G}_G}$ and $\boldsymbol{\text{Ch}(\cat{G})_G}$.}\label{subsection-G-injective objects}

We need to show that the exact categories $\cat{G}_G$ and $\cha{G}_G$ have enough injective objects. Following our language for the projective case, we will call these objects \emph{$G$-injective}. We will use the theory of purity summarized in Appendix~\ref{appendix-lambda pure}. The appendix shows that when $\cat{G}$ is locally finitely presentable (= locally $\omega$-presentable) we have a well-behaved notion of \textbf{pure} (= $\omega$-pure) subobjects $P \subseteq X$ in $\cat{G}$. In particular, we get that pure monomorphisms are closed under directed colimits (= $\omega$-directed colimits) in $\cat{G}$ by Proposition~\ref{prop-lambda pure short exact sequences}.

Note that any pure exact sequence $0 \xrightarrow{} A \xrightarrow{} B \xrightarrow{} C \xrightarrow{} 0$ in $\cat{G}$ is automatically a $G$-exact sequence. This follows from Proposition~\ref{prop-lambda pure short exact sequences}, our assumption that each $G_i$ is finitely presented, and the fact that direct products of short exact sequences (of abelian groups) are still short exact sequences. In particular, any pure subobject is automatically a $G$-subobject.

\begin{remark}\label{remark-lambda presented equals lambda generated}
For \emph{any} Grothendieck category $\cat{G}$ there exist arbitrarily large regular cardinals $\lambda$ such that the $\lambda$-presented objects coincide with the $\lambda$-generated objects. The author thanks Ji\v{r}\'i Rosick\'y for providing the following reason for this statement: Let $\cat{G}_{\text{mono}}$ denote the category consisting of the same objects as $\cat{G}$ but with morphisms only the monomorphisms of $\cat{G}$. Then for any $\lambda$, the $\lambda$-presented objects of $\cat{G}_{\text{mono}}$ coincide exactly with the $\lambda$-generated objects of $\cat{G}$. Moreover we note  $\cat{G}_{\text{mono}}$ is an accessible category by~\cite[Local Generation Theorem~1.70]{adamek-rosicky}. The embedding functor $\cat{G}_{\text{mono}} \xrightarrow{} \cat{G}$ is an \emph{accessible functor} in the sense of~\cite[Definition~2.16]{adamek-rosicky}. Therefore, the Uniformization Theorem~\cite[Theorem~2.19 and Remark]{adamek-rosicky} applies which means there are arbitrarily large regular cardinals $\lambda$ for which this embedding is $\lambda$-accessible and preserves $\lambda$-presented objects. This means exactly that there exist arbitrarily large regular cardinals $\lambda$ such that the $\lambda$-presented objects coincide with the $\lambda$-generated objects. In fact, it follows from~\cite[Remark~2.20]{adamek-rosicky} that if $\gamma$ is a regular cardinal for which $\lambda \triangleleft \gamma$, that is $\lambda$ is \emph{sharply smaller} than $\gamma$ in the sense of~\cite[Definition~2.12]{adamek-rosicky}, then the $\gamma$-presented objects coincide with the $\gamma$-generated objects too.
\end{remark}

Note that for any $\gamma$ as in Remark~\ref{remark-lambda presented equals lambda generated} the notion of $\gamma$-presented (= $\gamma$-generated) becomes a substitute for ``cardinality $< \gamma$''. In particular, the class of $\gamma$-presented objects is closed under quotients and subobjects. We also have that, up to isomorphism, there is just a set of $\gamma$-presented objects.

\begin{setup}\label{setup-gamma}
We now specify for our locally finitely presentable category $\cat{G}$ a regular cardinal $\gamma$ which will be of use. We fix a regular cardinal $\gamma$ with each of the following properties:
\begin{enumerate}
\item The $\gamma$-presented objects coincide with the $\gamma$-generated objects.
\item Whenever we have a subobject $S \subseteq X$ where $S$ is $\gamma$-generated, there exists a pure subobject $P \subseteq X$ which is also $\gamma$-generated and which contains $S$.
\end{enumerate}
\end{setup}

Lets now justify why we can choose such a cardinal $\gamma$. First, from the above Remark~\ref{remark-lambda presented equals lambda generated}
we can find a regular cardinal $\lambda$ such that whenever $\gamma$ is a regular cardinal with $\lambda \triangleleft \gamma$, then the $\gamma$-presented objects coincide with the $\gamma$-generated objects. Since our category $\cat{G}$ is locally $\omega$-presentable it is also locally $\lambda$-presentable, (since $\omega \leq \lambda$ and~\cite[Remark~1.20]{adamek-rosicky}). So by~\cite[Theorem~2.33]{adamek-rosicky} we are guaranteed the existence of arbitrarily large regular cardinals $\gamma \triangleright \lambda$ with the following property: Whenever we have a subobject $S \subseteq X$ where $S$ is $\gamma$-generated, there exists a $\lambda$-pure subobject $P \subseteq X$ which is also $\gamma$-generated and which contains $S$. However, any $\lambda$-pure $P$ is also pure because $\omega \leq \lambda$ and~\cite[Remark~(3) pp.~85]{adamek-rosicky}. (However, we warn the reader that there is a misprint in~\cite[Remark~(3) pp.~85]{adamek-rosicky}. The inequality goes the other way.) But we are done.

The main purpose for constructing $\gamma$ in Setup~\ref{setup-gamma} is to use its properties~(i) and~(ii) to show that any $G$-acyclic complex is a transfinite $G$-extension of $\gamma$-presented $G$-acyclic complexes. Although perhaps overkill, we will also now use $\gamma$ to show that $\cat{G}_G$ has enough injectives.

\begin{proposition}\label{prop-G-exact categories have enough injectives}
Let $\gamma$ be as in Setup~\ref{setup-gamma} and let $\class{S}$ be a set of isomorphic representatives for the class of all $\gamma$-presented objects. Then $\class{S}$ cogenerates the injective cotorsion pair $(\class{A},\class{I})$ in $\cat{G}_G$. That is, $\class{A}$ consists of all objects of $\cat{G}$, while $\class{I} = \rightperp{\class{S}}$ is precisely the class of injective objects of $\cat{G}_G$. We call these objects \textbf{$\boldsymbol{G}$-injective}.
$(\class{A},\class{I})$ is complete, meaning $\cat{G}_G$ has enough $G$-injectives.
\end{proposition}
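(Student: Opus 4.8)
The strategy is to recognise $(\class{A},\class{I})$ as the cotorsion pair cogenerated by the \emph{set} $\class{S}$, so that Proposition~\ref{prop-cotorsion pairs in the G-exact cat} does most of the work; the substantive point is to show that \emph{every} object of $\cat{G}$ is a transfinite $G$-extension of objects in $\class{S}$. I prove this filtration statement first and then assemble the proposition from it.

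\textbf{The filtration.} Fix $A\in\cat{G}$ and well-order all morphisms from the generators into $A$ as $\{s_\beta\colon G_{i_\beta}\rightarrow A\}_{\beta<\mu}$ with $\mu$ a limit ordinal; the images $\im s_\beta$ generate $A$. I build by transfinite recursion a continuous chain $0=A_0\subseteq A_1\subseteq\cdots\subseteq A_\alpha\subseteq\cdots$ ($\alpha\le\mu$) of $G$-subobjects of $A$. At a successor $\alpha+1$: working in the quotient $A/A_\alpha$, the image $\bar{S}$ of $\im s_\alpha$ is $\gamma$-generated, so property~(ii) of Setup~\ref{setup-gamma} supplies a pure --- hence $G$- --- subobject $\bar{P}\subseteq A/A_\alpha$ which is $\gamma$-generated and contains $\bar{S}$; let $A_{\alpha+1}\subseteq A$ be the preimage of $\bar{P}$, so $A_{\alpha+1}/A_\alpha\cong\bar{P}$. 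Then $A_\alpha\subseteq_G A_{\alpha+1}$ by Proposition~\ref{prop-G-purity properties}(2), and $A_{\alpha+1}\subseteq_G A$ by Proposition~\ref{prop-G-purity properties}(3) (from $A_\alpha\subseteq_G A$ together with $\bar{P}\subseteq_G A/A_\alpha$). At a limit $\alpha$: put $A_\alpha=\varinjlim_{\beta<\alpha}A_\beta$, which is a $G$-subobject of $A$ since direct limits of $G$-monomorphisms are $G$-monomorphisms (Proposition~\ref{prop-direct limits}(2)). Every $\im s_\beta$ is absorbed into $A_{\beta+1}$, so $A_\mu=A$; transfinite compositions of $G$-monomorphisms are $G$-monomorphisms (Corollary~\ref{cor-transfinite compositions and extensions of G-monos}(2)); and each consecutive quotient $A_{\alpha+1}/A_\alpha\cong\bar{P}$ is $\gamma$-generated, hence $\gamma$-presented by Setup~\ref{setup-gamma}(i), hence isomorphic to an object of $\class{S}$. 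Thus the chain $(A_\alpha)_{\alpha\le\mu}$ exhibits $A$ as a transfinite $G$-extension of objects in $\class{S}$.

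\textbf{Assembling the cotorsion pair.} Since each $G_i$ is finitely presented and $\class{S}$ is, up to isomorphism, a set, Proposition~\ref{prop-cotorsion pairs in the G-exact cat} applies to $\class{S}$: it cogenerates a functorially complete cotorsion pair $(\class{F},\class{C})$ in $\cat{G}_G$ with $\class{C}=\rightperp{\class{S}}$ and with left class $\class{F}$ equal to the retracts of transfinite $G$-extensions of $\class{S}$. By the previous paragraph $\class{F}$ contains every object, so $\class{F}=\class{A}$ and hence $\class{C}=\rightperp{\class{A}}=\rightperp{\class{S}}=\class{I}$. Now $I$ is injective in the exact category $\cat{G}_G$ exactly when $\GExt^1_{\cat{G}}(A,I)=0$ for all $A\in\cat{G}$, i.e.\ exactly when $I\in\rightperp{\class{A}}=\class{I}$, so $\class{I}$ is precisely the class of $G$-injectives; and completeness of $(\class{A},\class{I})$ gives, for each $X\in\cat{G}$, a $G$-exact sequence $X\rightarrowtail I\twoheadrightarrow A'$ with $I\in\class{I}$ --- the requirement $A'\in\class{A}$ being vacuous --- which is exactly the statement that $\cat{G}_G$ has enough $G$-injectives.

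\textbf{The main obstacle.} The delicate part is the recursion above, and the crucial idea is to apply Setup~\ref{setup-gamma}(ii) \emph{relative} to $A_\alpha$ --- inside $A/A_\alpha$ rather than inside $A$ --- so that the consecutive quotients remain $\gamma$-generated even at late stages, where $A_\alpha$ itself need not be $\gamma$-generated; it is exactly here that Proposition~\ref{prop-G-purity properties}(3) is needed, to transport $G$-purity of $\bar{P}$ back to $A_{\alpha+1}$. The limit steps need only that $G$-monomorphisms are closed under direct limits (Proposition~\ref{prop-direct limits}(2)). Everything else is a direct appeal to Proposition~\ref{prop-cotorsion pairs in the G-exact cat} and to the definition of an injective object in an exact category.
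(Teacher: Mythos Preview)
Your proof is correct and follows essentially the same approach as the paper's: both reduce to showing every object is a transfinite $G$-extension of $\gamma$-presented objects via Proposition~\ref{prop-cotorsion pairs in the G-exact cat}, and both build the filtration by applying Setup~\ref{setup-gamma}(ii) in successive quotients. The only cosmetic differences are that the paper works with \emph{pure} subobjects throughout (citing Proposition~\ref{prop-purity properties} rather than Proposition~\ref{prop-G-purity properties} and Proposition~\ref{prop-direct limits}) and is less explicit about why the chain exhausts the object, whereas you convert to $G$-subobjects immediately and index the recursion by morphisms from the generators to guarantee exhaustion.
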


\begin{proof}
By Proposition~\ref{prop-cotorsion pairs in the G-exact cat} we know that $\class{S}$ cogenerates a complete cotorsion pair $(\leftperp{(\rightperp{\class{S}})}, \rightperp{\class{S}})$ where $\leftperp{(\rightperp{\class{S}})}$ consist precisely of retracts of transfinite $G$-extensions of $\class{S}$. Letting $\class{A}$ denote the class of all objects of $\cat{G}$ we will be done if we can show $\class{A} \subseteq \leftperp{(\rightperp{\class{S}})}$. By~\cite[Lemma~6.2]{hovey} it suffices to show that every object in $\class{A}$ is a transfinite $G$-extension of objects in $\class{S}$. But since each $G_i$ is finitely presented, we note that pure exact sequences are automatically $G$-exact. So it is enough to show that any object is a transfinite pure-extension of $\gamma$-presented objects.

So let $M$ be any given object.
First note that assuming $M \neq 0$, we can always find a nonzero pure subobject $P_0 \subseteq M$ with $P_0$ $\gamma$-presented. Assuming $P_0 \neq M$, we can do the same to $M/P_0$ to get a pure $P_1/P_0 \subseteq M/P_0$ with $P_1/P_0$ $\gamma$-presented. Assuming we are not done, we continue to construct a strictly increasing $0 \neq P_0 \subseteq P_1 \subseteq P_2 \subseteq \cdots$. Note each $P_n \subseteq M$ is pure by part~(3) of Proposition~\ref{prop-purity properties}. Then set $P_{\omega} = \cup_{n < \omega} P_n$ and note it is also pure by part~(4) of Proposition~\ref{prop-purity properties}. In this way we continue by transfinite induction to get $M = \cup_{\alpha < \lambda} P_{\alpha}$ for some $\lambda$ where each $P_{\alpha} \subseteq P_{\alpha+1}$ is pure.
\end{proof}

\begin{remark}
No matter what our choice is for the generator $G = \oplus_{i \in I} G_i$, it is the same set $\class{S}$ cogenerating the injective cotorsion pair $(\class{A},\class{I})$ (as long as each $G_i$ is finitely presented). But a different choice of generating set $\{G_i\}$ will of course change the proper class of short exact sequences in $\cat{G}_G$. Consequently, this changes the class $\rightperp{\class{S}}$ of $G$-injectives. (It of course also changes the $G$-projectives).
\end{remark}

Note that because of Lemma~\ref{lemma-generators in Ch(G)}, the above Proposition~\ref{prop-G-exact categories have enough injectives} also applies to the chain complex category $\cha{G}_G$. That is, there are enough $G$-injective complexes. As in Lemma~\ref{lemma-complexes that are projective} we have the following classification of $G$-injective complexes.

\begin{lemma}\label{lemma-complexes that are injective}
Call a chain complex $X$ in $\cha{G}$ a \textbf{$\boldsymbol{G}$-injective} complex if it is injective in the exact category $\cha{G}_G$. The following are equivalent:
\begin{enumerate}
\item $X$ is $G$-injective.
\item $X$ is $G$-acyclic with each $Z_nX$ a $G$-injective.
\item $X$ is isomorphic to a split exact complex with $G$-injective components. That is, $X \cong \oplus_{n \in \Z} D^n(I_n)$ where each $I_n$ is a $G$-injective.
\item $X$ is a contractible complex with each $X_n$ $G$-injective.
\end{enumerate}
We note that there are enough $G$-injective complexes. This follows from Proposition~\ref{prop-G-exact categories have enough injectives} and Lemma~\ref{lemma-generators in Ch(G)}.
\end{lemma}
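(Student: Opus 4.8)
The plan is to dualize the proof of Lemma~\ref{lemma-complexes that are projective}, interchanging the roles of $G$-projective/epimorphism/direct sum with $G$-injective/monomorphism/product. The engine is a canonical embedding of an arbitrary complex into a contractible complex of $G$-injectives, dual to the canonical $G$-epimorphism $\bigoplus_n D^n(P_n)\twoheadrightarrow X$ used there. Concretely, for any $X\in\cha{G}$ there is a canonical degreewise-split monomorphism $X\rightarrowtail\prod_{n\in\Z}D^{n+1}(X_n)$ whose $n$-th coordinate is the chain map corresponding under the standard adjunction $\Hom_{\cha{G}}(X,D^{n+1}(A))\cong\Hom_{\cat{G}}(X_n,A)$ to $1_{X_n}$; in degree $k$ this reads $X_k\to X_k\oplus X_{k-1}$, $x\mapsto (x,d_kx)$. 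Now invoke Proposition~\ref{prop-G-exact categories have enough injectives}, which applies to $\cha{G}_G$ as well by Lemma~\ref{lemma-generators in Ch(G)} (and the fact that each $D^n(G_i)$ is finitely presented when each $G_i$ is), to choose for each $n$ a $G$-monomorphism $\iota_n\colon X_n\rightarrowtail I_n$ into a $G$-injective object of $\cat{G}_G$. Post-composing with $\prod_n D^{n+1}(\iota_n)$ yields a map $X\rightarrowtail\prod_n D^{n+1}(I_n)$ which in each degree is the biproduct $\iota_k\oplus\iota_{k-1}$ of two $G$-monomorphisms of $\cat{G}_G$ preceded by a split monomorphism; since a finite biproduct of $G$-monomorphisms is a $G$-monomorphism (write $\iota_k\oplus\iota_{k-1}$ as a composite of pushouts of the $\iota$'s along split monomorphisms and apply Proposition~\ref{prop-G-exact sequences-properties}(3),(4)), the composite is a $G$-monomorphism of complexes. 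Finally, $\prod_n D^{n+1}(I_n)$ is contractible, has $G$-injective components $I_k\oplus I_{k-1}$, and is itself $G$-injective: each disk $D^{n+1}(I_n)$ is $G$-injective in $\cha{G}_G$ by Lemma~\ref{lemma-disk adjunctions}(2), and products of injectives are injective in any exact category by the dual of \cite[Corollary~11.7]{buhler-exact categories}. (One uses throughout that a degreewise product of disks coincides with the corresponding direct sum, so that ``$X\cong\bigoplus_n D^n(I_n)$'' and ``$X\cong\prod_n D^n(I_n)$'' are the same assertion.)

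With this in hand the cycle of implications runs as follows. For (1)$\Rightarrow$(4): if $X$ is $G$-injective the embedding above splits, so $X$ is a retract of the contractible complex $\prod_n D^{n+1}(I_n)$; contractibility passes to retracts (contracting homotopies are additive data), and each $X_n$, being a retract of the $G$-injective object $I_n\oplus I_{n-1}$, is $G$-injective since injectives of an exact category are closed under retracts, dual to \cite[Corollary~11.6]{buhler-exact categories}. For (4)$\Rightarrow$(3): splitting the idempotents $d_{n+1}s_n$ arising from a contraction exhibits $X_n\cong Z_nX\oplus Z_{n-1}X$ and hence $X\cong\bigoplus_n D^n(Z_{n-1}X)$, with each $Z_nX$ a retract of $X_n$, so $G$-injective. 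For (3)$\Rightarrow$(2): $\bigoplus_n D^n(B_n)$ is contractible, hence $G$-acyclic by Lemma~\ref{lemma-properties G-exact}(2), and its cycle objects are (isomorphic to) the $B_n$. For (2)$\Rightarrow$(3): when $X$ is $G$-acyclic with each $Z_nX$ a $G$-injective, the defining $G$-exact sequences $Z_nX\rightarrowtail X_n\twoheadrightarrow Z_{n-1}X$ split, and assembling the splittings gives $X\cong\bigoplus_n D^n(Z_{n-1}X)$. For (3)$\Rightarrow$(1): by Lemma~\ref{lemma-disk adjunctions}(2) each $D^n(B_n)$ with $B_n$ $G$-injective is $G$-injective in $\cha{G}_G$, and $\bigoplus_n D^n(B_n)=\prod_n D^n(B_n)$ is then a product of $G$-injectives, hence $G$-injective. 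The closing remark that $\cha{G}_G$ has enough $G$-injective complexes is immediate from the embedding constructed above (or from Proposition~\ref{prop-G-exact categories have enough injectives} together with Lemma~\ref{lemma-generators in Ch(G)}).

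The only genuinely non-formal point I anticipate is the verification, in the construction of the embedding, that $X\rightarrowtail\prod_n D^{n+1}(I_n)$ is a \emph{degreewise} $G$-monomorphism --- that is, that the relevant maps into finite biproducts remain admissible monomorphisms for the $G$-exact structure; everything afterward is the standard linear algebra of contractible complexes together with the exact-category facts about (co)products and retracts already quoted in the projective case. A minor bookkeeping point is to confirm that the hypothesis ``each $G_i$ finitely presented'' is precisely what licenses the use of Proposition~\ref{prop-G-exact categories have enough injectives} inside $\cha{G}_G$.
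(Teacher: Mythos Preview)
Your proof is correct and is precisely the dualization of the paper's proof of Lemma~\ref{lemma-complexes that are projective} that the paper implicitly relies on; the paper states Lemma~\ref{lemma-complexes that are injective} without proof, leaving the dual argument to the reader. Your construction of the admissible monomorphism $X\rightarrowtail\prod_n D^{n+1}(I_n)$ is the exact dual of the $G$-epimorphism $\bigoplus_n D^n(P_n)\twoheadrightarrow X$ obtained there via \cite[Corollary~2.7]{gillespie-recoll2}, and your verification that the composite is degreewise a $G$-monomorphism (a split mono followed by a finite biproduct of $G$-monos) fills in the one detail that is not purely formal.
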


\subsection{The injective model structure.}\label{subsec-inj model}

We now wish to construct an injective model structure for the $G$-derived category, assuming each $G_i$ is finitely presented.
The following lemma, which holds for arbitrary Grothendieck categories, will be used in the main proof. It is a generalization of~\cite[Lemma~V.3.3]{stenstrom}.

\begin{lemma}\label{lemma-surjective}
Let $\cat{G}$ be a locally $\lambda$-presentable Grothendieck category. Given an
epimorphism $g \mathcolon X \xrightarrow{} Y$ where $Y$ is
$\lambda$-generated, there exists a $\lambda$-generated subobject
$X' \subseteq X$ for which $g_{|X'} \mathcolon X' \xrightarrow{} Y$ is an
epimorphism.
\end{lemma}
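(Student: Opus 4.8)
The plan is to realise $X$ as a $\lambda$-directed union of its $\lambda$-generated subobjects and to transport this decomposition along $g$. First I would record the standard structural fact that, since $\cat{G}$ is locally $\lambda$-presentable, every object is a $\lambda$-filtered colimit of $\lambda$-presentable objects; taking images of such a presentation of $X$, it follows that $X = \bigcup_{\mu} X_{\mu}$, the union of the family $\{X_{\mu}\}$ of all $\lambda$-generated subobjects of $X$. Moreover this family is $\lambda$-directed: given any subfamily of cardinality $< \lambda$, its sum inside $X$ is the image of the coproduct of the members, hence a quotient of that coproduct, and a coproduct of fewer than $\lambda$ many $\lambda$-generated objects is $\lambda$-generated (being a quotient of a coproduct of $\lambda$-presentable objects, which is $\lambda$-presentable), as is any quotient of a $\lambda$-generated object. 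So $X = \varinjlim_{\mu} X_{\mu}$ as a $\lambda$-directed union of subobjects, in the standard way described in~\cite{adamek-rosicky}.

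Next I would push this through $g$. Put $Y_{\mu} = g(X_{\mu}) \subseteq Y$. If $X_{\mu} \subseteq X_{\nu}$ then $Y_{\mu} \subseteq Y_{\nu}$, so $\{Y_{\mu}\}$ is again a $\lambda$-directed family of subobjects of $Y$; and since $\cat{G}$ is Grothendieck (so direct limits are exact and images commute with directed unions) and $g$ is an epimorphism, $\bigcup_{\mu} Y_{\mu} = g\!\left(\bigcup_{\mu} X_{\mu}\right) = g(X) = Y$. Thus $Y = \varinjlim_{\mu} Y_{\mu}$ as a $\lambda$-directed union of subobjects.

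Finally I would invoke that $Y$ is $\lambda$-generated. By definition this means $\Hom_{\cat{G}}(Y,-)$ preserves colimits of $\lambda$-directed systems of monomorphisms, so applying it to $Y = \varinjlim_{\mu} Y_{\mu}$ shows that the identity $1_{Y}$ factors through one of the inclusions $Y_{\mu} \hookrightarrow Y$; since that inclusion is a monomorphism, $Y_{\mu} = Y$. Taking $X' = X_{\mu}$ then gives a $\lambda$-generated subobject $X' \subseteq X$ with $g_{|X'} \colon X' \xrightarrow{} Y$ an epimorphism, as claimed.

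The only real bookkeeping — and the step I expect to be the main (if modest) obstacle — is the justification that the poset of $\lambda$-generated subobjects of $X$ is $\lambda$-directed and cofinal in the subobject lattice, i.e. the closure of ``$\lambda$-generated'' under the coproducts and quotients used above; once those permanence properties are cited from~\cite{adamek-rosicky}, everything else is a short diagram chase using exactness of direct limits in a Grothendieck category.
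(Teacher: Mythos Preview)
Your proposal is correct and follows essentially the same route as the paper: write $X$ as a $\lambda$-directed union of its $\lambda$-generated subobjects, push this decomposition forward along $g$ to express $Y$ as a $\lambda$-directed union of the images $g(X_\mu)$, and then use that $Y$ is $\lambda$-generated to conclude that some single $g(X_\mu)$ already equals $Y$. The paper's version is terser (it simply cites that locally $\lambda$-presentable implies locally $\lambda$-generated and then asserts $Y=g(X_i)$ for some $i$), while you unpack both the $\lambda$-directedness of the family and the factoring of $1_Y$ more explicitly, but the argument is the same.
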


\begin{proof}
Any locally $\lambda$-presentable Grothendieck category is also locally $\lambda$-generated. This means that, up to isomorphism, there is a set of $\lambda$-generated objects and that every object is a $\lambda$-directed union of its $\lambda$-generated subobjects. (The proof of this goes by writing the given object $X = \varinjlim X_i$ as a $\lambda$-directed colimit of $\lambda$-presented $X_i$. Then factor each $X_i \xrightarrow{} X$ as an epi followed by a mono. Each $\im{X_i}$ is $\lambda$-generated and $X_i$ is the $\lambda$-directed union of the $\im{X_i}$.) So we may write $X =
\sum_{i \in I}X_i$ as a $\lambda$-directed union of
$\lambda$-generated subobjects of $X$. Since $g$ is an epimorphism,
$Y = \sum_{i \in I} g(X_i)$, and this too is a $\lambda$-directed
union. Now we must have $Y = g(X_i)$ for some $i \in I$ since $Y$ is $\lambda$-generated. So $g_{|X_i}
\mathcolon X_i \xrightarrow{} Y$ is an epimorphism.
\end{proof}

Recall (see Lemma~\ref{lemma-properties G-exact}), that a chain complex $X$ is $G$-acyclic if and only if it is exact and each $Z_nX$ is a $G$-subobject of $X_n$. This means the inclusion $Z_nX \rightarrowtail X_n$ is a $G$-monomorphism, and we write $Z_nX \subseteq_G X_n$.

\begin{lemma}\label{lemma-finding G-exact subcomplexes}
Let $\gamma$ be as in Setup~\ref{setup-gamma}. Given any nonzero $G$-acyclic complex $E$ there exists a degreewise $G$-exact sequence $P \rightarrowtail E \twoheadrightarrow E/P$ where $P$ is a nonzero $G$-acyclic subcomplex with each $P_n$ $\gamma$-presented.

\end{lemma}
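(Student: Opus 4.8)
The plan is to construct $P$ as the union $P=\bigcup_{k<\omega}P^k$ of an increasing chain of subcomplexes $P^0\subseteq P^1\subseteq\cdots$ of $E$, each $\gamma$-presented in every degree, chosen so that $P$ is exact, each $P_n\subseteq_G E_n$, and each $Z_nP\subseteq_G P_n$. The last two conditions say precisely that $P\rightarrowtail E\twoheadrightarrow E/P$ is degreewise $G$-exact and (together with exactness of $P$, via Lemma~\ref{lemma-properties G-exact}) that $P$ is $G$-acyclic. For $P^0$ we take the subcomplex of $E$ generated by a nonzero $\gamma$-generated subobject of some $E_m$; this exists because $E\neq 0$ and $\cat{G}$ is locally $\gamma$-generated, it is $\gamma$-presented in each degree since $\gamma$-presented $=$ $\gamma$-generated and this class is closed under quotients and subobjects (Setup~\ref{setup-gamma}(i)), and it guarantees $P\neq 0$.

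The chain is built by interleaving three kinds of enlargement, each of which changes $P^k$ in only finitely many degrees and only by a $\gamma$-presented amount. (i) \emph{Exactness in degree $n$}: the object $Z_nP^k\subseteq P^k_n$ is $\gamma$-generated, and since $d_{n+1}\colon E_{n+1}\twoheadrightarrow Z_nE$ is epi we may apply Lemma~\ref{lemma-surjective} (with $\lambda=\gamma$; note $\cat{G}$ is locally $\gamma$-presentable, being locally finitely presentable with $\omega\le\gamma$) to $d_{n+1}^{-1}(Z_nP^k)\twoheadrightarrow Z_nP^k$ to find a $\gamma$-generated $X'\subseteq E_{n+1}$ with $d_{n+1}(X')=Z_nP^k$; adjoin $X'$ in degree $n+1$. (ii) \emph{Purity of $P_n$ in $E_n$}: by Setup~\ref{setup-gamma}(ii) enlarge $P^k_n$ to a $\gamma$-generated pure subobject of $E_n$, and adjoin its image under $d_n$ in degree $n-1$. (iii) \emph{Purity of $Z_nP$ in $Z_nE$}: by Setup~\ref{setup-gamma}(ii) choose a $\gamma$-generated pure subobject $Q\subseteq Z_nE$ with $Z_nP^k\subseteq Q$ and replace $P^k_n$ by $P^k_n+Q$. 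Since $Q\subseteq Z_nE$, the modular law gives $(P^k_n+Q)\cap Z_nE=(P^k_n\cap Z_nE)+Q=Q$, so the cycle object of the new complex in degree $n$ is exactly $Q$, hence pure in $Z_nE$; moreover $d_n(P^k_n+Q)=d_n(P^k_n)$, so this move does not disturb the subcomplex condition. One checks directly that each move preserves the property of being a subcomplex. Enumerating the pairs (degree, move-type) so that each occurs at cofinally many stages, $P$ is $\gamma$-presented in each degree (a countable union of $\gamma$-presented objects is $\gamma$-generated since $\gamma$ is uncountable, hence $\gamma$-presented by Setup~\ref{setup-gamma}(i)).

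Finally we read off the properties. Since the moves closing $P$ under the differential occur cofinally and $Z_nP=\bigcup_k Z_nP^k$, every cycle of $P$ lies in some $Z_nP^k$ and becomes a boundary at the next occurrence of move (i) in degree $n$; hence $P$ is exact. Move (ii) makes $P^k_n$ pure in $E_n$ at cofinally many stages, so $P_n$ is a directed union of pure subobjects of $E_n$ and is therefore pure in $E_n$ (purity is closed under directed unions, Proposition~\ref{prop-purity properties}); in particular $P_n\subseteq_G E_n$, so $P\rightarrowtail E\twoheadrightarrow E/P$ is degreewise $G$-exact. Likewise move (iii) makes $Z_nP^k$ pure in $Z_nE$ at cofinally many stages, so $Z_nP$ is pure in $Z_nE$, whence $Z_nP\subseteq_G Z_nE$; combining with $Z_nE\subseteq_G E_n$ (as $E$ is $G$-acyclic) through Proposition~\ref{prop-G-purity properties}(1) gives $Z_nP\subseteq_G E_n$, and then Proposition~\ref{prop-G-purity properties}(2) applied to $Z_nP\subseteq P_n\subseteq E_n$ gives $Z_nP\subseteq_G P_n$. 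By Lemma~\ref{lemma-properties G-exact} this says $P$ is $G$-acyclic, completing the construction.

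The delicate point is move (iii): one must enlarge $P^k_n$ by exactly the right amount, and the modular-law identity $(P^k_n+Q)\cap Z_nE=Q$ is what makes the new cycle object pure in $Z_nE$ while leaving the differentials into $P^k_{n-1}$ unchanged. Beyond that, the main effort is the bookkeeping: repairs of one condition will in general spoil the others, so one must schedule each repair cofinally often and then invoke closure of purity under directed unions in the limit.
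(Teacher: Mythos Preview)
Your proof is correct and follows essentially the same strategy as the paper: build $P$ as a countable increasing union of $\gamma$-presented subcomplexes of $E$, scheduling local repairs (exactness, purity of $P_n$ in $E_n$, purity of $Z_nP$ in $Z_nE$) cofinally often in each degree so that the three properties hold in the limit via closure of purity under directed unions. The only difference is organizational: the paper bundles all three repairs at a fixed degree $n$ into a single ``degree $n$ operation'' that also maintains exactness at every intermediate stage (via repeated upward extension using Lemma~\ref{lemma-surjective}), whereas you decouple the three move types and recover exactness only in the limit---but the underlying mechanism and the cofinal-union argument are identical.
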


\begin{proof}
(Step 1) We first prove the following: For any given $n$ and exact $S \subseteq E$ with each $S_i$ $\gamma$-presented, there exists an exact $T \subseteq E$ satisfying the following:
\begin{enumerate}
\item $S \subseteq T$ and each $T_i$ is $\gamma$-presented.
\item $Z_nT \subseteq_G T_n$ is a $G$-subobject.
\item $S_n \subseteq P \subseteq T_n \subseteq E_n$ for some $G$-subobject $P \subseteq E_n$.
\end{enumerate}
Indeed as in Setup~\ref{setup-gamma} we can find a $\gamma$-presented pure $P \subseteq E_n$ containing $S_n$. Then set $T_{n-1} = S_{n-1} + d(P)$ and note that it is $\gamma$-presented and that $\ker{d|_{T_{n-1}}} = d(P)$. We set $T_{n-2} = S_{n-2}$, $T_{n-3} = S_{n-3}$, etc. going downward. This gives us a portion of a subcomplex we are building $$ \cdots P \xrightarrow{}  T_{n-1} \xrightarrow{} T_{n-2} \xrightarrow{} T_{n-3} \xrightarrow{} \cdots $$ which we note is exact in degrees $n-1$ and below. We wish to extend upwards to an exact complex.

Note that $\ker{d|_{P}}$ is also $\gamma$-presented. So there exists a $\gamma$-presented pure subobject $P' \subseteq Z_nE$ containing $\ker{d|_{P}}$. Now let $T_n = P + P'$, and note that we still have exactness in degrees $\leq n-1$ in the (still unfinished) subcomplex shown
$$ \cdots T_n \xrightarrow{}  T_{n-1} \xrightarrow{} T_{n-2} \xrightarrow{} T_{n-3} \xrightarrow{} \cdots $$ Moreover, since $\ker{d|_{T_n}} = P'$ is pure in $Z_nE$, it is a $G$-subobject $\ker{d|_{T_n}} = P' \subseteq_G Z_nE$. We also have $Z_nE \subseteq_G E_n$ by assumption, and so from part~(1) of Proposition~\ref{prop-G-purity properties} we have $\ker{d|_{T_n}} = P' \subseteq_G E_n$. But then from part~(2) of Proposition~\ref{prop-G-purity properties} we have $\ker{d|_{T_n}} = P' \subseteq_G T_n$. (Here we have arranged conditions (2) and (3) in the subcomplex $T$ that we are constructing.)

Now since $P'$ is $\gamma$-presented, we can use Lemma~\ref{lemma-surjective} to find a $\gamma$-presented
subobject $S'_{n+1} \subseteq E_{n+1}$ for which $d_{|S'_{n+1}}
\mathcolon S'_{n+1} \xrightarrow{} P'$ is an epimorphism. We set $T_{n+1} = S_{n+1} + S'_{n+1}$ and note that
$$ \cdots T_{n+1} \xrightarrow{} T_n \xrightarrow{}  T_{n-1} \xrightarrow{} T_{n-2} \xrightarrow{} T_{n-3} \xrightarrow{} \cdots $$
is now exact in degrees $n$ and below. Repeatedly using Lemma~\ref{lemma-surjective} in this way we can continue upward to obtain an exact subcomplex $T \subseteq E$ which contains $S$, which has each $T_i$ $\gamma$-presented, has $Z_nT = P' \subseteq_G T_n$, and has $S_n \subseteq P \subseteq T_n \subseteq E_n$ where $P \subseteq_G E_n$.

(Step 2) We now complete the proof. For the construction just described in (Step 1), lets say that the complex $T$ was obtained by applying a ``degree $n$ operation to $S$''. Start by first finding \emph{any} nonzero exact complex $S \subseteq E$ with each $S_i$ $\gamma$-presented, and with this $S$ apply a ``degree 0 operation to $S$'' to obtain a $T^0$ with $S \subseteq T^0 \subseteq E$ and the guaranteed properties in (Step 1). Then apply a ``degree -1 operation to $T^0$'' to obtain a complex $T^1$ with $T^0 \subseteq T^1 \subseteq E$. Then again apply a ``degree 0 operation to $T^1$ to obtain a $T^2$. We continue to use ``degree $k$ operations'' on the previously constructed complex in the following back and forth pattern on $k$:
$$0, \ \ \ -1,0,1, \ \ \ -2,-1,0,1,2, \ \ \ -3,-2,-1,0,1,2,3, \ \ \ \cdots$$
to build an increasing union of exact subcomplexes, $\{T^l\}$. Finally, set $P = \cup_{l \in \N} T^l$. We now verify that $P$ has the desired properties:
\begin{enumerate}
\item $S \subseteq P \subseteq E$ and each $P_n$ is $\gamma$-presented.

\noindent (Reason) The containments are clear and each $P_n = \cup_{l \in \N} (T^l)_n$ is $\gamma$-presented because all the $(T^l)_n$ are $\gamma$-presented and $|\N| < \gamma$. See~\cite[Proposition~1.16]{adamek-rosicky}.
\item $P$ is $G$-acyclic.

\noindent (Reason) $P$ is exact since it is a direct union of exact subcomplexes. Moreover each $Z_nP = \cup_{l \in \N} Z_n(T^l)$ must be a $G$-subobject of $P_n$ by Proposition~\ref{prop-direct limits} as the union contains a cofinal collection of $G$-monomorphisms.

\item $P_n \subseteq_G E_n$ for each $n$.

\noindent (Reason) Each $P_n = \cup_{l \in \N} (T^l)_n$ must be a $G$-subobject of $E_n$ because again, this union contains a cofinal collection of $G$-subobjects of $E_n$ by property (3) in (Step 1).
\end{enumerate}
\end{proof}

\begin{proposition}\label{prop-transfinite extensions from purity}
Let $\gamma$ be as in Setup~\ref{setup-gamma}.
Each $G$-acyclic complex is a transfinite $G$-extension of $\gamma$-presented $G$-acyclic complexes.
\end{proposition}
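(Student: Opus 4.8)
The plan is to build the required transfinite filtration of a given $G$-acyclic complex $E$ by transfinite induction, using Lemma~\ref{lemma-finding G-exact subcomplexes} repeatedly as the successor step and unions as the limit step. The key engine is already in place: Lemma~\ref{lemma-finding G-exact subcomplexes} says that from any nonzero $G$-acyclic complex we can split off, as a degreewise $G$-exact subcomplex, a nonzero $G$-acyclic subcomplex with each term $\gamma$-presented. What remains is to iterate this in a way that (a) exhausts $E$, (b) keeps each stage a $G$-subcomplex of $E$, and (c) keeps each successor quotient $G$-acyclic and $\gamma$-presented so that the whole thing is genuinely a transfinite $G$-extension of $\gamma$-presented $G$-acyclic complexes.

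First I would set $P_0 = 0$ and, having constructed a $G$-acyclic $P_\alpha$ with $P_\alpha \subseteq_G E$ degreewise and $E/P_\alpha$ $G$-acyclic (this last using that the $G$-acyclic complexes are thick, Lemma~\ref{lemma-properties G-exact}(3)), proceed as follows. If $P_\alpha = E$ we stop. Otherwise $E/P_\alpha$ is a nonzero $G$-acyclic complex, so Lemma~\ref{lemma-finding G-exact subcomplexes} gives a degreewise $G$-exact sequence $Q \rightarrowtail E/P_\alpha \twoheadrightarrow (E/P_\alpha)/Q$ with $Q$ nonzero, $G$-acyclic, and each $Q_n$ $\gamma$-presented. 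Let $P_{\alpha+1} \subseteq E$ be the preimage of $Q$ under $E \twoheadrightarrow E/P_\alpha$. Then $P_\alpha \subseteq P_{\alpha+1}$ with $P_{\alpha+1}/P_\alpha \cong Q$, so the quotient is $\gamma$-presented and $G$-acyclic; and $P_\alpha \subseteq_G P_{\alpha+1}$ since $P_{\alpha+1}/P_\alpha \cong Q$ is $G$-acyclic, hence $0 \to P_\alpha \to P_{\alpha+1} \to Q \to 0$ is degreewise $G$-exact. One checks $P_{\alpha+1} \subseteq_G E$ degreewise using part~(3) of Proposition~\ref{prop-G-purity properties}: we have $P_\alpha \subseteq_G E$ and $P_{\alpha+1}/P_\alpha \subseteq_G E/P_\alpha$ in each degree, hence $P_{\alpha+1} \subseteq_G E$ in each degree; and then $E/P_{\alpha+1}$ is $G$-acyclic by thickness again, since $E/P_{\alpha+1} \cong (E/P_\alpha)/Q$. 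At a limit ordinal $\lambda$ I set $P_\lambda = \bigcup_{\alpha < \lambda} P_\alpha$; the functor $P : \lambda \to \cha{G}$ is then colimit-preserving by construction. That $P_\lambda \subseteq_G E$ degreewise and that $P_\lambda$ is $G$-acyclic follow from Proposition~\ref{prop-direct limits} (direct limits of $G$-monomorphisms are $G$-monomorphisms, and $G$-acyclic complexes are closed under direct limits), and $E/P_\lambda \cong \varinjlim_{\alpha<\lambda} E/P_\alpha$ is $G$-acyclic likewise. Since each successor step strictly increases $P_\alpha$ (as $Q \neq 0$), a cardinality bound forces $P_\lambda = E$ for some ordinal $\lambda$, and then $E = P_\lambda$ displays $E$ as the transfinite $G$-extension $P_0 \rightarrowtail P_1 \rightarrowtail P_2 \rightarrowtail \cdots$ of the $\gamma$-presented $G$-acyclic complexes $P_{\alpha+1}/P_\alpha$, exactly as required.

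The main obstacle is bookkeeping rather than any deep point: one must be careful that the inductive hypothesis carried from stage to stage is strong enough — namely, that $P_\alpha$ is $G$-acyclic, sits inside $E$ as a degreewise $G$-subcomplex, \emph{and} has $G$-acyclic quotient $E/P_\alpha$ — so that Lemma~\ref{lemma-finding G-exact subcomplexes} can be reapplied to $E/P_\alpha$ and so that the limit step goes through. The two properties of Proposition~\ref{prop-G-purity properties} and the thickness of the $G$-acyclic complexes are precisely what keep this hypothesis self-propagating through both successor and limit stages, and Proposition~\ref{prop-direct limits} handles the limit stage. The termination is immediate from strict growth together with the fact that $E$ has only a set of subcomplexes.
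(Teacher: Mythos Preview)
Your argument is essentially identical to the paper's: both iterate Lemma~\ref{lemma-finding G-exact subcomplexes} by transfinite induction, use Proposition~\ref{prop-G-purity properties} to propagate the degreewise $G$-subobject condition through successor steps, and invoke Proposition~\ref{prop-direct limits} at limit stages. One small slip: your justification that $P_\alpha \subseteq_G P_{\alpha+1}$ degreewise --- namely ``since $P_{\alpha+1}/P_\alpha \cong Q$ is $G$-acyclic, hence $0 \to P_\alpha \to P_{\alpha+1} \to Q \to 0$ is degreewise $G$-exact'' --- is not a valid inference (a complex being $G$-acyclic says nothing about the degreewise $G$-exactness of a short exact sequence it sits in); the correct reason, which the paper uses, is Proposition~\ref{prop-G-purity properties}(2) applied to $P_\alpha \subseteq P_{\alpha+1} \subseteq E$ together with the inductive hypothesis $P_\alpha \subseteq_G E$.
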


\begin{proof}
Suppose $E \neq 0$ is $G$-acyclic and use Lemma~\ref{lemma-finding G-exact subcomplexes} to find a nonzero $\gamma$-presented $G$-acyclic subcomplex $0 \neq P_0 \subseteq E$ which is a $G$-subobject in each degree. Then applying $\Hom_{\cat{G}}(G,-)$ to $P_0 \rightarrowtail E \twoheadrightarrow E/P_0$ leaves an exact sequence of complexes and it follows that $E/P_0$ is $G$-acyclic also. Assuming this complex is not zero
find another nonzero $\gamma$-presented $G$-acyclic complex $P_1/P_0 \subseteq E/P_0$ which is a $G$-subobject in each degree. Since $P_0 \subseteq E$ is a $G$-subobject in each degree, we get that $P_0 \subseteq P_1$ is also a $G$-subobject in each degree by Proposition~\ref{prop-G-purity properties}, part~(2). Then part~(3) of that same Proposition tells us that $P_1 \subseteq E$ is a $G$-subobject in each degree. Assuming $P_1 \neq E$, we continue to find an increasing sequence $0 \neq P_0 \subsetneq P_1 \subsetneq P_2 \subsetneq \cdots $ of $G$-acyclic subcomplexes of $E$ with each $P_n \subseteq E$ a $G$-subobject in each degree. Then set $P_{\omega} = \cup_{n<\omega} P_n$ and we see that $P_{\omega}$ is too a $G$-acyclic complex by Proposition~\ref{prop-direct limits} and also $P_{\omega} \subseteq E$ is a $G$-subobject in each degree, again by Proposition~\ref{prop-direct limits}. Therefore $E/P_{\omega}$ is also $G$-acyclic and we can continue with transfinite induction to end up with $E$ displayed as a transfinite $G$-extension of $\gamma$-presented $G$-acyclic complexes.
\end{proof}

\begin{theorem}\label{them-injective cotorsion pair for G-derived}
Let $\cat{G}$ be a Grothendieck category with a generator $G = \oplus_{i \in I} G_i$ where each $G_i$ is finitely presented.  Let $\class{W}$ be the class of all $G$-acyclic complexes. Then there is an injective cotorsion pair $(\class{W},\class{I})$ in $\cha{G}_G$. That is, it is a complete cotorsion pair in $\cha{G}_G$ for which $\class{W}$ is thick in $\cha{G}_G$ and $\class{W} \cap \class{I}$ coincides with the class of injective complexes in $\cha{G}_G$. We call the complexes in $\class{I}$ the \textbf{semi-$\boldsymbol{G}$-injective} complexes.
\end{theorem}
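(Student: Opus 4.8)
The plan is to realize $(\class{W},\class{I})$ as the cotorsion pair cogenerated by a set of ``small'' $G$-acyclic complexes, using the purity machinery already in place. First I would fix the regular cardinal $\gamma$ of Setup~\ref{setup-gamma} and let $\class{S}$ be a set of isomorphism representatives for the $\gamma$-presented $G$-acyclic complexes in $\cha{G}$; there is only a set of these because, up to isomorphism, there is only a set of $\gamma$-presented objects of $\cat{G}$ and hence only a set of $\gamma$-presented complexes. By Remark~\ref{remark on cotorsion pairs of complexes}, Proposition~\ref{prop-cotorsion pairs in the G-exact cat} applies verbatim to $\cha{G}_G$ since each $D^n(G_i)$ is finitely presented, so $\class{S}$ cogenerates a functorially complete cotorsion pair $(\leftperp{(\rightperp{\class{S}})},\rightperp{\class{S}})$ whose left class consists precisely of retracts of transfinite $G$-extensions of $\class{S}$.

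The next step is to identify this left class with $\class{W}$. For the inclusion $\leftperp{(\rightperp{\class{S}})}\subseteq\class{W}$, observe that a transfinite $G$-extension of $G$-acyclic complexes is $G$-acyclic by Corollary~\ref{cor-transfinite compositions and extensions of G-monos}, and a retract of a $G$-acyclic complex is $G$-acyclic by Lemma~\ref{lemma-properties G-exact}. For the reverse inclusion $\class{W}\subseteq\leftperp{(\rightperp{\class{S}})}$, this is exactly the content of Proposition~\ref{prop-transfinite extensions from purity}: every $G$-acyclic complex is a transfinite $G$-extension of objects of $\class{S}$. Hence $(\class{W},\class{I})$, with $\class{I}=\rightperp{\class{W}}=\rightperp{\class{S}}$, is a functorially complete cotorsion pair in $\cha{G}_G$, and $\class{W}$ is thick in $\cha{G}_G$ by Lemma~\ref{lemma-properties G-exact}.

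It remains to check that $\class{W}\cap\class{I}$ is exactly the class of injective objects of $\cha{G}_G$. One containment is immediate: an injective object $I$ satisfies $\GExt^1_{\cha{G}}(-,I)=0$, so $I\in\class{I}$, and $I$ is contractible with $G$-injective components by Lemma~\ref{lemma-complexes that are injective}, hence $G$-acyclic, so $I\in\class{W}$. Conversely, given $I\in\class{W}\cap\class{I}$, use that $\cha{G}_G$ has enough $G$-injectives (Lemma~\ref{lemma-complexes that are injective}) to produce a $G$-exact sequence $I\rightarrowtail E\twoheadrightarrow E/I$ with $E$ a $G$-injective complex. Since $E$ and $I$ are $G$-acyclic, so is $E/I$, that is, $E/I\in\class{W}$; therefore this extension represents an element of $\GExt^1_{\cha{G}}(E/I,I)=0$ and splits, so $I$ is a retract of the injective complex $E$ and thus injective in $\cha{G}_G$. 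This is the dual of the argument of~\cite[Proposition~3.4]{bravo-gillespie-hovey} used in the projective case.

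Since each genuinely new ingredient --- the reduction to $\gamma$-presented pieces and the stability of $G$-acyclicity under transfinite $G$-extensions --- is already supplied by Propositions~\ref{prop-transfinite extensions from purity} and~\ref{prop-cotorsion pairs in the G-exact cat} together with Corollary~\ref{cor-transfinite compositions and extensions of G-monos}, I do not expect a substantial obstacle; the proof is essentially an assembly dual to that of Theorem~\ref{them-projective model for G-derived}. The point needing the most care is merely confirming that the hypotheses of Proposition~\ref{prop-cotorsion pairs in the G-exact cat} pass from $\cat{G}_G$ to $\cha{G}_G$, which is precisely what Remark~\ref{remark on cotorsion pairs of complexes} records.
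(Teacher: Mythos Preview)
Your proposal is correct and follows essentially the same route as the paper: you cogenerate by a set $\class{S}$ of $\gamma$-presented $G$-acyclic complexes, invoke Proposition~\ref{prop-cotorsion pairs in the G-exact cat} via Remark~\ref{remark on cotorsion pairs of complexes} to get a complete cotorsion pair whose left class is retracts of transfinite $G$-extensions of $\class{S}$, identify this left class with $\class{W}$ using Corollary~\ref{cor-transfinite compositions and extensions of G-monos} and Proposition~\ref{prop-transfinite extensions from purity}, and finish with the standard retract-of-an-injective argument. The only cosmetic difference is that the paper cites~\cite[Proposition~3.3]{bravo-gillespie-hovey} for the last step while you spell it out and reference the projective dual~\cite[Proposition~3.4]{bravo-gillespie-hovey}.
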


\begin{proof}
Let $\gamma$ be as in Setup~\ref{setup-gamma} and take $\class{S}$ to be a set of isomorphism representatives for the class of all $\gamma$-presented complexes in $\class{W}$. So everything in $\class{S}$ is a $G$-acyclic complex $S$ with each $S_n$ being $\gamma$-presented.
We will show that $\class{S}$ cogenerates $(\class{W},\class{I})$ in $\cha{G}_G$.
Recall that cotorsion pairs in $\cha{G}_G$ are with respect to $\GExt^1_{\cha{G}}$.
By Remark~\ref{remark on cotorsion pairs of complexes} which follows Proposition~\ref{prop-cotorsion pairs in the G-exact cat}, we know that $\class{S}$ cogenerates a complete cotorsion pair $(\leftperp{(\rightperp{\class{S}})}, \rightperp{\class{S}})$ in $\cha{G}_G$ where $\leftperp{(\rightperp{\class{S}})}$ consists precisely of retracts of transfinite $G$-extensions of $\class{S}$. We wish to show $\class{W} = \leftperp{(\rightperp{\class{S}})}$. But we already know that $\class{W}$ is thick in $\cha{G}_G$ by Lemma~\ref{lemma-properties G-exact} and closed under transfinite $G$-extensions by Corollary~\ref{cor-transfinite compositions and extensions of G-monos}. So $\class{W} \supseteq \leftperp{(\rightperp{\class{S}})}$. On the other hand, $\class{W} \subseteq \leftperp{(\rightperp{\class{S}})}$ was proved in Proposition~\ref{prop-transfinite extensions from purity}. So $(\class{W},\class{I})$ is a complete cotorsion pair where $\class{I} = \rightperp{\class{S}}$.

Since we already know $\class{W}$ is thick, all that is left is to show that $\class{W} \cap \class{I}$ coincides with the class of injective complexes in $\cha{G}_G$. But by the argument in~\cite[Proposition~3.3]{bravo-gillespie-hovey} it is enough to show that the injectives in $\cha{G}_G$ are contained in $\class{W}$. Since the injective complexes are precisely the contractible complexes with $G$-injective components by Lemma~\ref{lemma-complexes that are injective}, these are in $\class{W}$ by lemma~\ref{lemma-properties G-exact}.
\end{proof}

The following corollary now follows from the main result in~\cite{hovey}.

\begin{corollary}\label{cor-injective model for G-derived}
Let $\cat{G}$ be a Grothendieck category with a generator $G = \oplus_{i \in I} G_i$ where each $G_i$ is finitely presented. Then there is a model structure on $\cha{G}$ which we call the \textbf{$\boldsymbol{G}$-injective model structure} whose trivial objects are the $G$-acyclic complexes. The model structure satisfies the following:
\begin{enumerate}
\item The cofibrations are precisely the $G$-monomorphisms. That is, the chain maps which are $G$-monomorphisms in each degree.
\item The trivial cofibrations are the $G$-monomorphisms with $G$-acyclic cokernel.
\item The fibrations are the degreewise split epimorphisms whose kernel is a semi-$G$-injective complex.
\item The trivial fibrations are the split epimorphisms whose kernel is a $G$-injective complex.
\item The weak equivalences are the $G$-homology isomorphisms.
\item The model structure is cofibrantly generated. Sets of generating cofibrations and generating trivial cofibrations can be found using Proposition~\ref{prop-cotorsion pairs in the G-exact cat}.
\item The homotopy category is equivalent to $\class{D}(G)$, and this is a compactly generated triangulated category by Corollary~\ref{cor-projective model for G-derived}.
\end{enumerate}
\end{corollary}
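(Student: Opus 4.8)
The plan is to apply Hovey's correspondence between complete cotorsion pairs and abelian model structures, exactly as in the proof of Corollary~\ref{cor-projective model for G-derived}, but now to the injective cotorsion pair $(\class{W},\class{I})$ of Theorem~\ref{them-injective cotorsion pair for G-derived}. By Proposition~\ref{prop-G-exact categories have enough injectives} together with Lemma~\ref{lemma-generators in Ch(G)}, the exact category $\cha{G}_G$ has enough $G$-injective objects, so $(\class{A},\class{W}\cap\class{I})$ is a complete cotorsion pair, where $\class{A}$ denotes the class of all complexes and $\class{W}\cap\class{I}$ the class of injective complexes of $\cha{G}_G$ identified in Lemma~\ref{lemma-complexes that are injective}. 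Together with $(\class{W},\class{I})$ this is precisely the pair of cotorsion pairs required by the main theorem of~\cite{hovey}: since $\class{W}$ is thick and $\class{W}\cap\class{I}$ is the class of injectives, we obtain an abelian model structure on $\cha{G}$ with cofibrant--trivial--fibrant triple $(\class{A},\class{W},\class{I})$. In the terminology of~\cite{gillespie-exact model structures} and~\cite{gillespie-recoll} this is just the statement that an injective cotorsion pair determines an injective model structure.

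Next I would read the classes of maps off of Hovey's dictionary. Every object being cofibrant, the cofibrations are the admissible monomorphisms of $\cha{G}_G$, i.e.\ the degreewise $G$-monomorphisms, which gives~(1); the trivial cofibrations are those whose cokernel lies in $\class{W}$, giving~(2); and the fibrations (resp.\ trivial fibrations) are the admissible epimorphisms of $\cha{G}_G$ whose kernel lies in $\class{I}$ (resp.\ in $\class{W}\cap\class{I}$). To upgrade these last two to the ``degreewise split'' descriptions in~(3) and~(4), I would observe, dually to the projective situation, that every component of a complex in $\class{I}$, and of a $G$-injective complex, is a $G$-injective object of $\cat{G}$ (this follows just as in the projective case, where semi-$G$-projective complexes have $G$-projective components); then a degreewise $G$-exact sequence whose kernel has $G$-injective components splits in each degree, because $G$-injective objects are injective relative to $G$-monomorphisms. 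This yields~(3) and~(4).

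For~(5), identifying the weak equivalences with the $G$-homology isomorphisms is the dual of the computation in the proof of Corollary~\ref{cor-projective model for G-derived}: a weak equivalence $f$ factors as a cofibration $i$ followed by a trivial fibration $p$; here $p$ is a split epimorphism with $G$-acyclic kernel, so $H_n[\Hom_{\cat{G}}(G,p)]$ is an isomorphism for all $n$, while $i$ is a $G$-monomorphism, so by the long exact $G$-homology sequence $H_n[\Hom_{\cat{G}}(G,i)]$ is an isomorphism for all $n$ if and only if $\cok i$ is $G$-acyclic, i.e.\ $i$ is a trivial cofibration; combining these, $f$ is a weak equivalence if and only if $H_n[\Hom_{\cat{G}}(G,f)]$ is an isomorphism for all $n$. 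Part~(6) is immediate from Proposition~\ref{prop-cotorsion pairs in the G-exact cat} and Remark~\ref{remark on cotorsion pairs of complexes}, which exhibit both relevant cotorsion pairs in $\cha{G}_G$ as cogenerated by sets, hence small, so~\cite{hovey} gives a cofibrantly generated model structure with explicit generating (trivial) cofibrations. Finally, for~(7), this model structure has the same class of weak equivalences --- the $G$-homology isomorphisms --- as the $G$-projective model structure of Corollary~\ref{cor-projective model for G-derived}, so its homotopy category is again $\class{D}(G)$, which is compactly generated by Corollary~\ref{cor-projective model for G-derived}(7).

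I expect the only genuine friction to be the verification in~(3)--(4) that the fibrations and trivial fibrations are \emph{degreewise split} epimorphisms, which rests on the (dual of the) fact that semi-$G$-injective complexes have $G$-injective components; everything else is a formal transcription, via~\cite{hovey}, of the projective case.
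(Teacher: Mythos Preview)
Your proposal is correct and follows the same route as the paper, which simply records that the corollary ``follows from the main result in~\cite{hovey}'' applied to the injective cotorsion pair of Theorem~\ref{them-injective cotorsion pair for G-derived}; you have merely spelled out the details the paper leaves implicit.

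One small correction to your justification of~(3)--(4): the fact that semi-$G$-injective complexes have $G$-injective components does \emph{not} follow ``just as in the projective case''. In the projective case it comes from the explicit description of $\class{P}$ as retracts of transfinite extensions of spheres on the $G_i$ (Theorem~\ref{them-projective model for G-derived}), whereas no such constructive description is given for $\class{I}$. Instead, argue directly: for any $A \in \cat{G}$ the disk $D^n(A)$ is contractible, hence $G$-acyclic by Lemma~\ref{lemma-properties G-exact}(2), so for $X \in \class{I}$ we have $0 = \GExt^1_{\cha{G}}(D^n(A),X) \cong \GExt^1_{\cat{G}}(A,X_n)$ by Lemma~\ref{lemma-disk adjunctions}(1), whence each $X_n$ is $G$-injective. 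With this in hand, your argument that the admissible epimorphisms with kernel in $\class{I}$ (resp.\ in $\class{W}\cap\class{I}$) are degreewise split (resp.\ split as complexes) goes through exactly as you describe.
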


\section{The recollement situations}\label{sec-recollement of Krause for G-derived}

Again, $\cat{G}$ is a locally finitely presentable Grothendieck category with generator $G = \oplus_{i \in I} G_i$ where each $G_i$ is finitely presented. Here we wish to prove the two recollement situations from Theorems~\ref{them-A} and~\ref{them-B} of the Introduction.

We will use the correspondence between injective (resp. projective) cotorsion pairs and recollements situations from~\cite{gillespie-recoll} and~\cite{gillespie-recoll2}. By definition, a cotorsion pair $(\class{P},\class{W})$ in $\cat{G}_G$ (or $\cha{G}_G$) is a \textbf{projective cotorsion pair} if it is complete, $\class{W}$ is $G$-thick, and if $\class{P} \cap \class{W}$ coincides with the class of $G$-projective objects. Since the category $\cat{G}_G$ has enough projectives this makes the triple $(\class{P},\class{W},\class{A})$, where $\class{A}$ represents the class of all objects, correspond to a model structure on $\cat{G}$ via Hovey's correspondence~\cite[Theorem~2.2]{hovey}. For example, the cotorsion pair of Theorem~\ref{them-projective model for G-derived} is a projective cotorsion pair in $\cha{G}_G$ and corresponds to the model structure of Corollary~\ref{cor-projective model for G-derived}. On the other hand, we showed in Proposition~\ref{prop-G-exact categories have enough injectives} that $\cat{G}_G$ (and so $\cha{G}_G$) also has enough injectives and so it also makes sense to speak of \textbf{injective cotorsion pairs} which are the dual. For example, the cotorsion pair of Theorem~\ref{them-injective cotorsion pair for G-derived} is an injective cotorsion pair in $\cha{G}_G$ and gave us the model structure of Corollary~\ref{cor-injective model for G-derived}.

\begin{proposition}\label{prop-G-INJ model struc}
Assume each $G_i$ is finitely presented. There is an injective model structure $(\class{W}_1,\class{F}_1)$ in $\cha{G}_G$ where $\class{F}_1$ is the class of all complexes of $G$-injective complexes.
\end{proposition}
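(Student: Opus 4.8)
The plan is to transfer the injective cotorsion pair $(\class{A},\class{I})$ of Proposition~\ref{prop-G-exact categories have enough injectives} from $\cat{G}_G$ up to the chain level. Recall that this pair is cogenerated by the set $\class{S}$ of (representatives of) $\gamma$-presented objects, that $\class{A}=\cat{G}$, and that $\rightperp{\class{S}}=\class{I}$ is the class of $G$-injective objects. In $\cha{G}_G$ I would form the set of complexes $\class{S}_1=\{\,D^n(S)\mid S\in\class{S},\ n\in\Z\,\}$ and let $(\class{W}_1,\class{F}_1)$ be the cotorsion pair it cogenerates, so $\class{F}_1=\rightperp{\class{S}_1}$ and $\class{W}_1={}^{\perp}\class{F}_1$, all perps taken with respect to $\GExt^1_{\cha{G}}$. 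The disk isomorphism $\GExt^1_{\cha{G}}(D^n(S),Y)\cong\GExt^1_{\cat{G}}(S,Y_n)$ of Lemma~\ref{lemma-disk adjunctions} shows immediately that $Y\in\class{F}_1$ if and only if each $Y_n$ lies in $\rightperp{\class{S}}=\class{I}$; that is, $\class{F}_1$ is exactly the class of complexes with $G$-injective components, as required. Since $\class{S}_1$ is a set, Remark~\ref{remark on cotorsion pairs of complexes} (Proposition~\ref{prop-cotorsion pairs in the G-exact cat} applied to $\cha{G}_G$, which rests on Corollary~\ref{cor-transfinite compositions and extensions of G-monos} and hence on local finite presentability) guarantees that $(\class{W}_1,\class{F}_1)$ is functorially complete.

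It then remains to check that $\class{W}_1$ is thick in $\cha{G}_G$ and that $\class{W}_1\cap\class{F}_1$ is the class of injective objects of $\cha{G}_G$. The key is a clean description of $\class{W}_1$ in terms of $\homcomplex$. If $Y\in\class{F}_1$, then any degreewise $G$-exact sequence $0\to Y\to Z\to X\to 0$ of complexes is split in each degree, since each $Y_n$ is $G$-injective; thus every element of $\GExt^1_{\cha{G}}(X,Y)$ is represented by a degreewise split extension, and as a degreewise split extension which is trivial in $\cha{G}_G$ is itself split, the natural map $\Ext^1_{dw}(X,Y)\to\GExt^1_{\cha{G}}(X,Y)$ is an isomorphism for all $Y\in\class{F}_1$. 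By Lemma~\ref{lemma-homcomplex-basic-lemma} this yields
\[
\class{W}_1=\{\,X\in\cha{G}\ :\ \homcomplex(X,Y)\text{ is exact for every }Y\in\class{F}_1\,\},
\]
equivalently, $X\in\class{W}_1$ precisely when every chain map from $X$ into a complex of $G$-injectives is null homotopic. Thickness is now formal: by the remark following Lemma~\ref{lemma-homcomplex-basic-lemma}, for $Y\in\class{F}_1$ the contravariant functor $\homcomplex(-,Y)$ carries a degreewise $G$-exact sequence to a short exact sequence of complexes of abelian groups, and in such a sequence acyclicity of any two of the three terms forces acyclicity of the third; closure under retracts is clear because $\homcomplex(-,Y)$ is additive and a retract of an exact complex of abelian groups is exact.

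For the intersection: if $X\in\class{W}_1\cap\class{F}_1$ then in particular $\homcomplex(X,X)$ is exact, so $1_X\sim 0$ and $X$ is contractible; conversely a contractible complex with $G$-injective components lies in $\class{F}_1$, and since $\homcomplex(X,Y)$ is contractible—hence exact—for any $Y$ whenever $X$ is contractible, it also lies in $\class{W}_1$. By Lemma~\ref{lemma-complexes that are injective} the injective objects of $\cha{G}_G$ are exactly the contractible complexes with $G$-injective components, so $\class{W}_1\cap\class{F}_1$ is precisely this class. Thus $(\class{W}_1,\class{F}_1)$ is a complete injective cotorsion pair in $\cha{G}_G$ (which has enough injective objects by Lemma~\ref{lemma-complexes that are injective}), and it gives the asserted injective model structure $(\class{W}_1,\class{F}_1)$ just as in Corollary~\ref{cor-injective model for G-derived}. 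I expect the one genuinely delicate point to be pinning down $\class{W}_1$—that is, the observation that every degreewise $G$-exact extension into $\class{F}_1$ is degreewise split—since this is what makes the $\homcomplex$-description, and therefore the thickness argument, go through; completeness, by contrast, is just the "cogenerated by a set" machinery already in place, and it is there that the hypothesis on the $G_i$ is actually needed.
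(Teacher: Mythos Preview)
Your proof is correct and follows essentially the same route as the paper: the same cogenerating set $\class{S}_1=\{D^n(S)\}$, the same disk isomorphism to identify $\class{F}_1$, the same completeness machinery from Proposition~\ref{prop-cotorsion pairs in the G-exact cat}/Remark~\ref{remark on cotorsion pairs of complexes}, and the same $\homcomplex$-characterization of $\class{W}_1$ (via $\GExt^1_{\cha{G}}(-,F)=\Ext^1_{dw}(-,F)$ for $F\in\class{F}_1$) to get thickness. The only minor difference is that for $\class{W}_1\cap\class{F}_1$ the paper just checks that the $G$-injective complexes lie in $\class{W}_1$ and then invokes the general argument of~\cite[Proposition~3.3]{bravo-gillespie-hovey}, whereas you argue directly via $\homcomplex(X,X)$ exact $\Rightarrow$ $X$ contractible; both are fine.
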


\begin{proof}
From Proposition~\ref{prop-cotorsion pairs in the G-exact cat} and Remark~\ref{remark on cotorsion pairs of complexes} which follows it, we know that \emph{any} set of complexes cogenerates a complete cotorsion pair in $\cha{G}_G$. Here we let $\class{S}_1 = \{D^n(S) \,|\, S \in \class{S}\}$ where $\class{S}$ is the set in Proposition~\ref{prop-G-exact categories have enough injectives} which cogenerates the injective cotorsion pair  $(\class{A},\class{I})$ in $\cat{G}_G$. So $\class{I}$ is the class of $G$-injectives. By Lemma~\ref{lemma-disk adjunctions} we have $\GExt^1_{\cha{G}}(D^n(S),X) \cong \GExt^1_{\cat{G}}(S,X_n)$. It follows that $\rightperp{\class{S}_1} = \class{F}_1$ in $\cha{G}_G$.
So we get a complete cotorsion pair $(\class{W}_1,\class{F}_1)$ in $\cha{G}_G$ where $\class{F}_1$ is the class of all complexes of $G$-injective complexes.

To show it is an injective cotorsion pair in $\cha{G}_G$, we only need to show that $\class{W}_1$ is $G$-thick and contains the injectives. Note that for any complex $W$ and $F \in \class{F}_1$ we have $\GExt^1_{\cha{G}}(W,F) = \Ext^1_{dw}(W,F)$. So by Lemma~\ref{lemma-homcomplex-basic-lemma}, $W \in \class{W}_1$ if and only if $\homcomplex(W,F)$ is exact. So to see that $\class{W}_1$ is $G$-thick we consider a degreewise $G$-exact sequence of complexes $0 \xrightarrow{} X \xrightarrow{} Y \xrightarrow{} Z \xrightarrow{} 0$. Then as noted earlier, for any complex $F$ of $G$-injectives, applying $\homcomplex(-,F)$ will give us a short exact sequence $0 \xrightarrow{} \homcomplex(Z,F) \xrightarrow{} \homcomplex(Y,F) \xrightarrow{} \homcomplex(X,F) \xrightarrow{} 0$. So if two out of the three complexes are exact, then so is the third.  This proves thickness of $\class{W}_1$ in $\cha{G}_G$. If $I$ is an injective complex in $\cha{G}_G$, then by Lemma~\ref{lemma-complexes that are injective} it is a split exact complex with $G$-injective components. In particular, it is contractible. So for such an $I$ we have $\homcomplex(I,F)$ is exact for any $F \in \class{F}_1$.
\end{proof}

\begin{proposition}\label{prop-G-exact G-INJ model struc}
Assume each $G_i$ is finitely presented. There is an injective model structure $(\class{W}_2,\class{F}_2)$ in $\cha{G}_G$ where $\class{F}_2$ is the class of all $G$-acyclic complexes of $G$-injectives.
\end{proposition}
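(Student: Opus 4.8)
The plan is to exhibit $(\class{W}_2,\class{F}_2)$ as the complete cotorsion pair in $\cha{G}_G$ cogenerated by an explicit set of complexes, and then to verify that it is an injective cotorsion pair by the same $\homcomplex(-,F)$ argument used for Proposition~\ref{prop-G-INJ model struc}; Hovey's correspondence then yields the model structure. Concretely, I would fix $\gamma$ as in Setup~\ref{setup-gamma} and let $\class{S}$ be the set of isomorphism representatives of the $\gamma$-presented objects of $\cat{G}$ from Proposition~\ref{prop-G-exact categories have enough injectives}, so that $\rightperp{\class{S}}$ in $\cat{G}_G$ is exactly the class of $G$-injective objects. Then I would set
$$\class{S}_2 = \{\, D^n(S) \mid S \in \class{S},\ n \in \Z \,\} \cup \{\, S^n(G_i) \mid i \in I,\ n \in \Z \,\},$$
and invoke Remark~\ref{remark on cotorsion pairs of complexes} to obtain a complete cotorsion pair $(\class{W}_2,\class{F}_2)$ in $\cha{G}_G$ with $\class{F}_2 = \rightperp{\class{S}_2}$ (perp with respect to $\GExt^1_{\cha{G}}$).

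The first real step is to identify $\class{F}_2$. By Lemma~\ref{lemma-disk adjunctions}(1) we have $\GExt^1_{\cha{G}}(D^n(S),X) \cong \GExt^1_{\cat{G}}(S,X_n)$, so $X \in \rightperp{\{D^n(S)\}}$ precisely when each component $X_n$ lies in $\rightperp{\class{S}}$, that is, is $G$-injective. On the other hand, since each $G_i$ is $G$-projective, $\GExt^1_{\cha{G}}(S^n(G_i),X) = \Ext^1_{dw}(S^n(G_i),X) = H_{n-1}[\Hom_{\cat{G}}(G_i,X)]$ as in the proof of Theorem~\ref{them-projective model for G-derived}, so $X \in \rightperp{\{S^n(G_i)\}}$ precisely when $X$ is $G$-acyclic. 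Hence $\class{F}_2$ is exactly the class of $G$-acyclic complexes of $G$-injectives, as required.

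Next I would establish that $\class{W}_2$ is $G$-thick. Since every $F \in \class{F}_2$ is a complex of $G$-injectives, we have $\GExt^1_{\cha{G}}(W,F) = \Ext^1_{dw}(W,F)$ for every complex $W$, and $\class{F}_2$ is closed under suspensions; so by Lemma~\ref{lemma-homcomplex-basic-lemma}, exactly as for $\class{W}_1$ in Proposition~\ref{prop-G-INJ model struc}, $W \in \class{W}_2$ if and only if $\homcomplex(W,F)$ is exact for every $F \in \class{F}_2$. Closure under retracts is then clear, and for a degreewise $G$-exact sequence $0 \to X \to Y \to Z \to 0$ and any $F \in \class{F}_2$, the $G$-injectivity of the $F_n$ makes $\homcomplex(-,F)$ produce a short exact sequence $0 \to \homcomplex(Z,F) \to \homcomplex(Y,F) \to \homcomplex(X,F) \to 0$ of complexes of abelian groups, whose homology long exact sequence gives the two-out-of-three property. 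This $\homcomplex$-trick is the one place where care is needed: a priori $\class{W}_2 = \leftperp{\class{F}_2}$ is only obviously closed under two-out-of-three for degreewise split short exact sequences, and it is precisely the fact that $F$ has $G$-injective components that upgrades this to the full degreewise $G$-exact structure. Everything else is bookkeeping.

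Finally, to see that $(\class{W}_2,\class{F}_2)$ is an injective cotorsion pair I must check that $\class{W}_2 \cap \class{F}_2$ equals the class of injective objects of $\cha{G}_G$. By Lemma~\ref{lemma-complexes that are injective} those are the contractible complexes of $G$-injectives, and such a complex is $G$-acyclic by Lemma~\ref{lemma-properties G-exact} (so it lies in $\class{F}_2$), while $\homcomplex$ of a contractible complex is contractible (so it lies in $\class{W}_2$). For the reverse containment I would run the argument of \cite[Proposition~3.3]{bravo-gillespie-hovey}: given $X \in \class{W}_2 \cap \class{F}_2$, embed it in an injective complex $I$ via a $G$-exact sequence $0 \to X \to I \to I/X \to 0$ (using that $\cha{G}_G$ has enough injectives, by Proposition~\ref{prop-G-exact categories have enough injectives} and Lemma~\ref{lemma-complexes that are injective}); thickness of $\class{W}_2$ forces $I/X \in \class{W}_2 = \leftperp{\class{F}_2}$, so since $X \in \class{F}_2$ the sequence splits and $X$ is a direct summand of $I$, hence injective. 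Then $(\class{W}_2,\class{F}_2)$ is an injective cotorsion pair and the main result of \cite{hovey} gives the asserted injective model structure.
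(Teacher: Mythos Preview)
Your proof is correct and follows essentially the same approach as the paper's: cogenerate by the disks $D^n(S)$ together with spheres on the generator(s), identify $\class{F}_2$ via Lemma~\ref{lemma-disk adjunctions} and the $\Ext^1_{dw}$ computation, then verify thickness and the injective condition by the $\homcomplex(-,F)$ argument exactly as in Proposition~\ref{prop-G-INJ model struc}. The only cosmetic difference is that the paper uses the single sphere $S^n(G)$ while you use the spheres $S^n(G_i)$, but by Lemma~\ref{lemma-properties G-exact}(1) these yield the same right orthogonal, so this is immaterial.
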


\begin{proof}
Take $\class{S}_1$ from the proof of Proposition~\ref{prop-G-INJ model struc} and let $\class{S}_2 = \class{S}_1 \cup \{S^n(G)\}$. We claim that $\rightperp{\class{S}_2} = \class{F}_2$ in $\cha{G}_G$. Indeed if $X \in \rightperp{\class{S}_2}$ then $X$ is a complex of $G$-injectives for which $0 = \GExt^1_{\cha{G}}(S^n(G),X) = \Ext^1_{dw}(S^n(G),X) = H_{n-1}\homcomplex(S^0(G),X) = H_{n-1}\Hom_{\cat{G}}(G,X)$. So $X$ is $G$-acyclic. Conversely, if $X$ is $G$-acyclic with $G$-injective components then $X \in \rightperp{\class{S}_2}$. So we get a complete cotorsion pair by again applying Proposition~\ref{prop-cotorsion pairs in the G-exact cat} and Remark~\ref{remark on cotorsion pairs of complexes} which follows it. The fact that $\class{W}_2$ is thick and contains the $G$-injective complexes follows just like in Proposition~\ref{prop-G-INJ model struc}.
\end{proof}

\begin{theorem}[Krause's recollement for $G$-derived categories]\label{them-krause recollement for G-derived}
Assume each $G_i$ is finitely presented.
Let $\class{D}(G)$ denote the $G$-derived category. Let $K_G(Inj)$ denote the homotopy category of all complexes of $G$-injectives. Let $K_{G\text{-ac}}(Inj)$ denote the homotopy category of all $G$-acyclic complexes of $G$-injectives. Then there is a recollement
\[
\xy
(-30,0)*+{K_{G\textnormal{-ac}}(Inj)};
(0,0)*+{K_G(Inj)};
(26,0)*+{\class{D}(G)};
{(-19,0) \ar (-10,0)};
{(-10,0) \ar@<0.5em> (-19,0)};
{(-10,0) \ar@<-0.5em> (-19,0)};
{(10,0) \ar (19,0)};
{(19,0) \ar@<0.5em> (10,0)};
{(19,0) \ar@<-0.5em> (10,0)};
\endxy
.\]
\end{theorem}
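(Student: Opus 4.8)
The plan is to deduce this recollement from the machinery of injective cotorsion pairs established in Propositions~\ref{prop-G-INJ model struc} and~\ref{prop-G-exact G-INJ model struc}, exactly as in Krause's original argument but carried out in the exact category $\cha{G}_G$. First I would invoke the general principle (from~\cite{gillespie-recoll} and~\cite{gillespie-recoll2}) that a pair of compatible injective cotorsion pairs produces a recollement of homotopy categories. Concretely, the two injective model structures $(\class{W}_1,\class{F}_1)$ and $(\class{W}_2,\class{F}_2)$ on $\cha{G}_G$ have $\class{W}_2 = \class{W}_1 \cap \{\text{$G$-acyclic complexes}\}$ — indeed $\class{F}_2 \subseteq \class{F}_1$ since $\class{S}_1 \subseteq \class{S}_2$, and the class $\class{W}_1$ of ``$\homcomplex(-,F_1)$-acyclic'' complexes together with the $G$-acyclic complexes $\class{W}$ intersect to give $\class{W}_2$. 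Taking fibrant replacements in these model structures and passing to homotopy categories, one gets $\Ho(\class{F}_1) \simeq K_G(Inj)$ (since every complex of $G$-injectives is ``dw'' in the relevant sense and the homotopy relation is ordinary chain homotopy), and the first model structure's homotopy category is $\class{D}(G)$ itself (its trivial objects being the $G$-acyclic complexes, matching Corollary~\ref{cor-injective model for G-derived}). The ``Bousfield localization'' picture then identifies the kernel as $K_{G\text{-ac}}(Inj) = \Ho(\class{F}_2)$.

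The key steps, in order, are: (1) record that $K_G(Inj)$, $K_{G\text{-ac}}(Inj)$, and $\class{D}(G)$ are each the homotopy category of an exact model structure on $\cha{G}_G$ — namely $(\class{W}_1,\class{F}_1)$, $(\class{W}_2,\class{F}_2)$, and the injective model structure of Corollary~\ref{cor-injective model for G-derived} respectively (using that complexes of $G$-injectives form a Frobenius-like subcategory so that $\Ho$ is the stable category, which is the chain-homotopy category); (2) verify the inclusions of cotorsion pairs that make these three model structures fit into a localization sequence: $\class{F}_2 \subseteq \class{F}_1$ and $\class{W}_2 = \class{W} \cap \class{W}_1$ where $\class{W}$ is the class of $G$-acyclic complexes; (3) apply the general recollement theorem for injective cotorsion pairs from~\cite{gillespie-recoll2} (this is the same template used to prove Theorem~\ref{them-verdier recollement for G-derived}), which takes such a compatible pair and outputs the six functors; (4) identify the outer terms of the abstract recollement with the named homotopy categories, and note the functor from $K_G(Inj) \to \class{D}(G)$ is the obvious one induced by the identity on $\cha{G}$, with its adjoints given by fibrant/cofibrant replacement. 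The essential input that makes everything work is that the $G$-injective model structure of Corollary~\ref{cor-injective model for G-derived} and the $G$-projective model structure of Corollary~\ref{cor-projective model for G-derived} share the same weak equivalences (the $G$-homology isomorphisms) and the same trivial objects ($G$-acyclic complexes) — the ``balanced'' phenomenon remarked on before Theorem~\ref{them-A}.

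The main obstacle I anticipate is step (4), the bookkeeping that the homotopy category $\Ho(\class{F}_1)$ of the first injective model structure really is the chain-homotopy category $K_G(Inj)$ of \emph{all} complexes of $G$-injectives, rather than some smaller subcategory — one has to check that $\class{F}_1$ is closed under the relevant operations and that the stable category computation gives ordinary homotopy, which ultimately rests on $G$-injective complexes being injective objects relative to the degreewise-split structure, hence on Lemma~\ref{lemma-complexes that are injective}. Similarly one must confirm $\Ho(\class{F}_2) = K_{G\text{-ac}}(Inj)$. A secondary point requiring care is checking that the composite localization $K_G(Inj) \to \class{D}(G)$ indeed annihilates exactly $K_{G\text{-ac}}(Inj)$, i.e. that a complex of $G$-injectives is $G$-acyclic iff it is zero in $\class{D}(G)$ — but this is immediate from the characterization of weak equivalences in Corollary~\ref{cor-injective model for G-derived}(5). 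Once these identifications are in place, the recollement is a formal consequence of the cotorsion-pair theory, so I would keep the proof short by citing Theorem~\ref{them-injective cotorsion pair for G-derived}, Propositions~\ref{prop-G-INJ model struc} and~\ref{prop-G-exact G-INJ model struc}, and the general recollement result of~\cite{gillespie-recoll2}, and then simply spelling out the functors.
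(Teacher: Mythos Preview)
Your overall strategy matches the paper's: assemble the three injective cotorsion pairs $(\class{W}_1,\class{F}_1)$ from Proposition~\ref{prop-G-INJ model struc}, $(\class{W}_2,\class{F}_2)$ from Proposition~\ref{prop-G-exact G-INJ model struc}, and $(\class{W}_3,\class{F}_3)=(\class{W},\class{I})$ from Theorem~\ref{them-injective cotorsion pair for G-derived}, then feed them into the general recollement machine of~\cite{gillespie-recoll2}. The paper's proof is in fact just four lines doing exactly this.

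However, the compatibility condition you write in step~(2) is wrong. You claim $\class{W}_2 = \class{W}\cap\class{W}_1$, but since $\class{F}_2\subseteq\class{F}_1$ one has $\class{W}_1\subseteq\class{W}_2$, so $\class{W}\cap\class{W}_1\subseteq\class{W}_1\subseteq\class{W}_2$ automatically, while the reverse inclusion would force $\class{W}_2\subseteq\class{W}_1$, which is false. The condition actually required by~\cite[Theorem~3.4]{gillespie-recoll2} lives on the \emph{fibrant} side: one needs $\class{F}_2,\class{F}_3\subseteq\class{F}_1$ together with $\class{W}_3\cap\class{F}_1=\class{F}_2$. The containment $\class{F}_3=\class{I}\subseteq\class{F}_1$ holds because every $D^n(A)$ is $G$-acyclic, hence in $\class{W}$, so $\GExt^1_{\cat{G}}(A,I_n)\cong\GExt^1_{\cha{G}}(D^n(A),I)=0$ for $I\in\class{I}$; and $\class{W}\cap\class{F}_1=\class{F}_2$ is a tautology. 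You also mislabel once which model structure has homotopy category $\class{D}(G)$---it is the third one $(\class{W},\class{I})$, not the first. These are bookkeeping slips rather than a wrong approach; once the correct hypothesis of~\cite[Theorem~3.4]{gillespie-recoll2} is verified, the identifications you worried about in step~(4) come for free from that theorem and require no further argument.
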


\begin{proof}
Take $(\class{W}_1,\class{F}_1)$ to be the injective cotorsion pair from Proposition~\ref{prop-G-INJ model struc}.
Take $(\class{W}_2,\class{F}_2)$ to be the injective cotorsion pair from Proposition~\ref{prop-G-exact G-INJ model struc}.
Take $(\class{W}_3,\class{F}_3) = (\class{W},\class{I})$ to be the semi-$G$-injective cotorsion pair from Theorem~\ref{them-injective cotorsion pair for G-derived}.
Since $\class{F}_2,\class{F}_3 \subseteq \class{F}_1$ and $\class{W}_3 \cap \class{F}_1 = \class{F}_2$ the result is automatic from~\cite[Theorem~3.4]{gillespie-recoll2}.
\end{proof}

\begin{theorem}[Verdier localization recollement for $G$-derived categories]\label{them-verdier recollement for G-derived}
Assume each $G_i$ is finitely presented. Let $\class{D}(G)$ denote the $G$-derived category. Let $K(\cat{G})$ denote the homotopy category of all chain complexes and let $K_{G\textnormal{-ac}}(\cat{G})$ denote the subcategory of all $G$-acyclic complexes. Then there is a recollement
\[
\begin{tikzpicture}[node distance=3.5cm]
\node (A) {$K_{G\textnormal{-ac}}(\cat{G})$};
\node (B) [right of=A] {$K(\cat{G})$};
\node (C) [right of=B] {$\class{D}(G)$};
\draw[<-,bend left=40] (A.20) to node[above]{\small E$(K\class{P},\class{W})$} (B.160);
\draw[->] (A) to node[above]{\small $I$} (B);
\draw[<-,bend right=40] (A.340) to node [below]{\small C$(\class{W},K\class{I})$} (B.200);
\draw[<-,bend left] (B.20) to node[above]{\small $\lambda = \text{C}(K\class{P},\class{W})$} (C.160);
\draw[->] (B) to node[above]{\small Q} (C);
\draw[<-,bend right] (B.340) to node [below]{\small $\rho = \text{E}(\class{W},K\class{I})$} (C.200);
\end{tikzpicture}
\]
Here,  $\class{W}$ is the class of $G$-acyclic complexes, and the complexes in $K\class{P}$ are the $G$-analog of Spaltenstein's K-projective complexes. The functor $\text{C}(K\class{P},\class{W})$ is the functor taking $X$ to its $K\class{P}$-precover since $(K\class{P},\class{W})$ turns out to be a complete cotorsion pair in $\cha{G}_{dw}$. Similarly $K\class{I}$ is analogous to the class of K-injective complexes and $\text{E}(\class{W},K\class{I})$ is the functor taking $X$ to its $K\class{I}$-preenvelope.
\end{theorem}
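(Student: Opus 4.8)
The plan is to realize all three categories in the recollement as stable/homotopy categories attached to cotorsion pairs on the \emph{single} exact category $\cha{G}_{dw}$ of all chain complexes with the degreewise split exact structure, and then to apply~\cite[Theorem~3.4]{gillespie-recoll2} exactly as in the proof of Theorem~\ref{them-krause recollement for G-derived}. Recall that $\cha{G}_{dw}$ is a Frobenius exact category whose projective-injective objects are the contractible complexes and whose stable category is $K(\cat{G})$. Let $\class{W}$ be the class of $G$-acyclic complexes. A degreewise split short exact sequence is in particular degreewise $G$-exact (split exact sequences are $G$-exact, Proposition~\ref{prop-G-exact sequences-properties}), so by Lemma~\ref{lemma-properties G-exact} the class $\class{W}$ is thick in $\cha{G}_{dw}$; since it contains all contractible complexes it is itself a Frobenius exact category, with stable category $K_{G\textnormal{-ac}}(\cat{G})$. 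Orthogonals and extension groups of complexes below are all taken with respect to $\Ext^1_{dw}$, and by Lemma~\ref{lemma-homcomplex-basic-lemma} a complex $X$ lies in $\leftperp{\class{W}}$ exactly when $\homcomplex(X,W)$ is exact for every $W \in \class{W}$; write $K\class{P} := \leftperp{\class{W}}$ for this class of \emph{$G$-$K$-projective} complexes, and dually $K\class{I} := \rightperp{\class{W}}$ for the \emph{$G$-$K$-injective} complexes. Note that the semi-$G$-projective complexes $\class{P}$ satisfy $\class{P} \subseteq K\class{P}$, and the semi-$G$-injective complexes $\class{I}$ satisfy $\class{I} \subseteq K\class{I}$, because $\Ext^1_{dw} \subseteq \GExt^1_{\cha{G}}$.

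The heart of the argument is to show that $(K\class{P},\class{W})$ and $(\class{W},K\class{I})$ are complete cotorsion pairs in $\cha{G}_{dw}$, and here the projective and injective model structures of Sections~\ref{sec-the G-derived category of a locally finitely presented Grothendieck category} and~\ref{sec-injective model struc for G-derived} are used. I would produce the required special (pre)covers by a ``contractible summand'' device. For $(K\class{P},\class{W})$: given a complex $X$, take a cofibrant replacement $q : Q \xrightarrow{\sim} X$ in the $G$-projective model structure of Corollary~\ref{cor-projective model for G-derived}, so $Q$ is semi-$G$-projective (hence in $K\class{P}$) and $q$ is a $G$-homology isomorphism, and choose a degreewise split epimorphism $c : C \twoheadrightarrow X$ from a contractible complex $C$. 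The combined map $(q,c) : Q \oplus C \twoheadrightarrow X$ is again a degreewise split epimorphism with $Q \oplus C \in K\class{P}$, and since $c$ is null-homotopic its kernel is homotopy equivalent to $\Sigma^{-1}\operatorname{Cone}(q)$, which is $G$-acyclic because $q$ is a $G$-homology isomorphism. This yields a special $K\class{P}$-precover $0 \to W \to Q \oplus C \to X \to 0$ with $W \in \class{W}$; special $\class{W}$-preenvelopes then follow by Salce's Lemma (valid since $\cha{G}_{dw}$ has enough projectives and injectives), and the standard splitting argument, using thickness of $\class{W}$ and of $K_{G\textnormal{-ac}}(\cat{G})$ inside $K(\cat{G})$, gives $\rightperp{K\class{P}} = \class{W}$. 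Since $K\class{P} \cap \class{W}$ is exactly the class of contractible complexes (if $X$ is $G$-acyclic and $G$-$K$-projective, apply $\homcomplex(-,W)$ with $W = X$ to see $1_X$ is null-homotopic), $(K\class{P},\class{W})$ is an injective (equivalently, here, projective) cotorsion pair. The case of $(\class{W},K\class{I})$ is strictly dual: starting from a fibrant replacement $\sigma : X \xrightarrow{\sim} R$ in the $G$-injective model structure of Corollary~\ref{cor-injective model for G-derived} --- so $R$ is semi-$G$-injective, hence in $K\class{I}$, and $\sigma$ is a $G$-homology isomorphism --- together with a degreewise split monomorphism $X \rightarrowtail C'$ into a contractible complex, one obtains special $K\class{I}$-preenvelopes and hence (again by Salce's Lemma) the complete injective cotorsion pair $(\class{W},K\class{I})$. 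I expect this to be the main obstacle: in essence one must upgrade the $G$-exact (co)resolutions supplied by Theorems~\ref{them-projective model for G-derived} and~\ref{them-injective cotorsion pair for G-derived} to \emph{degreewise split} ones, and the contractible-summand trick is precisely what achieves this --- which is why the injective model structure of Section~\ref{sec-injective model struc for G-derived} is needed for the recollement.

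To finish, assemble in $\cha{G}_{dw}$ the three injective cotorsion pairs: $(\class{W}_1,\class{F}_1)$, with $\class{F}_1$ all complexes and $\class{W}_1$ the contractible ones (its associated model structure has the chain homotopy equivalences as weak equivalences, so its stable category is $K(\cat{G})$); $(\class{W}_2,\class{F}_2) = (K\class{P},\class{W})$ (whose right class $\class{F}_2 = \class{W}$ has stable category $K_{G\textnormal{-ac}}(\cat{G})$); and $(\class{W}_3,\class{F}_3) = (\class{W},K\class{I})$. The abelian model structure determined by $(\class{W}_3,\class{F}_3)$ on $\cha{G}_{dw}$ has, by a routine check like those in Corollaries~\ref{cor-projective model for G-derived} and~\ref{cor-injective model for G-derived}, the $G$-homology isomorphisms as its weak equivalences; hence its homotopy category is the localization of $\cha{G}$ at the $G$-homology isomorphisms, which is the homotopy category of the $G$-projective model structure, namely $\class{D}(G)$. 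Since $\class{F}_2 = \class{W} \subseteq \class{F}_1$, $\class{F}_3 = K\class{I} \subseteq \class{F}_1$, and $\class{W}_3 \cap \class{F}_1 = \class{W} = \class{F}_2$, Theorem~3.4 of~\cite{gillespie-recoll2} --- applied exactly as in the proof of Theorem~\ref{them-krause recollement for G-derived} --- yields the recollement $K_{G\textnormal{-ac}}(\cat{G}) \to K(\cat{G}) \to \class{D}(G)$, with the six functors identified as the (pre)cover and (pre)envelope functors of the cotorsion pairs $(K\class{P},\class{W})$ and $(\class{W},K\class{I})$ described in the statement.
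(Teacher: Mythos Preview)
Your approach is correct and essentially the same as the paper's, differing only in how much of the machinery from~\cite{gillespie-recoll2} you unpack by hand. The paper simply applies~\cite[Theorem~6.3]{gillespie-recoll2} to transfer the injective cotorsion pair $(\class{W},\class{I})$ on $\cha{G}_G$ to $(\class{W},K\class{I})$ on $\cha{G}_{dw}$, and dually for $(\class{P},\class{W}) \mapsto (K\class{P},\class{W})$; your ``contractible summand'' argument is exactly the content of that transfer theorem. The paper then finishes by observing that $(K\class{P},\class{W},K\class{I})$ is a \emph{localizing cotorsion triple} and invoking~\cite[Corollary~4.5]{gillespie-recoll2}, whereas you instead repackage the same data as three injective cotorsion pairs on $\cha{G}_{dw}$ and invoke~\cite[Theorem~3.4]{gillespie-recoll2} (mirroring the proof of Theorem~\ref{them-krause recollement for G-derived}). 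Both routes yield the identical recollement with the same functors; your version has the virtue of being more self-contained, while the paper's is shorter by deferring the transfer step and the triple-to-recollement step to the companion paper.
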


\begin{proof}
The basic idea is that the existence of the $G$-projective model $(\class{P},\class{W})$ of Section~\ref{sec-the G-derived category of a locally finitely presented Grothendieck category} provides a left adjoint to the inclusion $K_{G\textnormal{-ac}}(\cat{G}) \xrightarrow{} K(\cat{G})$, and in fact a colocalization sequence $K_{G\textnormal{-ac}}(\cat{G}) \xrightarrow{} K(\cat{G}) \xrightarrow{} \class{D}(G)$. On the other hand, the existence of the $G$-injective model $(\class{W},\class{I})$ of Section~\ref{sec-injective model struc for G-derived} provides a right adjoint to the inclusion $K_{G\textnormal{-ac}}(\cat{G}) \xrightarrow{} K(\cat{G})$, and in fact a localization sequence $K_{G\textnormal{-ac}}(\cat{G}) \xrightarrow{} K(\cat{G}) \xrightarrow{} \class{D}(G)$. Together this is a recollement. The formalization in terms of model structures follows immediately from work in~\cite[Section~6]{gillespie-recoll2}. The theory there is all written in terms of weakly idempotent complete exact categories, and so applies to our current setting. In full detail, we apply~\cite[Theorem~6.3]{gillespie-recoll2} to the $G$-injective model structure $(\class{W},\class{I})$ to obtain a Quillen equivalent model structure $(\class{W},K\class{I})$ in the exact category $\cha{G}_{dw}$ of chain complexes with degreewise split short exact sequences. The complexes in $K\class{I}$ are the $G$-analog of Spaltenstein's K-injective complexes and in fact are, by~\cite[Proposition~6.4]{gillespie-recoll2}, precisely the complexes that are chain homotopy equivalent to a semi-$G$-injective complex. The dual of~\cite[Theorem~6.3]{gillespie-recoll2} applied to the $G$-projective model structure $(\class{P},\class{W})$ gives us a similar model $(K\class{P},\class{W})$. All together $(K\class{P},\class{W},K\class{I})$ is \emph{localizing cotorsion triple} in the sense of~\cite[Section~4.1]{gillespie-recoll2} and so by~\cite[Corollary~4.5]{gillespie-recoll2} we obtain the recollement.
\end{proof}

\appendix

\section{$\lambda$-purity in Grothendieck categories}\label{appendix-lambda pure}  Every Grothendieck category $\cat{G}$ is locally presentable. This means there exists a regular cardinal $\lambda$ and a set $\class{S}$ of $\lambda$-presented objects such that every object of $\cat{G}$ is a $\lambda$-directed colimit of objects of $\class{S}$. In this case we say $\cat{G}$ is locally $\lambda$-presentable and it is true that for any regular cardinal $\lambda' > \lambda$, we have $\cat{G}$ is locally $\lambda'$-presentable as well. See~\cite[Theorem~1.20 and the Remark]{adamek-rosicky}.

Now following~\cite{adamek-rosicky}, a morphism $f$ is called \emph{$\lambda$-pure} if for each commutative diagram
$$\begin{CD}
A'       @>f'>>     B'       \\
@V u VV             @VV v V               \\
A       @>f>>  B \\
\end{CD}$$
with $A',B'$ $\lambda$-presented there is a map $t$ such that $u=tf'$. Assuming the category is locally $\lambda$-presentable we have from~\cite[Proposition~2.29]{adamek-rosicky} that a $\lambda$-pure morphism must be a monomorphism. In fact, they are characterized in~\cite[Proposition~2.30 and its Corollary]{adamek-rosicky} as being precisely the $\lambda$-directed colimits (in the category of morphisms) of split monomorphisms. Since Grothendieck categories are abelian we are lead naturally to speak instead of $\lambda$-pure short exact sequences, which we now characterize.

\begin{proposition}[$\lambda$-purity in Grothendieck categories]\label{prop-lambda pure short exact sequences}
Let $\cat{G}$ be a locally $\lambda$-presentable Grothendieck category and let $\class{E} : 0 \xrightarrow{} A \xrightarrow{f} B \xrightarrow{g} C \xrightarrow{} 0$ be a short exact sequence. Then the following are equivalent and characterize what we mean by saying \emph{$\class{E}$ is a $\lambda$-pure short exact sequence}.
\begin{enumerate}
\item $f$ is a $\lambda$-pure morphism.

\item $\Hom_{\cat{G}}(L,\class{E})$ is a short exact sequence of abelian groups for any $\lambda$-presented object $L$.

\item $\class{E}$ is a $\lambda$-directed limit of split short exact sequences $$\class{E}_i : 0 \xrightarrow{} A_i \xrightarrow{} B_i \xrightarrow{} C_i \xrightarrow{} 0 \ \ (i \in I).$$
\end{enumerate}

\end{proposition}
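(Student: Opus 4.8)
The plan is to prove the cycle of implications $(1)\Rightarrow(3)\Rightarrow(2)\Rightarrow(1)$, using the characterization of $\lambda$-pure morphisms from~\cite[Proposition~2.30 and its Corollary]{adamek-rosicky} for the direction $(1)\Rightarrow(3)$ and otherwise only the exactness of $\lambda$-directed colimits (a special case of Grothendieck's (AB5)) together with the fact that $\Hom_{\cat{G}}(L,-)$ preserves $\lambda$-directed colimits when $L$ is $\lambda$-presented. I would also remark at the outset that statement~(2) has content only in the surjectivity of $\Hom_{\cat{G}}(L,B)\xrightarrow{g_*}\Hom_{\cat{G}}(L,C)$, since $\Hom_{\cat{G}}(L,-)$ is automatically left exact.

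For $(1)\Rightarrow(3)$: by~\cite{adamek-rosicky}, since $\cat{G}$ is locally $\lambda$-presentable, the $\lambda$-pure morphism $f$ is a $\lambda$-directed colimit $f=\colim_i f_i$, computed in the arrow category, of split monomorphisms $f_i\colon A_i\rightarrowtail B_i$. Completing each $f_i$ to its (necessarily split) short exact sequence $\class{E}_i\colon 0\to A_i\to B_i\to \cok f_i\to 0$ and passing to the $\lambda$-directed colimit, exactness of $\lambda$-directed colimits yields a short exact sequence $0\to\colim_i A_i\to\colim_i B_i\to\colim_i\cok f_i\to 0$; since cokernels commute with colimits this is $0\to A\to B\to\cok f\to 0$, i.e.\ $\class{E}$ itself. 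For $(3)\Rightarrow(2)$: write $\class{E}=\colim_i\class{E}_i$ with each $\class{E}_i$ split; for $\lambda$-presented $L$ the functor $\Hom_{\cat{G}}(L,-)$ preserves this $\lambda$-directed colimit and carries each $\class{E}_i$ to a split short exact sequence of abelian groups, and a $\lambda$-directed colimit of short exact sequences of abelian groups is again short exact, so $\Hom_{\cat{G}}(L,\class{E})\cong\colim_i\Hom_{\cat{G}}(L,\class{E}_i)$ is short exact.

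The substance is $(2)\Rightarrow(1)$. Given a commutative square as in the definition of a $\lambda$-pure morphism, with $A',B'$ $\lambda$-presented and $vf'=fu$, I must produce $t\colon B'\to A$ with $tf'=u$. Put $C'=\cok f'$; since $\im f'$ is a quotient of the $\lambda$-presented object $A'$ it is $\lambda$-generated, so $C'$ is a quotient of the $\lambda$-presented $B'$ by a $\lambda$-generated subobject and is therefore itself $\lambda$-presented. The composite $gv\colon B'\to C$ annihilates $\im f'$ (because $gvf'=gfu=0$), hence factors as $gv=\bar v\pi'$ where $\pi'\colon B'\twoheadrightarrow C'$ is the projection and $\bar v\colon C'\to C$. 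Applying hypothesis~(2) with $L=C'$ gives a map $h\colon C'\to B$ with $gh=\bar v$. Then $g(v-h\pi')=gv-\bar v\pi'=0$, so $v-h\pi'$ factors through $f=\ker g$, say $v-h\pi'=ft$ with $t\colon B'\to A$; finally $f(tf')=(v-h\pi')f'=vf'=fu$, and $f$ being a monomorphism forces $tf'=u$, as required.

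The main obstacle will be the $(2)\Rightarrow(1)$ step, and within it the only genuinely clever point is recognizing that the correct test object to feed into hypothesis~(2) is the cokernel $C'=\cok f'$ rather than $B'$ itself — once this is done the rest is a short diagram chase in the abelian category $\cat{G}$. The supporting facts I would record carefully are that in a locally $\lambda$-presentable category a quotient of a $\lambda$-presented object by a $\lambda$-generated subobject is $\lambda$-presented (so that $C'$ really is an admissible test object), that $\lambda$-directed colimits are exact in $\cat{G}$ and in $\mathbf{Ab}$, and that $\Hom_{\cat{G}}(L,-)$ preserves $\lambda$-directed colimits for $\lambda$-presented $L$; all of these are standard and available from~\cite{adamek-rosicky}.
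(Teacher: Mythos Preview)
Your argument is correct. The implications $(1)\Rightarrow(3)$ and $(3)\Rightarrow(2)$ match the paper's proof essentially verbatim. The difference lies in how you close the cycle: the paper proves $(2)\Rightarrow(3)$ rather than $(2)\Rightarrow(1)$. Its argument writes $C=\varinjlim C_i$ as a $\lambda$-directed colimit of $\lambda$-presented objects, pulls back $\class{E}$ along each $C_i\to C$ to obtain sequences $\class{E}_i$, and then uses hypothesis~(2) with $L=C_i$ to lift $C_i\to C$ through $g$, which splits each $\class{E}_i$; finally one checks $\class{E}\cong\varinjlim\class{E}_i$. Your $(2)\Rightarrow(1)$ is a direct verification of the defining lifting property and is arguably more elementary: it avoids invoking the full $(3)\Rightarrow(1)$ direction from~\cite{adamek-rosicky} to close the loop, and the diagram chase is short. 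The paper's route, on the other hand, has the virtue of exhibiting the $\lambda$-directed system of split sequences concretely as pullbacks of $\class{E}$, which is sometimes useful in applications. One small simplification available to you: rather than arguing via $\im f'$ being $\lambda$-generated, you can observe directly that $C'=\cok f'$ is a finite colimit of the $\lambda$-presented objects $A',B'$ and hence $\lambda$-presented, since $\lambda$-presented objects are closed under $\lambda$-small (in particular finite) colimits in any locally $\lambda$-presentable category.
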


\begin{proof}
As already pointed out above, we have from~\cite[Proposition~2.30 and Corollary]{adamek-rosicky} that the $\lambda$-pure morphisms are precisely the $\lambda$-directed colimits of split monomorphisms. In particular, if $f$ is a $\lambda$-pure morphism, we get that the short exact sequence
$$\class{E} : 0 \xrightarrow{} A \xrightarrow{f} B \xrightarrow{g} C \xrightarrow{} 0$$ must be a $\lambda$-directed colimit of split short exact sequences
$$\class{E}_i : 0 \xrightarrow{} A_i \xrightarrow{} B_i \xrightarrow{} C_i \xrightarrow{} 0.$$ So (1) if and only if (3). But if (3) holds, then we clearly have that each $\Hom_{\class{G}}(L,\class{E}_i)$ is exact for any $L$. If $L$ is $\lambda$-presented then $\Hom_{\class{G}}(L,\class{E}) \cong \varinjlim \Hom_{\class{G}}(L,\class{E}_i)$ is also exact. So (3) implies (2).

Now we show (2) implies (3). Using that $\cat{G}$ is locally $\lambda$-presentable, write $C = \varinjlim C_i$ as a $\lambda$-directed colimit of $\lambda$-presented $C_i$. For each $\gamma_i : C_i \xrightarrow{} C$, form the pullback
$$\begin{CD}
\class{E}_i : \ \ \ 0 @>>> A       @>>>     B_i  @>>> C_i @>>> 0     \\
@. @|       @VVV      @VV \gamma_i V    @.           \\
\class{E} : \ \ \ 0 @>>> A       @>f>>  B @>g>> C @>>> 0 \\
\end{CD}$$
If (2) holds, then $\gamma_i$ lifts over $g$. This implies that $\class{E}_i$ splits. One can check that $\class{E} \cong \varinjlim \class{E}_i$.
\end{proof}

\begin{proposition}\label{prop-purity properties}
Let $\cat{G}$ be a locally $\lambda$-presentable Grothendieck category and $A \subseteq B \subseteq C$.
\begin{enumerate}
\item If $A \subseteq B$ is $\lambda$-pure and $B \subseteq C$ is $\lambda$-pure then $A \subseteq C$ is $\lambda$-pure.
\item If $A \subseteq C$ is $\lambda$-pure then $A \subseteq B$ is $\lambda$-pure.
\item If $A \subseteq C$ is $\lambda$-pure and $B/A \subseteq C/A$ is $\lambda$-pure, then $B \subseteq C$ is $\lambda$-pure.
\item $\lambda$-pure monomorphisms are closed under $\lambda$-directed colimits.
\end{enumerate}
\end{proposition}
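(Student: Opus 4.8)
The plan is to reduce all four parts to the characterization of $\lambda$-purity given by the equivalence of (1) and (2) in Proposition~\ref{prop-lambda pure short exact sequences}: a monomorphism $A' \hookrightarrow C'$ is $\lambda$-pure if and only if, for every $\lambda$-presented object $L$, the induced map $\Hom_{\cat{G}}(L,C') \to \Hom_{\cat{G}}(L,C'/A')$ is surjective (equivalently, $\Hom_{\cat{G}}(L,-)$ carries $0 \to A' \to C' \to C'/A' \to 0$ to a short exact sequence of abelian groups, since $\Hom_{\cat{G}}(L,-)$ is always left exact). Each of (1)--(3) then becomes a short diagram chase in $\mathbf{Ab}$ whose only non-formal inputs are left-exactness of $\Hom_{\cat{G}}(L,-)$ and the fact that $\Hom_{\cat{G}}(L,-)$ preserves pullbacks; part (4) I would simply quote.

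For (3), fix a $\lambda$-presented $L$. Using the canonical isomorphism $C/B \cong (C/A)/(B/A)$, the map $\Hom_{\cat{G}}(L,C) \to \Hom_{\cat{G}}(L,C/B)$ factors as $\Hom_{\cat{G}}(L,C) \to \Hom_{\cat{G}}(L,C/A) \to \Hom_{\cat{G}}(L,(C/A)/(B/A))$; the first arrow is surjective because $A \subseteq C$ is $\lambda$-pure, the second because $B/A \subseteq C/A$ is $\lambda$-pure (applied to $0 \to B/A \to C/A \to C/B \to 0$), and a composite of surjections is surjective. For (2), the key observation is that the commutative square with rows $B \hookrightarrow C$ and $B/A \hookrightarrow C/A$ and vertical quotient maps is a pullback, since $B$ is exactly the preimage of $B/A$ under $C \twoheadrightarrow C/A$ (using $A = \ker(C \to C/A) \subseteq B$). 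As $\Hom_{\cat{G}}(L,-)$ preserves pullbacks, $\Hom_{\cat{G}}(L,B) = \Hom_{\cat{G}}(L,C)\times_{\Hom_{\cat{G}}(L,C/A)}\Hom_{\cat{G}}(L,B/A)$; given $\phi\colon L \to B/A$, its image in $\Hom_{\cat{G}}(L,C/A)$ lifts along $\Hom_{\cat{G}}(L,C) \to \Hom_{\cat{G}}(L,C/A)$ because $A \subseteq C$ is $\lambda$-pure, and that lift paired with $\phi$ is the desired lift of $\phi$ to $\Hom_{\cat{G}}(L,B)$. Part (4) requires no new work: by \cite[Proposition~2.30 and its Corollary]{adamek-rosicky} the $\lambda$-pure monomorphisms are precisely the $\lambda$-directed colimits of split monomorphisms, and a $\lambda$-directed colimit of such is again one.

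I expect (1) to be the main obstacle, being the genuine three-step chase and the analogue of the classical fact that a composite of pure submodule inclusions is pure. Given a $\lambda$-presented $L$ and $g\colon L \to C/A$, I would first push $g$ forward to $\bar g\colon L \to C/B$ and lift it to $h\colon L \to C$ using $\lambda$-purity of $B \subseteq C$; then note that $g$ minus the image of $h$ in $C/A$ is annihilated by $C/A \to C/B$, hence factors through $B/A = \ker(C/A \to C/B)$ as some $g'\colon L \to B/A$; then lift $g'$ to $k\colon L \to B$ using $\lambda$-purity of $A \subseteq B$; and finally verify that $h$ plus the image of $k$ in $C$ lifts $g$ along $C \to C/A$. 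Alternatively one could note that the $\lambda$-pure short exact sequences form an exact structure on the (weakly idempotent complete) Grothendieck category $\cat{G}$, whence (1) is closure of admissible monomorphisms under composition and (2) is \cite[Proposition~7.6]{buhler-exact categories}; but the $\Hom_{\cat{G}}(L,-)$ argument above avoids having to set up that exact structure.
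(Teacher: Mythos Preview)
Your proof is correct. For parts~(3) and~(4) you do exactly what the paper does: the same factorization of $\Hom_{\cat{G}}(L,C) \to \Hom_{\cat{G}}(L,C/B)$ through $\Hom_{\cat{G}}(L,C/A)$ for~(3), and the same citation of \cite[Proposition~2.30]{adamek-rosicky} for~(4).

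For parts~(1) and~(2) you take a different route. The paper dismisses both as ``easy from the definition of $\lambda$-pure via the commutative diagram,'' meaning the original lifting definition rather than the $\Hom_{\cat{G}}(L,-)$ characterization. Concretely, for~(2) one simply postcomposes a test square over $A \hookrightarrow B$ with $B \hookrightarrow C$ and applies $\lambda$-purity of $A \subseteq C$ once; for~(1) one first applies $\lambda$-purity of $B \subseteq C$ to the square with vertical maps $A' \to A \to B$ and $B' \to C$ to get a factorization through $B$, then applies $\lambda$-purity of $A \subseteq B$. Each is a two-line argument. Your arguments via surjectivity of $\Hom_{\cat{G}}(L,-)$ on quotients are also valid---the pullback description of $B$ in~(2) is correct, and the three-step chase in~(1) works exactly as you describe---but they are longer than necessary. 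In particular, your expectation that~(1) is ``the main obstacle'' is misplaced: from the diagram definition it is the most immediate of the three transitivity statements. The trade-off is that your approach keeps everything phrased in terms of the characterization from Proposition~\ref{prop-lambda pure short exact sequences}, which is the form used elsewhere in the paper, whereas the paper's approach exploits the fact that the primitive definition is better suited to composition and cancellation.
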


\begin{proof}
(1) and (2) follow easy from the definition of $\lambda$-pure via the commutative diagram. For (3), let $L$ be $\lambda$-presented. All we need to check is that the map $\Hom_{\cat{G}}(L,C) \xrightarrow{} \Hom_{\cat{G}}(L,C/B)$ is an epimorphism. But this is just the composite $$\Hom_{\cat{G}}(L,C) \xrightarrow{} \Hom_{\cat{G}}(L,C/A) \xrightarrow{} \Hom_{\cat{G}}(L,(C/A)/(B/A)) \cong \Hom_{\cat{G}}(L,C/B),$$ and these are epimorphisms by hypothesis. Finally, a proof of (4) appears in~\cite[Proposition~2.30~(1)]{adamek-rosicky}.
\end{proof}

\section{Exact categories vs. proper classes}\label{appendix-proper classes}

We show here that if $\class{A}$ is an abelian category, an exact category $(\class{A},\class{E})$ in the sense of~\cite{quillen-algebraic K-theory} and~\cite{buhler-exact categories} is the same thing as a proper class of short exact sequences in the sense of~\cite[Chapter~XII.4]{homology} and~\cite{hovey}. See also the Historical Notes and Appendix~B of~\cite{buhler-exact categories} for the equivalence to Heller's axioms for an ``abelian class of short exact sequences''.

\begin{proposition}\label{prop-abelian exact categories are proper classes}
Let $\class{A}$ be an abelian category. Then $(\class{A},\class{E})$ is an exact category in the sense of~\cite{quillen-algebraic K-theory} if and only if $\class{E}$ is a proper class of short exact sequences in the sense of~\cite[Chapter~XII.4]{homology}.
\end{proposition}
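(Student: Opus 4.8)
The plan is to show that, once the ambient category $\class{A}$ is abelian, Quillen's exact-category axioms and Mac~Lane's proper-class axioms are simply two names for the same list of conditions, so that the proposition reduces to a translation of terminology. First I would recall both sets of data. Following~\cite{buhler-exact categories}, an exact structure $\class{E}$ on $\class{A}$ is an isomorphism-closed class of kernel--cokernel pairs, with distinguished admissible monomorphisms (the inflations) and admissible epimorphisms (the deflations), subject to: \textbf{[E0]}/\textbf{[E0op]} the identity of the zero object is an admissible epi and an admissible mono; \textbf{[E1]}/\textbf{[E1op]} admissible monos and admissible epis are each closed under composition; \textbf{[E2]}/\textbf{[E2op]} the pushout of an admissible mono along any morphism exists and is an admissible mono, and dually pullbacks of admissible epis exist and are admissible epis. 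Following~\cite[Chapter~XII.4]{homology}, a proper class $\class{E}$ is an isomorphism-closed class of short exact sequences of $\class{A}$ which contains all split short exact sequences, whose proper monomorphisms (those occurring as the kernel of a sequence in $\class{E}$) are closed under composition and stable under pushout, whose proper epimorphisms are closed under composition and stable under pullback, and which satisfies the cancellation clauses ``$gf$ a proper mono $\Rightarrow$ $f$ a proper mono'' and ``$gf$ a proper epi $\Rightarrow$ $g$ a proper epi''. The key preliminary remark is that in an abelian category a short exact sequence $0 \to A \to B \to C \to 0$ is exactly a kernel--cokernel pair, so both definitions refer to the very same objects: an isomorphism-closed class of short exact sequences, together with its induced admissible $=$ proper monos and epis.

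Next I would run the two implications, each essentially by matching the bullets. For ``exact $\Rightarrow$ proper class'': isomorphism-closure is built in; split short exact sequences lie in $\class{E}$ as a standard consequence of \textbf{[E0]}/\textbf{[E0op]} and the composition axioms (see~\cite{buhler-exact categories}); closure of proper monos/epis under composition is \textbf{[E1]}/\textbf{[E1op]}; stability under pushout/pullback is \textbf{[E2]}/\textbf{[E2op]}, and here I would emphasize that in an abelian category the relevant pushouts and pullbacks always exist, so these axioms contribute only their stability content. The remaining cancellation clauses of Mac~Lane are exactly the ``obscure axiom'' and its dual, which hold automatically in any exact category whose underlying additive category is weakly idempotent complete; since an abelian category is idempotent complete this follows from~\cite[Proposition~7.6]{buhler-exact categories} (and the discussion of the obscure axiom there). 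For ``proper class $\Rightarrow$ exact'': the presence of split sequences gives \textbf{[E0]}/\textbf{[E0op]}; the composition clauses give \textbf{[E1]}/\textbf{[E1op]}; the pushout/pullback stability clauses, together with the automatic existence of all pushouts and pullbacks in $\class{A}$, give \textbf{[E2]}/\textbf{[E2op]}; and because each sequence in $\class{E}$ is a short exact sequence, hence a kernel--cokernel pair, and $\class{E}$ is isomorphism-closed, nothing further is required for B\"uhler's list.

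The step I expect to be the main obstacle is purely bookkeeping: reconciling the ``redundant'' axioms on the two sides --- specifically, checking that Mac~Lane's cancellation clauses (and any strong form of the composition axioms he may state) are subsumed by the facts, proved in~\cite{buhler-exact categories}, that the obscure axiom and the ``missing halves'' of such clauses are automatic in a (weakly) idempotent complete category, together with the observation that abelian categories are idempotent complete. With that in hand the written proof can be kept short: state the two axiom lists, invoke the kernel--cokernel pair identification, and cite~\cite{buhler-exact categories} and~\cite[Chapter~XII.4]{homology} for the matching of the remaining clauses; one may add, as the surrounding text already notes, that Appendix~B of~\cite{buhler-exact categories} records the analogous comparison with Heller's axiom system.
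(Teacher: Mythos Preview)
Your treatment of the direction ``exact $\Rightarrow$ proper'' is fine and matches the paper's: the only nontrivial clause is Mac\,Lane's cancellation axiom (P-4) and its dual, and these follow from~\cite[Proposition~7.6]{buhler-exact categories} because abelian categories are weakly idempotent complete.

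The gap is in the converse, and it stems from a misstatement of Mac\,Lane's axioms. In~\cite[Chapter~XII.4]{homology} the proper-class axioms do \emph{not} include ``proper monos are stable under pushout'' and ``proper epis are stable under pullback'' as primitive hypotheses; rather, the axioms are isomorphism-closure, containment of split sequences, closure of proper monos/epis under composition, and the \emph{restricted} cancellation law (P-4): if $gf$ is a proper mono \emph{and $g$ is monic}, then $f$ is a proper mono (dually (P-4${}'$)). Pushout/pullback stability is a theorem Mac\,Lane derives from these axioms, not an axiom itself. So your sentence ``the pushout/pullback stability clauses, together with the automatic existence of all pushouts and pullbacks in $\class{A}$, give \textbf{[E2]}/\textbf{[E2op]}'' is precisely where the work has been skipped.

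Concretely, to derive \textbf{[E2op]}, form the pullback of an $\class{E}$-epi $p:B\to C$ along $f:C'\to C$, obtaining the usual diagram with rows $0\to A\xrightarrow{i'}P\xrightarrow{p'}C'\to 0$ and $0\to A\xrightarrow{i}B\xrightarrow{p}C\to 0$ and vertical map $f':P\to B$. One has $f'i'=i$, an $\class{E}$-mono, and wants to cancel $f'$; but (P-4) only permits this when $f'$ is monic, which it need not be. The paper closes this gap by following the argument inside~\cite[XII.4, Theorem~4.3]{homology}: realise $P$ as $\ker(p\pi_1-f\pi_2:B\oplus C'\to C)$ with inclusion $v$, compute $vi'=i_1i$, and observe that $i_1$ is an $\class{E}$-mono (split sequences lie in $\class{E}$) and compositions of $\class{E}$-monos are $\class{E}$-monos, so $vi'$ is an $\class{E}$-mono with $v$ monic; now the restricted (P-4) applies and yields that $i'$ is an $\class{E}$-mono. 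You need to supply this (or an equivalent) argument; the ``pure bookkeeping'' prediction is too optimistic for this direction.
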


\begin{proof}
Say $(\class{A},\class{E})$ is an exact category. We wish to see that $\class{E}$ is a proper class. The only thing that is not immediate from first definitions or properties of exact categories is Mac\,Lane's axiom (P-4), and the dual (P-4'). But abelian categories are weakly idempotent complete and so these follow from~\cite[Proposition~7.6]{buhler-exact categories} which states: whenever $gf$ is an admissible monomorphism (resp. epimorphism) then $f$ (resp. $g$) is an admissible monomorphism (resp. epimorphism).

On the other hand, say $\class{E}$ is a proper class in $\class{A}$. To see $(\class{A},\class{E})$ is an exact category we just need to check the pullback/pushout axioms. But any, say pullback, exists, and pulling back along an $\class{E}$-epimorphism $p$ yields a diagram:
$$\begin{CD}
0       @>>>    A    @>i'>> P @>p'>> C' @>>> 0  \\
@.            @|   @Vf' VV @VfVV     @.   \\
0       @>>>  A @>i>> B    @>p>> C @>>> 0\\
\end{CD}$$
Since $i$ is an $\class{E}$-monomorphism, so is $i = f'i'$. We wish to ``cancel'' $f'$ to conclude $i'$ is an $\class{E}$-monomorphism. However, axiom (P-4) of~\cite[Chapter~XII.4]{homology} only allows this when $f'$ is monic. But we now remedy this by imitating the argument that can be found within the proof of~\cite[XII.4 Theorem~4.3]{homology}. First, recall that the pullback $(P,f',p')$ can be constructed (see~\cite[XII.4 Theorem~1.1]{homology}) so that $P$ is the kernel in the left exact sequence $0 \xrightarrow{} P \xrightarrow{v} B \oplus C' \xrightarrow{p \pi_1 - f \pi_2} C$ and the maps $f'$ and $p'$ satisfy $f' = \pi_1 v$ and $p' = \pi_2 v$. We see that $$vi' = 1vi' = (i_1\pi_1 + i_2\pi_2)vi' = i_1(\pi_1v)i' + i_2(\pi_2v)i' = i_1f'i' + i_2p'i' = i_1i .$$
Since $i_1$ is an $\class{E}$-monomorphism by (P-2), we see that $i_1i$ is an $\class{E}$-monomorphism by (P-3). So $vi' = i_1i$ is an $\class{E}$-monomorphism, and by (P-4) we may now conclude $i'$ is an $\class{E}$-monomorphism.
\end{proof}

\end{document}